\pdfoutput=1
\documentclass[twoside, preprint]{article}

\usepackage{aistats2027}

\usepackage[utf8]{inputenc}
\usepackage[T1]{fontenc}
\usepackage{amsmath}
\usepackage{amssymb}
\usepackage{amsthm}
\usepackage{amsfonts}
\usepackage{bbm}
\usepackage{enumitem}
\usepackage{array}
\usepackage{natbib}
\usepackage{xcolor}
\colorlet{RED}{red}
\definecolor{viridispurple}{HTML}{440154}
\definecolor{viridisblue}{HTML}{2D708E}
\definecolor{viridisteal}{HTML}{1F968B}
\definecolor{viridisgreen}{HTML}{35B779}
\usepackage{graphicx}
\usepackage[colorlinks=true, linkcolor=blue, citecolor=blue]{hyperref}
\usepackage{tikz}
\usetikzlibrary{arrows.meta,positioning,calc}
\usepackage{subcaption}
\newcounter{acond}
\renewcommand{\theacond}{A\arabic{acond}}
\newcommand{\Aconditem}{%
  \refstepcounter{acond}\item[\textbf{(\theacond)}]%
}
\theoremstyle{plain}
\newtheorem{theorem}{Theorem}[section]
\newtheorem{lemma}[theorem]{Lemma}
\newtheorem{corollary}[theorem]{Corollary}
\newtheorem{proposition}[theorem]{Proposition}
\theoremstyle{definition}
\newtheorem{definition}[theorem]{Definition}
\theoremstyle{remark}
\newtheorem{remark}[theorem]{Remark}
\newtheorem{example}[theorem]{Example}
\theoremstyle{plain}

\theoremstyle{remark}

\newcommand{\Pcal}{\mathcal{P}}
\newcommand{\Hcal}{\mathcal{H}}
\newcommand{\Wcal}{\mathcal{W}}
\newcommand{\E}{\mathbb{E}}
\newcommand{\Zcal}{\mathcal{Z}}
\DeclareMathOperator*{\argmin}{arg\,min}

\begin{document}

\twocolumn[

\aistatstitle{Beyond Discrepancy: Oracle Drift in Nonstationary Weighted Risk Minimization}

\aistatsauthor{Tobias Brock  \And Thomas Nagler}

\aistatsaddress{LMU Munich \\   Munich Center for Machine Learning}
]
\begin{abstract}
We analyze distributional drift in nonstationary weighted risk minimization by
decomposing the excess risk into estimation, drift, and approximation components. The drift error is commonly bounded through discrepancy-type quantities that control changes of the risk over classes of predictors. We instead relate the drift error directly to the movement of the best predictor within the model class and analyze its behavior in parametrized regression models. Our analysis shows that discrepancy-type quantities can be overly conservative: unlike our oracle drift, they may scale at a larger order than the resulting excess-risk contribution and can respond to changes in the risk surface that do not affect the optimal predictor. For linear regression, we characterize this gap sharply and combine the resulting drift bounds with estimation guarantees. We further extend the analysis to nonparametric regression.
\end{abstract}

\section{Introduction}

In time series, the random variables $X_1,\ldots,X_t$ are drawn from (possibly) different distributions $P_1,\ldots,P_t$.
We want to estimate a hypothesis $\hat h$ from past data $X_1,\ldots,X_{t-1}$ by empirical risk minimization (ERM) out of a class of functions $\Hcal$ and measure its performance at time $t$ by the excess risk $R_t(\hat h)-R_t(h_t^*)$, where $R_t(h)=\E_{P_t}[L(X,h)]$ and
\[
 h_t^*\in\argmin_{h\ \mathrm{measurable}}R_t(h),\qquad
 \bar h_t\in\argmin_{h\in\Hcal}R_t(h)
\]
denote the Bayes optimal predictor and the in-class oracle at time $t$, respectively.
The standard decomposition of the excess risk is given by
\[
 R_t(\hat h)-R_t(h_t^*)=
 \underbrace{R_t(\hat h)-R_t(\bar h_t)}_{\text{estimation}}
 +\underbrace{R_t(\bar h_t)-R_t(h_t^*)}_{\text{approximation}},
\]
where estimation depends on the data and the richness of $\Hcal$, and approximation captures the model misspecification.
While both of these quantities are well understood if the time series is independent and identically distributed (\textit{iid}), real data often exhibits nonstationarity or dependence, as for example in finance \citep{cont2001empirical}.
The analysis of the excess risk under nonstationarity is challenging because the distributions $P_1,\ldots,P_t$ may drift over time.

In such scenarios, it is natural to weight the risks.
Let $w\in[0,1]^{t-1}$ with $\sum_{i=1}^{t-1}w_i=1$. Define
\begin{align*}
 R_t^w(h)=\sum_{i=1}^{t-1}w_iL(X_i,h),\;
 R^w(h)=\sum_{i=1}^{t-1}w_i\E[L(X_i,h)],
\end{align*}
and denote the weighted empirical risk minimizer and the in-class weighted oracle as
\[
 \hat h_w=\argmin_{h\in\Hcal}R_t^w(h),\quad
 \bar h_w=\argmin_{h\in\Hcal}R^w(h).
\]
The choice of the weight family $\Wcal$ is highly task-specific and, e.g., determined via backtesting.

\paragraph{Related Work}
Previous works derive bounds for the generalization gap under drift using discrepancy-type quantities \citep{pmlr-v206-awasthi23b}, including path-dependent versions based on conditional expected losses \citep{KuznetsovMohri2016,KuznetsovMohri2017,KuznetsovMohri2020}, while \citet{HannekeYang2019} bound the in-class excess risk.
We use the symmetric marginal version
\[
    \mathrm{dis}(P_{i},P_j)
    =\sup_{h\in\Hcal}|R_i(h)-R_j(h)|,
\]
since comparing the two class oracles involves risk differences in both directions.
A simple add-and-subtract argument shows that the discrepancy is a natural upper bound of the movement of the in-class oracle,
\begin{align} \label{eq:decomposition_discrepancy}
R_t(\bar h_w)-R_t(\bar h_t) \leq 2
\mathrm{dis}\left(\sum_{i=1}^{t-1}w_iP_i,P_t\right).
\end{align}
It requires control over the entire hypothesis class.
Localized versions of the discrepancy restrict the comparison to a subset of the class \citep{zhang2020localized,demathelin2022discrepancy}.
More recently, \citet{brock2026fast} develop fast-rate oracle inequalities for weighted ERM, and decompose the excess risk into learning and drift errors.
Other work selects weights or windows adaptively \citep{huangWang2025stability,baby2025adaptive}, which is complementary to our analysis for a given class of weights $\Wcal$.
Appendix~\ref{sec:related_work} gives an extended overview.

\paragraph{Problem Formulation}
Our goal is to measure the performance of $\hat h_w$ at time $t$ by considering the in-class decomposition visualized in Figure~\ref{fig:two_decompositions} that separates the out-of-sample excess risk into estimation, drift, and approximation components:
\begin{align}\label{eq:decomposition_class_oracle_2}
&R_t(\hat h_w)-R_t(h_t^*)
=\underbrace{R_t(\hat h_w)-R_t(\bar h_w)}_{\text{estimation}}\notag\\
&\quad+\underbrace{R_t(\bar h_w)-R_t(\bar h_t)}_{\text{oracle drift}}
+\underbrace{R_t(\bar h_t)-R_t(h_t^*)}_{\text{approximation}}.
\end{align}
The estimation term can be controlled by a fast-rate oracle inequality that holds uniformly across distributions, while the approximation term is standard and can be bounded by established arguments.

The focus of this work is the analysis of the oracle drift that measures the movement of the best predictor available in the chosen class at time $t$ against the weighted in-class oracle of the past distributions,
and its relation to discrepancy-based quantities.
In particular, the oracle drift can be zero or higher-order when discrepancy-based upper bounds are large, even in standard settings.

\paragraph{Contribution}
Our contributions are as follows:
\begin{enumerate}[leftmargin=*,nosep]
 \item We bound the oracle drift with discrepancy-type quantities and discuss an alternative excess-risk decomposition (Section~\ref{sec:decompositions}).
 \item We state a general bound for the oracle drift for parametrized function classes and a fast-rate bound for the estimation term (Section~\ref{sec:estimation}).
 \item For linear regression, we show that the oracle drift scales optimally in the excess-risk contribution, whereas discrepancy-based measures can be overly conservative and respond to aspects of the data that are irrelevant to the optimal predictor (Section~\ref{sec:linear_models}).
 \item We combine the drift results with a fast-rate estimation bound to obtain explicit excess-risk guarantees for linear and nonparametric regression and discuss the bias--variance trade-off of estimation and drift (Sections~\ref{sec:linear_models}--\ref{sec:nonparametric-extension}).
\end{enumerate}

\begin{figure}[t]
\centering
\tikzset{
  >=Latex,
  decomp space/.style={draw=black!55, thick},
  decomp sample/.style={circle, fill=viridisteal!75!white, inner sep=1.05pt},
  decomp point/.style={circle, fill=black, inner sep=1.55pt},
  decomp est/.style={->, thick, viridispurple!78!white},
  decomp drift/.style={->, thick, viridisgreen!78!white},
  decomp inclass/.style={->, thick, viridisblue!78!white},
  hyplabel/.style={font=\footnotesize},
  errlabel/.style={font=\fontsize{5.5}{6}\selectfont}
}
\def\setshift{-0.18}
\resizebox{0.97\linewidth}{!}{%
\begin{tikzpicture}[font=\scriptsize]
  \draw[decomp space] (\setshift,0) ellipse [x radius=1.86, y radius=1.82];
  \node at ({-1.04+\setshift},0.82) {\Large $\Hcal$};
  \foreach \x/\y in {-1.68/0.10,-1.12/-0.95,-0.50/0.92,0.10/1.08}
    \node[decomp sample] at ({\x+\setshift},\y) {};
  \node[decomp point,label={[hyplabel]above:{$\hat h_w$}}] (A) at ({-1.25+\setshift},-0.05) {};
  \node[decomp point,label={[hyplabel]above:{$\bar h_w$}}] (B) at ({-0.10+\setshift},-0.05) {};
  \node[decomp point,label={[hyplabel]above:{$\bar h_t$}}] (C) at ({1.20+\setshift},-0.05) {};
  \node[decomp point,label={[hyplabel]above:{$h_t^*$}}] (D) at (3.85,-0.05) {};
  \draw[decomp est] (A) -- (B);
  \draw[decomp inclass] (B) -- (C);
  \draw[decomp drift] (C) -- (D);
  \node[viridispurple!78!white, errlabel, align=center] at (-0.86,-0.34) {estimation};
  \node[viridisblue!78!white, errlabel, align=center] at (0.37,-0.34) {oracle\\drift};
  \node[viridisgreen!78!white, errlabel, align=center] at (2.58,-0.34) {approximation};
\end{tikzpicture}%
}
\caption{Visualization of the in-class decomposition.}
\label{fig:two_decompositions}
\end{figure}

\section{Related Drift Measures}\label{sec:decompositions}
We now refine the discrepancy bound \eqref{eq:decomposition_discrepancy} by localization and contrast the oracle drift with the Bayes drift.

\paragraph{Localized Discrepancy}
The discrepancy can be quite conservative, as it takes a supremum over all of $\Hcal$, although predictors with large weighted risk are usually not selected by the oracle or by weighted ERM.
Its natural localized analogue retains the same risk difference but restricts the supremum to predictors near the weighted oracle.
For a radius $\varepsilon\ge0$, define
\[
 \Hcal_{w,\varepsilon}=\{h\in\Hcal:R^w(h)-R^w(\bar h_w)\le\varepsilon\}
\]
and, with $P_w=\sum_{i<t}w_iP_i$,
\begin{equation*}
 \mathrm{dis}_\varepsilon^{\mathrm{loc}}(P_w,P_t)
 =\sup_{h\in\Hcal_{w,\varepsilon}}
 |R^w(h)-R_t(h)|.
\end{equation*}
Localized generalization analyses typically choose $\varepsilon$ to contain $\hat h_w$, whereas the oracle comparison below also requires $\bar h_t\in\Hcal_{w,\varepsilon}$.

Clearly, $\mathrm{dis}_\varepsilon^{\mathrm{loc}}(P_w,P_t)
\le \mathrm{dis}(P_w,P_t)$. If
$\varepsilon\ge R^w(\bar h_t)-R^w(\bar h_w)$, so that
$\bar h_t\in\Hcal_{w,\varepsilon}$, the argument for
\eqref{eq:decomposition_discrepancy} applies on
$\Hcal_{w,\varepsilon}$ and gives
\begin{align}\label{eq:decomposition_localized}
\frac{1}{2}
\underbrace{\bigl[R_t(\bar h_w)-R_t(\bar h_t)\bigr]}
_{\text{oracle drift}}
\le
\underbrace{\mathrm{dis}_\varepsilon^{\mathrm{loc}}(P_w,P_t)}
_{\substack{\text{localized}\\\text{discrepancy}}}
\le
\underbrace{\mathrm{dis}(P_w,P_t)}_{\text{discrepancy}}.
\end{align}
Hence \eqref{eq:decomposition_localized} refines \eqref{eq:decomposition_discrepancy} whenever the localization contains both oracles.

The three quantities form a hierarchy.
The discrepancy compares risk differences over all of $\Hcal$, the localized discrepancy compares the same risk difference over predictors near the weighted oracle, and the oracle drift measures movement of the optimizer itself.
Localization does not remove changes of the risk level: since $\bar h_w\in\Hcal_{w,\varepsilon}$, the localized discrepancy is at least $|R^w(\bar h_w)-R_t(\bar h_w)|$ (Section~\ref{sec:noise-drift}).

\paragraph{Bayes Comparison}
The alternative Bayes decomposition \citep{brock2026fast} replaces the target class oracle by the weighted Bayes rule
$h_w^*\in\argmin_{h\ \mathrm{measurable}}R^w(h)$:
\begin{align*}
&R_t(\hat h_w)-R_t(h_t^*)
=\underbrace{R_t(\hat h_w)-R_t(\bar h_w)}_{\text{estimation}}\\
&\quad+\underbrace{R_t(\bar h_w)-R_t(h_w^*)}_{\text{weighted approximation}}+\underbrace{R_t(h_w^*)-R_t(h_t^*)}_{\text{Bayes drift}}.
\end{align*}
The Bayes drift depends only on the weights and the data-generating process (DGP), not on $\Hcal$.
Under square loss, with feature densities $p_i$ and $p_w=\sum_{i<t}w_ip_i$,
\[
 h_w^*(z)=\sum_{i<t}\frac{w_ip_i(z)}{p_w(z)}h_i^*(z),\qquad p_w(z)>0.
\]
The effective weights are feature-dependent under covariate drift.
Thus $h_w^*$ can be difficult to analyze and need not lie in $\Hcal$ even when the individual Bayes rules do.
This motivates focusing on the in-class oracle when studying the effect of drift on a chosen class.

The rest of the paper is organized as follows. Section~\ref{sec:estimation} gives the general results for the oracle drift and the estimation error, and Sections~\ref{sec:linear_models} and~\ref{sec:nonparametric-extension} provide the analysis for specific regression scenarios.

\section{General Results}\label{sec:estimation}
\subsection{Oracle Drift}\label{sec:deterministic}\label{sec:in-class-drift}

We control the oracle drift through a first-order comparison of the weighted and target risks with a local strong convexity-type condition.
Let $\Hcal=\{h_\theta:\theta\in\Theta\subseteq\mathbb R^p\}$, where \(R_t\) and \(R^w\) are differentiable. Denote
\[
    \theta_w\in\argmin_{\theta\in\Theta}R^w(\theta),
    \qquad
    \theta_t\in\argmin_{\theta\in\Theta}R_t(\theta),
\]
so that $\bar h_w=h_{\theta_w}$ and $\bar h_t=h_{\theta_t}$, and assume $[\theta_t,\theta_w]=\{s\theta_t + (1 - s)\theta_w\colon s\in[0, 1]\} \subseteq \Theta$.
\begin{theorem}\label{thm:parametric_oracle_stability_local}
Assume that, for some \(c_w>0\),
\begin{equation}\label{eq:curvature}
        \left\langle
    \nabla R_t(\theta_w)-\nabla R_t(\theta_t),
    \theta_w-\theta_t
    \right\rangle
    \ge
    c_w\|\theta_w-\theta_t\|_2^2 .
\end{equation}
Assume further that there exists \(\Delta_w<\infty\) such that
\[
    \|\nabla R^w(\theta_w)-\nabla R_t(\theta_w)\|_2
    \le
    \Delta_w.
\]
If  $\|h_{\theta_w}-h_{\theta_t}\|_{L_2(P_t)}
    \le
    L_w\|\theta_w-\theta_t\|_2$ for some $L_w<\infty$, then
\[
    \|\bar h_w-\bar h_t\|_{L_2(P_t)}^2
    \le
    \frac{L_w^2}{c_w^2}\Delta_w^2 .
\]
\end{theorem}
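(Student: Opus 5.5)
The plan is to reduce the $L_2(P_t)$ distance of the two oracles to the parameter distance $\|\theta_w-\theta_t\|_2$ via the Lipschitz assumption. We then bound that parameter distance by $\Delta_w/c_w$ using the first-order optimality conditions of both minimizers together with the curvature condition \eqref{eq:curvature}. Squaring and multiplying by $L_w^2$ then gives the claim.

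\textbf{Step 1 (first-order conditions).} Since $\theta_w$ minimizes $R^w$ over $\Theta$ and $\theta_t$ minimizes $R_t$ over $\Theta$, and the segment $[\theta_t,\theta_w]\subseteq\Theta$, I will use the one-sided directional optimality conditions along this segment. Differentiability and minimality of $R^w(\theta_w+s(\theta_t-\theta_w))$ at $s=0$ give $\langle\nabla R^w(\theta_w),\theta_t-\theta_w\rangle\ge 0$. In the same way, minimality of $R_t$ at $\theta_t$ gives $\langle\nabla R_t(\theta_t),\theta_w-\theta_t\rangle\ge 0$. This is the only place where the segment assumption is needed. In the interior case both gradients simply vanish, but the variational-inequality form covers constrained $\Theta$ as well.

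\textbf{Step 2 (combine with curvature).} Starting from \eqref{eq:curvature}, we have
\[
c_w\|\theta_w-\theta_t\|_2^2\le\langle\nabla R_t(\theta_w),\theta_w-\theta_t\rangle-\langle\nabla R_t(\theta_t),\theta_w-\theta_t\rangle\le\langle\nabla R_t(\theta_w),\theta_w-\theta_t\rangle,
\]
using the second optimality inequality. Adding the nonnegative quantity $\langle\nabla R^w(\theta_w),\theta_t-\theta_w\rangle$ gives the bound $\langle\nabla R_t(\theta_w)-\nabla R^w(\theta_w),\theta_w-\theta_t\rangle$. By Cauchy--Schwarz this is at most $\Delta_w\|\theta_w-\theta_t\|_2$. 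If $\theta_w=\theta_t$ the claim is trivial. Otherwise we divide by $\|\theta_w-\theta_t\|_2$ and obtain $\|\theta_w-\theta_t\|_2\le\Delta_w/c_w$.

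\textbf{Step 3 (transfer to function space).} Apply the assumed bound $\|h_{\theta_w}-h_{\theta_t}\|_{L_2(P_t)}\le L_w\|\theta_w-\theta_t\|_2$. Square it and recall that $\bar h_w=h_{\theta_w}$ and $\bar h_t=h_{\theta_t}$.

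The only delicate point is Step 1. There one must justify the directional optimality inequalities without assuming interior minimizers. This is done by differentiating along the segment, so the main care is in getting the signs right when combining these inequalities with \eqref{eq:curvature}.
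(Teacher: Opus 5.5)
Your proposal is correct and follows the paper's proof essentially step for step. You derive the two variational inequalities from minimality along the segment $[\theta_t,\theta_w]$ and combine them with the curvature condition and Cauchy--Schwarz to obtain $\|\theta_w-\theta_t\|_2\le\Delta_w/c_w$. You then transfer this to $L_2(P_t)$ via the Lipschitz bound.
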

The inequality \eqref{eq:curvature} holds, e.g., if $R_t$ is strongly convex near the segment $[\theta_t,\theta_w]$.
When $R_t$ is twice differentiable, \eqref{eq:curvature} can still hold if the Hessian vanishes at isolated points, given positive average curvature along $[\theta_t,\theta_w]$.
If this curvature degenerates as the oracles approach, or vanishes in their displacement direction, then $c_w$ may shrink to zero and the quadratic bound can weaken or cease to provide control.
The parametrization only requires Lipschitzness between the two oracles.
$\Delta_w$ measures the first-order mismatch between the weighted risk and the risk at time $t$, evaluated at $\theta_w$, yielding a quadratic bound on the oracle movement.
Unlike discrepancy-based bounds,
we do not require global control of $\Theta$.
The proof is given in Appendix~\ref{app:proof:parametric_oracle_stability_local}.

\subsection{Estimation}

Our fast-rate oracle inequality for the estimation error is closely related to \citet{brock2026fast}, who bound the learning error $R_t(\hat h_w)-R_t(h_w^*)$.
However, we separate the approximation error from the estimation error in decomposition \eqref{eq:decomposition_class_oracle_2}, making the learning result purely class-dependent.

The full conditions are in Appendix~\ref{app:assumptions}.
Essentially, we require transferability of bounds across time, control of local deviations in the empirical risk, and a Bernstein condition that is commonly imposed for fast rates.
Write $\Pcal=\{P_i:1\le i\le t\}\cup\{P_w:w\in\Wcal\}$, $\|f\|_{L_2(P)}=(\E_P[f^2])^{1/2}$, and $\|f\|_{L_2(\Pcal)}=\sup_{P\in\Pcal}\|f\|_{L_2(P)}$.

\begin{theorem}
\label{thm:cr-fast}
Fix \(\delta\in(0,1)\), and assume
\eqref{cond:cr-loss-upper}--\eqref{cond:cr-simplify}. Let \(\Wcal\) be a class
of weight vectors.
Then, with probability at least \(1-2\delta\), it holds for all \(w\in\Wcal\):
\begin{align*}
    \|\hat h_w-\bar h_w\|_{L_2(\Pcal)}^2
    \lesssim
    r(\|w\|_2)^2\log^2\left(1/\delta\right).
\end{align*}
\end{theorem}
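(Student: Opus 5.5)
The plan is a localized, peeling-type argument in the style of fast-rate oracle inequalities, run uniformly over $w\in\Wcal$ and then transferred across the distributions in $\Pcal$. For fixed $w$, the starting point is the basic inequality from the definition of $\hat h_w$ as weighted empirical risk minimizer: $R_t^w(\hat h_w)\le R_t^w(\bar h_w)$. Adding and subtracting the population weighted risk gives
\[
 R^w(\hat h_w)-R^w(\bar h_w)\le \bigl[R^w-R_t^w\bigr](\hat h_w)-\bigl[R^w-R_t^w\bigr](\bar h_w).
\]
The left-hand side is the weighted excess risk of $\hat h_w$. The right-hand side is a centered weighted empirical process indexed by the loss differences $L(\cdot,h)-L(\cdot,\bar h_w)$.

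The first step uses the Bernstein condition from the appendix to link the two sides. It relates the variance proxy $\|h-\bar h_w\|_{L_2(P_w)}^2$ of these loss differences to the weighted excess risk $R^w(h)-R^w(\bar h_w)$. This makes the problem self-bounding.

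The second step controls the local deviations of the weighted empirical process over the shells $\{h:\|h-\bar h_w\|_{L_2(\Pcal)}\le r\}$. For this I would use the assumed local deviation condition, which in the nonstationary, dependent setting presumably plays the role of a concentration inequality. Its effective sample size enters through $\|w\|_2$: $\|w\|_2^{-2}$ acts like $n$. The deviation should be of order $\phi(r,\|w\|_2)\log(1/\delta)$ for some modulus $\phi$ whose fixed point defines $r(\|w\|_2)$. A standard peeling over dyadic radii, combined with a union bound over shells (absorbed into the $\log(1/\delta)$ factors), then yields
\[
 \|\hat h_w-\bar h_w\|_{L_2(P_w)}^2\lesssim r(\|w\|_2)^2\log^2(1/\delta).
\]
The squared logarithm arises from combining the Bernstein/sub-exponential tail with the peeling. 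The second $\delta$ in the probability $1-2\delta$ comes from a separate event needed for uniformity over $w\in\Wcal$. This is presumably a chaining or covering bound over the weight class, or the event where the loss upper bound holds.

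The third step passes from $L_2(P_w)$ to the supremum norm $\|\cdot\|_{L_2(\Pcal)}$. Here I would invoke the transferability condition, which says that $L_2$ norms of differences in $\Hcal$ are comparable across all $P\in\Pcal$ up to constants. The simplification condition \eqref{cond:cr-simplify} then collapses the remaining constants and lower-order terms into the $\lesssim$.

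I expect the main obstacle to be making the localization uniform in $w$ with a single fixed-point function $r(\|w\|_2)$. The shells are centered at $\bar h_w$, which itself moves with $w$. I would handle this by stratifying $\Wcal$ according to $\|w\|_2$ on a dyadic grid. Within each stratum, I would bound the supremum of the empirical process over the pair $(w,h)$ jointly, treating the map $w\mapsto R_t^w$ as linear in $w$ so that its complexity is controlled by the local deviation assumption.
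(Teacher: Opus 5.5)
Your skeleton matches the paper's: the basic inequality $R^w(\hat h_w)-R^w(\bar h_w)\le\mathbb G_n^w(L_{\hat h_w}-L_{\bar h_w})$, peeling over shells $\{h:R^w(h)-R^w(\bar h_w)\le 2^{k+1}r(\|w\|_2)^2\}$, combining \eqref{cond:cr-bernstein} with \eqref{cond:cr-loss-upper} to get $\|h-\bar h_w\|_{L_2(\Pcal)}\le C_\Pcal[R^w(h)-R^w(\bar h_w)]^{1/2}$, and reusing that inequality at $h=\hat h_w$ at the end.

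\textbf{Concentration step.} The gap is your second step. None of \eqref{cond:cr-loss-upper}--\eqref{cond:cr-simplify} is a concentration or deviation inequality, so there is nothing to invoke there. In particular, \eqref{cond:cr-infty} is only a sup-norm versus $L_2$ comparison. Its role is to make the range $\|L_h-L_{h'}\|_\infty\le\min\{2,C_\Pcal C_\infty^{-1}2^{(k+3)/2}r(\|w\|_2)\}$ shrink with the shell. The concentration for the dependent, nonstationary sample has to be constructed:
\begin{enumerate}[leftmargin=*,nosep]
\item split the sample into alternating blocks of length $m_\beta$;
\item replace them by independent blocks via maximal coupling for $\beta$-mixing sequences;
\item apply a Bernstein inequality to the coupled weighted sums (Lemma~\ref{lem:bernstein}). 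Its variance proxy is $K_\rho\|w\|_2^2\|f\|_{L_2(\Pcal)}^2$, with the supremum over $\Pcal$ because each summand has its own law $P_i$. Its range term is $m_\beta\|w\|_\infty\|f\|_\infty$.
\end{enumerate}
This is how the dependence structure enters $r$, through the rate condition of Theorem~\ref{thm:cr-fast-full}. The coupling failure probability $\frac{n}{m_\beta}\beta(m_\beta)\le\delta$ is the second $\delta$ in $1-2\delta$. It is not a separate event for uniformity over $\Wcal$; that uniformity costs no extra probability and is paid for through the entropy term $K_w$.

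\textbf{Uniformity in $w$.} Your plan here also does not close the argument as stated. Stratifying $\Wcal$ dyadically in $\|w\|_2$ does not control a continuum of weights within a stratum (e.g.\ exponential weights indexed by $\alpha$). Nor does it remove the dependence of the center $\bar h_w$ on $w$. The paper removes the moving center by enlarging the supremum to all pairs $(h,h')\in\Hcal^2$ with $\|h-h'\|_{L_2(\Pcal)}\le C_\Pcal2^{k/2+1}r(\|w\|_2)$. This class depends on $w$ only through $r(\|w\|_2)$. The paper then combines an $\ell_1$-cover of $\Wcal$ with a sup-norm cover of $\Hcal$ (Lemma~\ref{lem:covering}); this is where the $K$-Lipschitz requirement on $r$ and the covering radius $\epsilon_\Wcal$ enter. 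Finally, summing the blocked Bernstein tails over $k\ge M$, with $2^M\asymp\log^2(1/\delta)$, yields the stated $r(\|w\|_2)^2\log^2(1/\delta)$ rate.
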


The full theorem statement and proof are given in Appendix~\ref{app:cr-fast-full}.
The rate function $r(\|w\|_{2})^2$ contains the complexity of $\Hcal,\Wcal$, the dependence of the data, and the \textit{effective sample size} $n_{\mathrm{eff}}=1/\|w\|_2^2$ \citep{kish1992weighting}.
Naturally, this leads to a bias--variance trade-off: using more data increases $n_{\mathrm{eff}}$, whereas emphasizing recent observations reduces temporal bias.
While the rate is generally sharper for larger $n_{\mathrm{eff}}$, it must be carefully balanced with drift and approximation (see Figure~\ref{fig:smooth-bias-variance-tradeoff}).

Concrete estimation rates for linear models and basis expansions are combined with the drift results to obtain explicit excess-risk bounds in
Sections~\ref{sec:linear_models} and~\ref{sec:nonparametric-extension}.

\section{Linear Regression}\label{sec:linear_models}
We now apply the previous results to linear regression with square loss. We compare oracle drift with the (localized) discrepancy in Sections~\ref{sec:curature-linear} and~\ref{sec:discrepancy-linear}, respectively.
Section~\ref{sec:drift-models} applies the theory to concrete drift scenarios and combines it with estimation to give full excess-risk bounds. The proofs and further technical details are stated in Appendix~\ref{app:linear-nonparametric}.

\paragraph{Setup} Let $X_t = (Y_t, Z_t) \in [-B, B] \times \mathbb R^p$, $t \ge 1$, and $L(X_t, h) = (Y_t - h(Z_t))^2$.
This includes autoregressive problems in which $Z_t$ contains lagged values of $Y_t$.
The optimal hypothesis under $P_t$ is the conditional expectation $h_t^*(z)=\E[Y_t\mid Z_t=z]$, and $R_t(h)-R_t(h_t^*)=\|h-h_t^*\|_{L_2(P_t)}^2$ for every $h$.
Using this identity and $(a+b)^2 \le 2(a^2+b^2)$,
\begin{align*}
    R_t(\bar h_w)-R_t(h^*_t)\!\lesssim\! \| \bar h_w - \bar h_t\|^2_{L_2(P_t)}\!+\! \| \bar h_t - h^*_t\|^2_{L_2(P_t)},
\end{align*}
allowing us to apply Theorem~\ref{thm:parametric_oracle_stability_local} to control the drift.
Let \(Z_t\in\Zcal\subset\mathbb R^p\), with
\(\sup_{z\in\Zcal}\|z\|_2\le1\), and consider
\[
    \Hcal=\{z\mapsto \beta^\top z:\|\beta\|_2\le B\},
\]
where we have $\Theta=\{\beta:\|\beta\|_2\le B\}$. The data are generated according to
\[
    Y_t=\beta_t^{*\top}Z_t+\eta_t,
    \qquad
    \|\beta_t^*\|_2\le B,
    \qquad
    \E[\eta_t\mid Z_t]=0 .
\]
For \(i\le t\), define the second moment, cross moment, and Gram matrix by
\begin{align*}
    a_i=\E_{P_i}[Y^2],
    \quad
    b_i=\E_{P_i}[ZY], \quad
    G_i=\E_{P_i}[ZZ^\top],
\end{align*}
and let
\begin{align*}
    a_w=\sum_{i=1}^{t-1}w_i a_i,
    \quad
    b_w=\sum_{i=1}^{t-1}w_i b_i,
    \quad
    G_w=\sum_{i=1}^{t-1}w_i G_i .
\end{align*}
The weighted and target class oracles are
\[
    \beta_w\in\argmin_{\|\beta\|_2\le B}R^w(\beta),
    \qquad
    \beta_t\in\argmin_{\|\beta\|_2\le B}R_t(\beta),
\]
with \(\bar h_w(z)=\beta_w^\top z\) and
\(\bar h_t(z)=\beta_t^\top z\). Throughout this section we assume the target
Gram matrix is well conditioned:
\[
    \gamma I_p\preceq G_t\preceq \Gamma I_p
    \qquad
    \text{for some }0<\gamma\le\Gamma<\infty .
\]

\subsection{Oracle Drift}\label{sec:curature-linear}
Theorem~\ref{thm:parametric_oracle_stability_local} controls the movement of the in-class oracle through the
first-order optimality equations. We have
\[
    \nabla R_i(\beta)=2(G_i\beta-b_i),
    \qquad
    \nabla R^w(\beta)=2(G_w\beta-b_w).
\]
We write
\[
    v_w = (G_w-G_t)\beta_w-(b_w-b_t).
\]
This residual measures how far \(\beta_w\) is from satisfying the normal
equation at time $t$. In particular, it evaluates the effect of the distributional change at
\(\beta_w\).
\begin{proposition}\label{prop:localnormalcontrol}
The oracle drift satisfies
\[
    \|\bar h_w-\bar h_t\|_{L_2(P_t)}^2
    \le
    \frac{\Gamma}{\gamma^2}
    \|v_w\|_2^2.
\]
\end{proposition}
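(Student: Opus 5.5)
The plan is to obtain the bound as a direct specialization of Theorem~\ref{thm:parametric_oracle_stability_local}, with $\Theta=\{\beta:\|\beta\|_2\le B\}$ and $h_\beta(z)=\beta^\top z$. Since $\Theta$ is a Euclidean ball, it is convex, so the segment $[\beta_t,\beta_w]$ lies in $\Theta$. Both $R_t(\beta)=a_t-2b_t^\top\beta+\beta^\top G_t\beta$ and $R^w$ are quadratic, hence differentiable. It then remains to identify the three constants $c_w$, $\Delta_w$, $L_w$ and check that they combine to $\Gamma/\gamma^2$.

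\textbf{Curvature.} Using $\nabla R_t(\beta)=2(G_t\beta-b_t)$ we get
\[
\langle \nabla R_t(\beta_w)-\nabla R_t(\beta_t),\beta_w-\beta_t\rangle = 2(\beta_w-\beta_t)^\top G_t(\beta_w-\beta_t)\ge 2\gamma\|\beta_w-\beta_t\|_2^2 .
\]
The inequality uses $G_t\succeq\gamma I_p$, so \eqref{eq:curvature} holds with $c_w=2\gamma$.

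\textbf{Gradient mismatch.} We have $\nabla R^w(\beta_w)-\nabla R_t(\beta_w)=2\bigl[(G_w-G_t)\beta_w-(b_w-b_t)\bigr]=2v_w$, so we may take $\Delta_w=2\|v_w\|_2$.

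\textbf{Lipschitz constant.} We have $\|h_{\beta_w}-h_{\beta_t}\|_{L_2(P_t)}^2=(\beta_w-\beta_t)^\top G_t(\beta_w-\beta_t)\le\Gamma\|\beta_w-\beta_t\|_2^2$, which gives $L_w=\sqrt{\Gamma}$.

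\textbf{Conclusion.} Plugging these into the theorem yields
\[
\|\bar h_w-\bar h_t\|_{L_2(P_t)}^2\le \frac{\Gamma\cdot 4\|v_w\|_2^2}{4\gamma^2}=\frac{\Gamma}{\gamma^2}\|v_w\|_2^2 .
\]

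The only step needing care is the constraint $\|\beta\|_2\le B$: the oracles need not be interior points, so the gradients need not vanish. If Theorem~\ref{thm:parametric_oracle_stability_local} is proved via first-order variational inequalities on the convex set $\Theta$, nothing more is needed. Otherwise I would redo that step directly, as follows. Optimality of $\beta_t$ for $R_t$ and of $\beta_w$ for $R^w$ over the convex ball gives
\[
\langle\nabla R_t(\beta_t),\beta_w-\beta_t\rangle\ge0,\qquad \langle\nabla R^w(\beta_w),\beta_t-\beta_w\rangle\ge0 .
\]
Adding these two inequalities and inserting $\pm\nabla R_t(\beta_w)$ gives
\[
2\gamma\|\beta_w-\beta_t\|_2^2\le\langle\nabla R^w(\beta_w)-\nabla R_t(\beta_w),\beta_t-\beta_w\rangle\le 2\|v_w\|_2\|\beta_w-\beta_t\|_2 .
\]
Hence $\|\beta_w-\beta_t\|_2\le\|v_w\|_2/\gamma$, and the Lipschitz step finishes the proof.
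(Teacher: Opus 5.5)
Your proposal is correct and matches the paper's proof: it applies Theorem~\ref{thm:parametric_oracle_stability_local} with curvature constant $2\gamma$, gradient mismatch $\Delta_w=2\|v_w\|_2$, and $L_w=\sqrt{\Gamma}$ via $\|h_{\beta_w}-h_{\beta_t}\|_{L_2(P_t)}^2=(\beta_w-\beta_t)^\top G_t(\beta_w-\beta_t)\le\Gamma\|\beta_w-\beta_t\|_2^2$. Your concern about non-interior oracles is already covered, because the paper proves that theorem through exactly the first-order variational inequalities on the convex parameter set that you wrote out.
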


Proposition~\ref{prop:localnormalcontrol} enables sharper control of the drift because it allows cancellations between
\((G_w-G_t)\beta_w\) and \(b_w-b_t\). Thus the bound on the oracle drift is solely controlled by the error in the
 first-order condition, rather than by all changes in the distribution.
This quadratic scaling of the bound is the natural excess-risk scale:
the risk is locally quadratic around its minimizer, and we can show that
this scaling is also optimal.
If the oracles lie in the interior of \(\|\beta\|_2\le B\), the normal equations
\[
    G_w\beta_w=b_w,
    \qquad
    G_t\beta_t=b_t
\]
give $v_w=G_t(\beta_t-\beta_w)$ and
\[
 R_t(\bar h_w)-R_t(\bar h_t)=\|\bar h_w-\bar h_t\|_{L_2(P_t)}^2
 =v_w^\top G_t^{-1}v_w.
\]
Thus the actual oracle drift is quadratic in the first-order residual.
\begin{proposition}\label{thm:driftoptimality}
Assume that \(\beta_w\) and \(\beta_t\) are interior points of
\(\Theta\). Then
\[
    \frac{1}{\Gamma}\|v_w\|_2^2
    \le
    \|\bar h_w-\bar h_t\|_{L_2(P_t)}^2
    \le
    \frac{1}{\gamma}\|v_w\|_2^2 .
\]
\end{proposition}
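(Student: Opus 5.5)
The proof reduces both inequalities to the spectral bounds on $G_t$, using the identity $v_w=G_t(\beta_t-\beta_w)$ that holds for interior oracles.

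\textbf{Step 1: first-order conditions.} Since $R^w$ and $R_t$ are differentiable convex quadratics and $\beta_w,\beta_t$ are interior minimizers over $\Theta$, their gradients vanish. This gives the normal equations $G_w\beta_w=b_w$ and $G_t\beta_t=b_t$. Substituting into the definition of $v_w$ yields
\[
v_w=(G_w\beta_w-b_w)-(G_t\beta_w-b_t)=-(G_t\beta_w-G_t\beta_t)=G_t(\beta_t-\beta_w).
\]

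\textbf{Step 2: express the drift as a quadratic form.} Set $d=\beta_w-\beta_t$. Since $\bar h_w-\bar h_t$ is the linear map $z\mapsto d^\top z$,
\[
\|\bar h_w-\bar h_t\|_{L_2(P_t)}^2=\E_{P_t}[(d^\top Z)^2]=d^\top G_t d .
\]
By the assumption $\gamma I_p\preceq G_t$, the matrix $G_t$ is invertible, so $d=-G_t^{-1}v_w$. Substituting gives
\[
d^\top G_t d=v_w^\top G_t^{-1}v_w .
\]

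\textbf{Step 3: spectral sandwich.} The bound $\gamma I_p\preceq G_t\preceq\Gamma I_p$ implies $\Gamma^{-1}I_p\preceq G_t^{-1}\preceq\gamma^{-1}I_p$, since inversion reverses the Loewner order for positive definite matrices. Evaluating this at $v_w$ gives
\[
\Gamma^{-1}\|v_w\|_2^2\le v_w^\top G_t^{-1}v_w\le\gamma^{-1}\|v_w\|_2^2,
\]
which is the claim.

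There is no real obstacle. The only points needing care are two justifications. First, interiority is what makes the gradients vanish; without it the KKT multipliers of the constraint $\|\beta\|_2\le B$ would enter $v_w$. Second, the eigenvalue bounds must be transferred to $G_t^{-1}$; this follows by diagonalizing the symmetric matrix $G_t$.
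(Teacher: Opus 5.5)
Your proposal is correct and follows the paper's proof essentially line by line. Like the paper, you use interiority to get the normal equations, derive $v_w=G_t(\beta_t-\beta_w)$, rewrite the drift as $v_w^\top G_t^{-1}v_w$, and conclude from the eigenvalue bounds on $G_t$.
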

Thus, the quadratic scaling is the exact order of the oracle movement, up to constants.

\subsection{Global and Localized Discrepancy}\label{sec:discrepancy-linear}
Bounding the discrepancy requires control over the entire hypothesis space. For the square loss,
\[
    R^w(\beta)-R_t(\beta)
    =
    a_w-a_t
    -
    2\beta^\top(b_w-b_t)
    +
    \beta^\top(G_w-G_t)\beta,
\]
which yields
\[
    \mathrm{dis}(P_w,P_t)
    =
    \sup_{\|\beta\|_2\le B}|R^w(\beta)-R_t(\beta)|.
\]
For a matrix $A$, let $\|A\|_{\mathrm{op}}=\sup_{\|u\|_2=1}\|Au\|_2$.

\begin{proposition}\label{prop:discrepancy-asymp}
Assume $\|\beta_w\|_2\le B-\rho$ for some $\rho\in(0,B)$. Then
\[
    \mathrm{dis}(P_w,P_t)
    \asymp
    |a_w-a_t|
    +
    \|v_w\|_2
    +
    \|G_w-G_t\|_{\mathrm{op}},
\]
where the constants depend only on $B$ and $\rho$.
\end{proposition}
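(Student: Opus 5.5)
Write $\Delta a=a_w-a_t$, $\Delta b=b_w-b_t$, $\Delta G=G_w-G_t$ (symmetric), and $f(\beta)=R^w(\beta)-R_t(\beta)=\Delta a-2\beta^\top\Delta b+\beta^\top\Delta G\beta$. The plan is to re-center the quadratic $f$ at $\beta_w$, using $v_w=\Delta G\beta_w-\Delta b$. Substituting $\beta=\beta_w+u$ gives
\[
 f(\beta_w+u)=f(\beta_w)+2u^\top v_w+u^\top\Delta G\,u,
 \qquad
 f(\beta_w)=\Delta a-\beta_w^\top\Delta G\beta_w-2\beta_w^\top v_w,
\]
where the second identity uses $\Delta b=\Delta G\beta_w-v_w$. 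I would prove the two directions of $\mathrm{dis}(P_w,P_t)\asymp|\Delta a|+\|v_w\|_2+\|\Delta G\|_{\mathrm{op}}$ separately.

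\emph{Upper bound.} For $\|\beta\|_2\le B$, the triangle inequality on $f(\beta)=\Delta a-2\beta^\top(\Delta G\beta_w-v_w)+\beta^\top\Delta G\beta$ gives
\[
 |f(\beta)|\le|\Delta a|+2B\|v_w\|_2+3B^2\|\Delta G\|_{\mathrm{op}},
\]
with constants depending only on $B$.

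\emph{Lower bound.} Set $D=\mathrm{dis}(P_w,P_t)$. Since $\|\beta_w\|_2\le B-\rho$, every point $\beta_w+u$ with $\|u\|_2\le\rho$ lies in the ball, so $|f(\beta_w+u)|\le D$ there. The steps are:
\begin{enumerate}[leftmargin=*,nosep]
 \item Symmetrize in $u$. For $\|u\|_2\le\rho$,
 \[
 f(\beta_w+u)-f(\beta_w-u)=4u^\top v_w,
 \qquad
 f(\beta_w+u)+f(\beta_w-u)-2f(\beta_w)=2u^\top\Delta G\,u .
 \]
 \item Take $u=\rho v_w/\|v_w\|_2$ in the first identity to get $4\rho\|v_w\|_2\le2D$.
 \item Take $u=\rho e$ in the second, with $e$ a unit eigenvector of $\Delta G$ for the eigenvalue of largest modulus (possible because $\Delta G$ is symmetric). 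This gives $2\rho^2\|\Delta G\|_{\mathrm{op}}\le4D$.
 \item Evaluate at $\beta=0$, which lies in the ball: $|\Delta a|=|f(0)|\le D$.
\end{enumerate}
Summing the three bounds gives $|\Delta a|+\|v_w\|_2+\|\Delta G\|_{\mathrm{op}}\lesssim D$, with constants depending only on $\rho$.

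The main obstacle is isolating $\|v_w\|_2$ in the lower bound without the $\Delta G$ term interfering. A naive lower bound via $\Delta b$ would mix $\Delta b$ and $\Delta G\beta_w$, and these can cancel. Expanding around $\beta_w$ rather than around $0$, and using the antisymmetric combination, removes the quadratic part exactly. This is where the interior margin $\rho$ is essential. No normal equations or interiority of $\beta_t$ are needed.
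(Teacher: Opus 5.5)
Your proof is correct and follows the paper's argument essentially step for step. The upper bound uses the same substitution $b_w-b_t=(G_w-G_t)\beta_w-v_w$ with the triangle inequality, and the lower bounds on $|a_w-a_t|$ and $\|v_w\|_2$ use the same evaluation at $0$ and antisymmetric difference around $\beta_w$. The one deviation is the curvature term: you extract $\|G_w-G_t\|_{\mathrm{op}}$ from a second difference centered at $\beta_w$ with radius $\rho$ (constant $\rho^2/2$), whereas the paper centers at the origin with radius $B$ and combines with $|f(0)|$ (the slightly better constant $B^2/2$). Also, your displayed formula for $f(\beta_w)$ should read $+2\beta_w^\top v_w$, though it is never used.
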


The term \(\|v_w\|_2\) is the same local perturbation that
appears in Proposition~\ref{prop:localnormalcontrol}. The other terms are global moment changes:
\(|a_w-a_t|\) is a change in the second moment of the response, while
\(\|G_w-G_t\|_{\mathrm{op}}\) captures changes in the feature covariance.

The discrepancy bound is useful if one can
control \(|a_i-a_t|\), \(\|b_i-b_t\|_2\), and
\(\|G_i-G_t\|_{\mathrm{op}}\) uniformly over time. This requires control over the entire function class,
and the resulting drift contribution is first order in $\|v_w\|_2$ compared to the oracle drift's quadratic scaling.
Further, the discrepancy can be large due to changes in \(a_w\) or \(G_w\) that do not move the oracle.
In contrast, Proposition~\ref{prop:localnormalcontrol} gives a second-order contribution when the local perturbation \(v_w\) is small and is not affected by changes in $a_i$.

\paragraph{Localized Discrepancy}
Suppose that $\beta_w$ is an interior oracle and $G_w\succ0$.
Writing $\beta=\beta_w+\tau$ and $\ell_w=R^w(\beta_w)-R_t(\beta_w)$, we have
\begin{equation*}
 R^w(\beta)-R_t(\beta)
 =\ell_w+2\tau^\top v_w+\tau^\top(G_w-G_t)\tau,
\end{equation*}
and $R^w(\beta)-R^w(\beta_w)=\tau^\top G_w\tau$, so that $\Hcal_{w,\varepsilon}$ corresponds to the feasible part of the ellipsoid $\tau^\top G_w\tau\le\varepsilon$.
The three terms are a shift of the risk level at the weighted oracle, a tilt by the first-order residual, and a change of curvature.

\begin{proposition}\label{prop:linear-localized-discrepancy}
Suppose in addition that $\{\beta_w+\tau:\tau^\top G_w\tau\le\varepsilon\}\subseteq\Theta$. Then
\begin{align*}
\mathrm{dis}_\varepsilon^{\mathrm{loc}}(P_w,P_t)
&\asymp |\ell_w|+\sqrt{\varepsilon}\|G_w^{-1/2}v_w\|_2\\
&\quad+\varepsilon\,
\|G_w^{-1/2}(G_w-G_t)G_w^{-1/2}\|_{\mathrm{op}},
\end{align*}
with absolute constants.
\end{proposition}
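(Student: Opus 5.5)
Throughout, write $\tau=G_w^{-1/2}u$, so that the localization constraint $\tau^\top G_w\tau\le\varepsilon$ becomes $\|u\|_2\le\sqrt{\varepsilon}$. By the feasibility assumption, the whole ellipsoid lies in $\Theta$. Hence $\Hcal_{w,\varepsilon}$ is exactly this ellipsoid, and, using the displayed expansion,
\[
\mathrm{dis}_\varepsilon^{\mathrm{loc}}(P_w,P_t)=\sup_{\|u\|_2\le\sqrt\varepsilon}\bigl|\ell_w+2u^\top g+u^\top M u\bigr|,
\]
with $g=G_w^{-1/2}v_w$ and $M=G_w^{-1/2}(G_w-G_t)G_w^{-1/2}$ symmetric. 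The proposition then reduces to a statement about the supremum of a quadratic polynomial over a Euclidean ball. Write $S$ for this supremum and $T=|\ell_w|+\sqrt\varepsilon\|g\|_2+\varepsilon\|M\|_{\mathrm{op}}$ for the target.

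\textbf{Upper bound.} By the triangle inequality and Cauchy--Schwarz, every $u$ in the ball satisfies $|f(u)|\le|\ell_w|+2\sqrt\varepsilon\|g\|_2+\varepsilon\|M\|_{\mathrm{op}}$. This gives $S\le 2T$.

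\textbf{Lower bound.} I will show that each of the three summands of $T$ is at most a constant multiple of $S$, and then add the three bounds. Put $f(u)=\ell_w+2u^\top g+u^\top Mu$.
\begin{enumerate}[nosep]
\item Taking $u=0$ gives $|\ell_w|\le S$.
\item For the linear term, take $u=\pm\sqrt\varepsilon\,g/\|g\|_2$. Then
\[
f(u)-f(-u)=4\sqrt\varepsilon\|g\|_2,
\]
so $4\sqrt\varepsilon\|g\|_2\le 2S$, that is, $\sqrt\varepsilon\|g\|_2\le S/2$.
\item For the curvature term, take $u=\pm\sqrt\varepsilon\,e$, where $e$ is a unit eigenvector of $M$ whose eigenvalue $\lambda$ satisfies $|\lambda|=\|M\|_{\mathrm{op}}$. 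Averaging gives
\[
\tfrac12\bigl(f(u)+f(-u)\bigr)=\ell_w+\varepsilon\lambda.
\]
Hence $\varepsilon|\lambda|\le S+|\ell_w|\le 2S$.
\end{enumerate}
Summing the three bounds yields $T\le 3.5\,S$, so $S\asymp T$ with absolute constants.

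\textbf{Main point of care.} The argument relies on the symmetrization $u\mapsto -u$, which separates the odd (linear) part of $f$ from the even (constant and quadratic) part. Two things must hold for this to work. First, the feasible set must be symmetric about $\beta_w$; the assumption that the full ellipsoid lies in $\Theta$ provides this, because otherwise the set would only be the feasible part of the ellipsoid. Second, the localization set must be exactly this ellipsoid. That follows from the identity $R^w(\beta)-R^w(\beta_w)=\tau^\top G_w\tau$, which holds because $\beta_w$ is an interior point, so $\nabla R^w(\beta_w)=0$. With $G_w\succ0$, the change of variables $\tau=G_w^{-1/2}u$ is a bijection, and nothing else is needed.
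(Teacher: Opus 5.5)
Your proof is correct and is essentially the paper's argument: you reduce $\Hcal_{w,\varepsilon}$ to the full ellipsoid via $\nabla R^w(\beta_w)=0$ and the inclusion assumption, get the upper bound from the triangle inequality and Cauchy--Schwarz, and get the lower bounds from $\tau=0$, the antipodal pair along $G_w^{-1/2}v_w$, and the antipodal pair along an extremal eigenvector of $G_w^{-1/2}(G_w-G_t)G_w^{-1/2}$ (using $\varepsilon|\lambda|\le S+|\ell_w|$). The differences are cosmetic: you combine the three lower bounds by summing rather than by ``max $\ge$ average'', which only changes the absolute constants, and you do not treat the case $v_w=0$ (where your direction $g/\|g\|_2$ is undefined), which is trivial, as the paper notes.
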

Compared with the discrepancy bound, the class radius $B$ is replaced by the local radius $\sqrt{\varepsilon}$ in the $G_w$-norm.
The response moment is not removed, but enters through the risk-level shift $\ell_w$.
The middle term is first order in the residual $v_w$ that generates the quadratic oracle drift, and the last term reacts to changes of $G_w$ whether or not the oracle moves.
For $\Theta=B\mathbb B_2^p$, the inclusion is implied by $\varepsilon\le\lambda_{\min}(G_w)(B-\|\beta_w\|_2)^2$.

\begin{figure*}[t]
    \centering
    \includegraphics[width=0.95\textwidth]{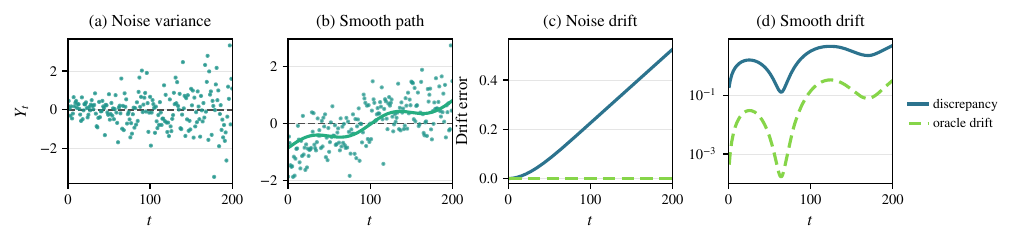}
\caption{Noise and smooth drift: discrepancy responds to irrelevant changes and exceeds oracle drift.}
    \label{fig:drift-error-illustrations}
\end{figure*}

\subsection{Excess Risk in Drift Scenarios}\label{sec:drift-models}\label{sec:excess-risk-linear}
We now apply the previous results to concrete drift settings and combine them with the estimation bound via \eqref{eq:decomposition_class_oracle_2}.
Throughout, $\bar h_t=h_t^*$ and logarithmic factors are omitted; we again refer to Appendix~\ref{app:linear-nonparametric} for technical details.
\subsubsection{Noise Drift}\label{sec:noise-drift}
Assume that the feature distribution is stationary,
\(Z_i\sim P_Z\), and that the regression coefficient is constant,
\(\beta_i^*=\beta^*\). The only nonstationarity is in the noise variance
\[
    \sigma_i^2=\E[\eta_i^2].
\]
\emph{Oracle drift.} In this setting, it holds for all $i<t$,
\begin{align*}
    G_i&=\E[ZZ^\top]=G_t,
    \qquad
    b_i=\E[ZY_i]=G_t\beta^*=b_t.
\end{align*}
Consequently,
\[
    v_w=0, \qquad  \|\bar h_w-\bar h_t\|_{L_2(P_t)}^2=0.
\]
The oracle is constant, so using older observations introduces no temporal bias. Uniform weights $w_i=n^{-1}$, with sample size $n=t-1$, use the full history and, with high probability,
\[
 R_t(\hat h_w)-R_t(h_t^*)\lesssim\frac{p}{n},
\]
which is the usual iid rate because nonstationarity of the noise does not affect the learning problem.

\emph{Discrepancy.} The discrepancy does not vanish. We have
\[
    a_i=\E[Y_i^2]=\beta^{*\top}G_t\beta^*+\sigma_i^2.
\]
\begin{proposition}\label{prop:noise-drift}
It holds
\[
    \mathrm{dis}(P_w,P_t)
    =
    \left|a_w-a_t\right|
    =
    \left|
        \sum_{i=1}^{t-1}w_i\sigma_i^2-\sigma_t^2
    \right|.
\]
\end{proposition}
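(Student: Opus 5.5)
The plan is to substitute the noise-drift moments into the quadratic expansion of the risk difference from Section~\ref{sec:discrepancy-linear}. There we have
\[
    R^w(\beta)-R_t(\beta)
    =
    a_w-a_t
    -
    2\beta^\top(b_w-b_t)
    +
    \beta^\top(G_w-G_t)\beta .
\]
In the noise-drift setting, $G_i=G_t$ and $b_i=G_t\beta^*=b_t$ for every $i<t$. Since the weights satisfy $\sum_{i<t}w_i=1$, it follows that $G_w=\sum_i w_iG_t=G_t$ and $b_w=b_t$. Hence the linear term and the quadratic term in $\beta$ both vanish identically, and $R^w(\beta)-R_t(\beta)=a_w-a_t$ for every $\beta$ with $\|\beta\|_2\le B$.

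The supremum defining $\mathrm{dis}(P_w,P_t)$ is therefore taken over a constant function of $\beta$. This gives $\mathrm{dis}(P_w,P_t)=|a_w-a_t|$ directly, with no optimization over the ball. It does not rely on Proposition~\ref{prop:discrepancy-asymp}, which only provides an equivalence up to constants and needs an interiority assumption.

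It remains to compute $a_w-a_t$. Expanding $Y_i=\beta^{*\top}Z_i+\eta_i$ gives
\[
    a_i=\E[(\beta^{*\top}Z_i)^2]+2\E[\beta^{*\top}Z_i\,\eta_i]+\E[\eta_i^2].
\]
The cross term vanishes by the tower property, because $\E[\eta_i\mid Z_i]=0$. Stationarity of $P_Z$ gives $\E[(\beta^{*\top}Z_i)^2]=\beta^{*\top}G_t\beta^*$ for every $i\le t$, so $a_i=\beta^{*\top}G_t\beta^*+\sigma_i^2$, as stated before the proposition. Weighting and again using $\sum_i w_i=1$, the common term $\beta^{*\top}G_t\beta^*$ cancels in $a_w-a_t$. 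This leaves $\sum_{i<t}w_i\sigma_i^2-\sigma_t^2$, and taking absolute values yields the claim.

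There is no real obstacle here. The only points to check are that the weights sum to one, so that stationary quantities cancel exactly, and that the cross moment vanishes. The latter requires integrability, which is guaranteed by $|Y_i|\le B$ and $\|Z_i\|_2\le 1$; these bounds also make $\eta_i$ bounded.
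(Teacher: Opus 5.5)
Your proof is correct and matches the paper's argument. Since $G_w=G_t$ and $b_w=b_t$, the difference $R^w(\beta)-R_t(\beta)$ is the constant $a_w-a_t$ on all of $\Theta$, so the discrepancy is its absolute value; the paper phrases this as the vertical shift $R_i(\beta)=\sigma_i^2+(\beta-\beta^*)^\top G_t(\beta-\beta^*)$, which is equivalent to your moment computation of $a_i=\beta^{*\top}G_t\beta^*+\sigma_i^2$.
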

This example shows that the discrepancy is nonzero whenever the weighted average of past noise
variances differs from the noise variance at time $t$, although the conditional mean and therefore the optimal predictor remain unchanged.

The reason is that the same constant is added to the risk of every predictor,
\[
 R_i(\beta)=\sigma_i^2+(\beta-\beta^*)^\top G_t(\beta-\beta^*),
\]
yielding a vertical shift in the risk surface, while its shape, and hence its minimizer, is unchanged.
The oracle drift compares the risks of two predictors, so it is invariant to such shifts.

The localized discrepancy detects the shift as well. Since $R^w-R_t$ is constant across $h$, $\mathrm{dis}_\varepsilon^{\mathrm{loc}}(P_w,P_t)=\mathrm{dis}(P_w,P_t)$ for every $\varepsilon\ge0$, so localization alone does not remove this effect.
The shift cancels for a discrepancy defined on risks centered at their respective oracles, that is, by comparing $R^w(h)-R^w(\bar h_w)$ with $R_t(h)-R_t(\bar h_t)$; see, e.g., \citet{hanneke2019value}.
 Here, both quantities equal $(\beta-\beta^*)^\top G_t(\beta-\beta^*)$.
 However, centering does not remove changes in the risk geometry as we will see in Section~\ref{sec:feature-moment-drift}.

Figure~\ref{fig:drift-error-illustrations} illustrates this with the
DGP \(Y_t=\eta_t\), \(\E\eta_t=0\), and increasing variance
\(\sigma_t^2=0.12+2.6(t/200)^2\), combined with exponential smoothing weights
\begin{align*}
    w_{i}^{(t)} = \mathbbm{1}\{1\le i<t\}\frac{\exp[-\alpha(t-i)]}{\sum_{j=1}^{t-1}\exp[-\alpha(t-j)]}, \alpha \in (0, \infty).
\end{align*}
The discrepancy grows while the oracle
drift remains zero. Details are given in Appendix~\ref{app:simulation-setup}.
By Proposition~\ref{prop:noise-drift}, we obtain the rate
\[
 R_t(\hat h_w)-R_t(h_t^*)\lesssim\frac{p}{n}+\mathrm{dis}(P_w,P_t).
\]
If the noise variance changes by a constant amount over the history, the second term is $O(1)$ and does not vanish as $t$ grows, so the bound suggests discarding old observations, yielding a slower rate.
The same holds for the localized discrepancy.

\subsubsection{Feature Drift}\label{sec:feature-moment-drift}
We now consider a shift in the feature distribution, which changes the shape of the risk surface in contrast to the previous setting.

Assume $\E[\eta_i^2]=\sigma^2$ and $\|\beta^*\|_2<B$, while $G_i$ may change over time.

\emph{Oracle drift.} The risk is
\begin{align*}
    R_i(\beta)=\sigma^2+(\beta-\beta^*)^\top G_i(\beta-\beta^*),
\end{align*}
so if $G_w\succ0$, both oracles equal $\beta^*$ and the oracle drift is zero.
The oracle is again constant, and uniform weights give the same rate $p/n$.

\emph{Discrepancy.} The discrepancies do not vanish:
\begin{proposition}\label{prop:feature-moment-drift}
Let $G_w\succ0$ and $\varepsilon\le\lambda_{\min}(G_w)(B-\|\beta^*\|_2)^2$. Then
\begin{equation}\label{eq:geometry-localized-discrepancy}
\mathrm{dis}_\varepsilon^{\mathrm{loc}}(P_w,P_t)
=\varepsilon\|G_w^{-1/2}(G_w-G_t)G_w^{-1/2}\|_{\mathrm{op}}.
\end{equation}
\end{proposition}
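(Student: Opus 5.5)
We compute the localized discrepancy directly from its definition using the quadratic expansion around the weighted oracle given before Proposition~\ref{prop:linear-localized-discrepancy}.

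\emph{Step 1: both oracles and the residual.} In the feature drift setting $R_i(\beta)=\sigma^2+(\beta-\beta^*)^\top G_i(\beta-\beta^*)$, so $R^w(\beta)=\sigma^2+(\beta-\beta^*)^\top G_w(\beta-\beta^*)$. Since $G_w\succ0$ and $\|\beta^*\|_2<B$, the weighted oracle is the unique interior point $\beta_w=\beta^*$. Then $b_i=G_i\beta^*$ for every $i$, and hence
\[
v_w=(G_w-G_t)\beta^*-(G_w-G_t)\beta^*=0 .
\]
The risk-level shift also vanishes, since $R^w(\beta^*)=R_t(\beta^*)=\sigma^2$ gives $\ell_w=0$.

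\emph{Step 2: reduce to a quadratic form.} Write $\beta=\beta^*+\tau$. The expansion from the localized-discrepancy paragraph gives
\[
R^w(\beta)-R_t(\beta)=\tau^\top(G_w-G_t)\tau ,
\]
and the constraint $h\in\Hcal_{w,\varepsilon}$ reads $\tau^\top G_w\tau\le\varepsilon$ together with $\|\beta^*+\tau\|_2\le B$.

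The feasibility constraint is inactive. If $\tau^\top G_w\tau\le\varepsilon$, then
\[
\|\tau\|_2^2\le\varepsilon/\lambda_{\min}(G_w)\le(B-\|\beta^*\|_2)^2 ,
\]
so $\|\beta^*+\tau\|_2\le B$ by the triangle inequality. Hence the supremum runs over the full ellipsoid.

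\emph{Step 3: evaluate the supremum.} Substitute $u=G_w^{1/2}\tau/\sqrt{\varepsilon}$, which ranges over the closed unit ball. This gives
\[
\mathrm{dis}_\varepsilon^{\mathrm{loc}}(P_w,P_t)=\varepsilon\sup_{\|u\|_2\le1}|u^\top M u|,\qquad M=G_w^{-1/2}(G_w-G_t)G_w^{-1/2}.
\]
Since $M$ is symmetric, the supremum of $|u^\top Mu|$ over the unit ball equals the largest absolute eigenvalue, which is $\|M\|_{\mathrm{op}}$. It is attained at a unit eigenvector; the ball constraint is harmless because $|u^\top Mu|$ scales as $\|u\|_2^2$. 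This yields \eqref{eq:geometry-localized-discrepancy} with equality.

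There is no real obstacle here. The only points needing care are checking that the ball constraint $\Theta$ does not cut the ellipsoid, which is exactly the role of the stated bound on $\varepsilon$, and using symmetry of $M$ to identify the quadratic-form supremum with the operator norm.
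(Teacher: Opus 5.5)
Your proof is correct and follows essentially the same route as the paper: identify $\beta_w=\beta^*$, write $R^w(\beta)-R_t(\beta)=\tau^\top(G_w-G_t)\tau$ with $\tau=\beta-\beta^*$, use the bound on $\varepsilon$ to show the whole ellipsoid $\tau^\top G_w\tau\le\varepsilon$ lies in $\Theta$, and finish with the substitution $u=G_w^{1/2}\tau$ and the absolute Rayleigh quotient. (Your normalization by $\sqrt{\varepsilon}$ tacitly assumes $\varepsilon>0$; the case $\varepsilon=0$ is trivial because both sides vanish, and the paper's unnormalized substitution $\|u\|_2^2\le\varepsilon$ covers it directly.)
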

Hence
\begin{align*}
    0=R_t(\bar h_w)-R_t(\bar h_t)<\mathrm{dis}_\varepsilon^{\mathrm{loc}}(P_w,P_t)\le\mathrm{dis}(P_w,P_t)
\end{align*}
whenever $G_w\ne G_t$ and $\varepsilon>0$.
Localization replaces the class radius by $\varepsilon$, but cannot remove the dependence on $G_w-G_t$. Unlike in the previous example, centering does not help either: the excess risks $(\beta-\beta^*)^\top G_w(\beta-\beta^*)$ and $(\beta-\beta^*)^\top G_t(\beta-\beta^*)$ still differ through the feature geometry.
The discrepancy guarantee is instead
\[
 R_t(\hat h_w)-R_t(h_t^*)\lesssim\frac{p}{n}+\|G_w-G_t\|_{\mathrm{op}},
\]
and the localized guarantee replaces the second term by \eqref{eq:geometry-localized-discrepancy}.
The discrepancy term is $O(1)$ if the feature covariance changes by a constant amount. The localized term carries a factor $\varepsilon$, but remains affected by the feature geometry, although the oracle does not change.
Thus noise variance changes the risk level while feature moments change the risk shape. However, neither moves the oracle, which is correctly reflected by the oracle drift being zero. On the other hand, the discrepancies are nonzero in both cases.

\subsubsection{Smooth Drift}\label{sec:smooth-drift}
Assume now that both the feature distribution $Z_i\sim P_{Z,i}$ and the coefficient $\beta_i^*$ may change over time, with $Y_i=\beta_i^{*\top}Z_i+\eta_i$ and $\E[\eta_i\mid Z_i]=0$.
Let $\psi_i(t)=|t-i|^\nu$ and denote
\[
    M_w(\psi;t)=\sum_{i=1}^{t-1}|w_i||t-i|^\nu .
\]
$M_w(\psi;t)$ captures the temporal bias induced by the weights. We compare the oracle drift with the discrepancy-based drift under smoothly drifting distributions.

\emph{Oracle drift.} We impose the following local condition.

\begin{proposition}\label{prop:smooth-local}
Assume that, for some \(L_v<\infty\) and
$\nu > 0$, it holds that
\[
    \|(G_i-G_t)\beta_w-(b_i-b_t)\|_2
    \le
    L_v|t-i|^\nu
    \qquad
    \forall i<t.
\]
Then
\[
    \|\bar h_w-\bar h_t\|_{L_2(P_t)}^2
    \le
    \frac{\Gamma}{\gamma^2}
    L_v^2
    M_w(\psi;t)^2 .
\]
\end{proposition}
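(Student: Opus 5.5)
The plan is to reduce the statement to Proposition~\ref{prop:localnormalcontrol}, which already gives
\[
    \|\bar h_w-\bar h_t\|_{L_2(P_t)}^2\le\frac{\Gamma}{\gamma^2}\|v_w\|_2^2 .
\]
It then only remains to bound $\|v_w\|_2$ by $L_v M_w(\psi;t)$.

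\textbf{Step 1: write $v_w$ as a weighted sum of per-time residuals.} Since $\sum_{i<t}w_i=1$, we have $G_w-G_t=\sum_{i<t}w_i(G_i-G_t)$ and $b_w-b_t=\sum_{i<t}w_i(b_i-b_t)$. Hence
\[
    v_w=\sum_{i=1}^{t-1}w_i\bigl[(G_i-G_t)\beta_w-(b_i-b_t)\bigr].
\]
This step uses the normalization of the weights and linearity in $i$ at the fixed point $\beta_w$. This is exactly why the hypothesis is stated at $\beta_w$ rather than uniformly over $\Theta$.

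\textbf{Step 2: apply the triangle inequality and the assumed per-time bound.} This gives
\[
    \|v_w\|_2\le\sum_{i<t}|w_i|\,\bigl\|(G_i-G_t)\beta_w-(b_i-b_t)\bigr\|_2\le L_v\sum_{i<t}|w_i||t-i|^\nu=L_vM_w(\psi;t).
\]
Squaring and inserting into Proposition~\ref{prop:localnormalcontrol} yields the claim.

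The only point needing care is Proposition~\ref{prop:localnormalcontrol} itself, which we assume. It rests on Theorem~\ref{thm:parametric_oracle_stability_local} with the following choices:
\begin{itemize}
\item $c_w=2\gamma$, from $\langle 2G_t(\beta_w-\beta_t),\beta_w-\beta_t\rangle\ge2\gamma\|\beta_w-\beta_t\|_2^2$;
\item $\Delta_w=2\|v_w\|_2$, since $\nabla R^w(\beta_w)-\nabla R_t(\beta_w)=2v_w$;
\item $L_w=\sqrt{\Gamma}$.
\end{itemize}
These give $\Gamma\cdot4\|v_w\|_2^2/(4\gamma^2)$. No step is a real obstacle; the main thing to check is that the weights need not be nonnegative, which is why $|w_i|$ appears in the triangle inequality.
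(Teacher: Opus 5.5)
Your proposal is correct and follows essentially the same route as the paper: write $v_w=\sum_{i<t}w_i\bigl[(G_i-G_t)\beta_w-(b_i-b_t)\bigr]$ using $\sum_{i<t}w_i=1$, bound $\|v_w\|_2\le L_vM_w(\psi;t)$ by the triangle inequality, and plug this into Proposition~\ref{prop:localnormalcontrol}. One small remark: the paper's weights lie in $[0,1]^{t-1}$, so $|w_i|=w_i$, and the absolute values are harmless rather than necessary.
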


This assumption controls
the normal-equation drift only at the weighted oracle. It can therefore be small
even when \(G_i\) and \(b_i\) drift individually, as long as their effect on the
oracle cancels.
A sufficient condition is smooth drift of the feature covariance and of the coefficient path: if $\|G_i-G_t\|_{\mathrm{op}}\le L_G\psi_i(t)$ and $\|\beta_i^*-\beta_t^*\|_2\le L_\beta\psi_i(t)$ for all $i<t$, the local condition holds with $L_v=2BL_G+\Gamma L_\beta$.
For exponential weights, the effective horizon $s_\alpha=\min\{(1-e^{-\alpha})^{-1},\,n\}$ determines the effective sample size: the estimation error is of order $p/s_\alpha$, while the observations used are on average of age $s_\alpha$, so the temporal bias is $M_w(\psi;t)\lesssim s_\alpha^\nu$.
Suppose the drift constants are of order $\kappa$. The oracle drift then gives
\[
 R_t(\hat h_{w(\alpha)})-R_t(h_t^*)\lesssim\frac{p}{s_\alpha}+\kappa^2s_\alpha^{2\nu}.
\]

\emph{Discrepancy.} We now control the discrepancy drift.

\begin{proposition}\label{prop:smooth-discrepancy}
Assume that, for constants \(L_a,L_b,L_G<\infty\), and for all $i<t$,
\begin{align*}
    &|a_i-a_t|\le L_a\psi_i(t),
    \qquad
    \|b_i-b_t\|_2\le L_b\psi_i(t),
    \\
    &\|G_i-G_t\|_{\mathrm{op}}\le L_G\psi_i(t).
\end{align*}
Then
\[
    R_t(\bar h_w)-R_t(\bar h_t)
    \le
    2(L_a+B^2L_G+2BL_b)M_w(\psi;t).
\]
\end{proposition}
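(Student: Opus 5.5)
The plan is to go through the discrepancy bound \eqref{eq:decomposition_discrepancy}, which gives $R_t(\bar h_w)-R_t(\bar h_t)\le 2\,\mathrm{dis}(P_w,P_t)$. It then suffices to show
\[
\mathrm{dis}(P_w,P_t)\le (L_a+B^2L_G+2BL_b)\,M_w(\psi;t).
\]
For this I will use the explicit quadratic form of the risk difference recorded in Section~\ref{sec:discrepancy-linear}:
\[
R^w(\beta)-R_t(\beta)=a_w-a_t-2\beta^\top(b_w-b_t)+\beta^\top(G_w-G_t)\beta .
\]
For any $\|\beta\|_2\le B$, the triangle inequality, Cauchy--Schwarz and the definition of the operator norm give
\[
|R^w(\beta)-R_t(\beta)|\le |a_w-a_t|+2B\|b_w-b_t\|_2+B^2\|G_w-G_t\|_{\mathrm{op}} .
\]
Taking the supremum over the ball yields the same bound for $\mathrm{dis}(P_w,P_t)$.

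Next I reduce each weighted moment difference to the per-time assumptions. Since $\sum_{i<t}w_i=1$, we have $a_w-a_t=\sum_{i<t}w_i(a_i-a_t)$, and likewise for $b_w-b_t$ and $G_w-G_t$. Convexity of the norm, or simply the triangle inequality with $|w_i|$, then gives
\begin{align*}
|a_w-a_t|&\le \sum_{i<t}|w_i|\,|a_i-a_t|\le L_a M_w(\psi;t),\\
\|b_w-b_t\|_2&\le L_b M_w(\psi;t),\\
\|G_w-G_t\|_{\mathrm{op}}&\le L_G M_w(\psi;t).
\end{align*}
Substituting these into the previous display and multiplying by $2$ gives the claimed bound, with constant $2(L_a+B^2L_G+2BL_b)$.

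There is no real obstacle here. The only points to check are the following. First, \eqref{eq:decomposition_discrepancy} is valid here: it uses $R_t(\bar h_w)-R^w(\bar h_w)+R^w(\bar h_w)-R^w(\bar h_t)+R^w(\bar h_t)-R_t(\bar h_t)$, where the middle term is nonpositive and each outer term is at most $\mathrm{dis}$. Second, the normalization $\sum w_i=1$ is what allows $a_t$, $b_t$ and $G_t$ to be absorbed inside the weighted sums.
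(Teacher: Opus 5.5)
Your proposal is correct and matches the paper's proof: it writes $R^w(\beta)-R_t(\beta)$ in moment form, bounds $|a_w-a_t|$, $\|b_w-b_t\|_2$ and $\|G_w-G_t\|_{\mathrm{op}}$ by the respective constants times $M_w(\psi;t)$ using $\sum_{i<t}w_i=1$ and the triangle inequality, takes the supremum over $\|\beta\|_2\le B$, and then applies $R_t(\bar h_w)-R_t(\bar h_t)\le 2\,\mathrm{dis}(P_w,P_t)$. The only difference is the order in which you bound the risk difference and substitute the per-time drift bounds, which is immaterial.
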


This condition is more conservative because it requires smoothness of all moment
components, while only yielding a first-order dependence on \(M_w(\psi;t)\), with
\[
 R_t(\hat h_{w(\alpha)})-R_t(h_t^*)\lesssim\frac{p}{s_\alpha}+\kappa s_\alpha^{\nu}.
\]
The bounds differ only in how the temporal bias enters: the oracle drift is quadratic, as in a classical bias--variance decomposition, whereas the discrepancy is the bias itself.
For the localized discrepancy, \eqref{eq:decomposition_localized} additionally requires $\bar h_t\in\Hcal_{w,\varepsilon}$, and the risk-level shift $|\ell_w|$ in Proposition~\ref{prop:linear-localized-discrepancy} is first order and does not shrink with $\varepsilon$, so the same bound applies.

Figure~\ref{fig:drift-error-illustrations} illustrates this contrast for the
scalar intercept model \(Y_t=\beta_t^*+\eta_t\), with a smooth seasonal
coefficient path and exponential weights on the past observations. Panel (d)
compares the discrepancy with the oracle drift.  The
exact simulation setup is given in
Appendix~\ref{app:simulation-setup}.

\emph{Slow drift.} For small $\kappa$, balancing estimation and drift gives the horizons
\[
 s_\alpha^*\asymp(p/\kappa^2)^{1/(2\nu+1)}
 \quad\text{and}\quad
 \tilde s_\alpha^*\asymp(p/\kappa)^{1/(\nu+1)},
\]
with rates $p^{2\nu/(2\nu+1)}\kappa^{2/(2\nu+1)}$ and $p^{\nu/(\nu+1)}\kappa^{1/(\nu+1)}$, respectively.
The oracle drift therefore permits a longer horizon and gives a faster rate.
Intuitively, a small drift has an even smaller squared effect on the excess risk; a first-order bound overstates it and prescribes weights that discard informative observations.

\emph{Fast drift.} For large $\kappa$, the oracle can move by more than a constant within the horizon, $\kappa s_\alpha^\nu>1$, and then the squared term becomes dominant.
Since $R_t(\bar h_w)-R_t(\bar h_t)\le2\,\mathrm{dis}(P_w,P_t)$ by \eqref{eq:decomposition_discrepancy}, both bounds apply to the same drift term, so
\[
 R_t(\hat h_{w(\alpha)})-R_t(h_t^*)\lesssim\frac{p}{s_\alpha}+\min\{\kappa^2s_\alpha^{2\nu},\,\kappa s_\alpha^{\nu}\}.
\]
The minimum is the squared term for $\kappa s_\alpha^\nu\le1$ and the first-order term otherwise, so the combined guarantee is never worse than the discrepancy guarantee.
Moreover, at the horizon minimizing $p/s_\alpha+\kappa^2s_\alpha^{2\nu}$, balancing gives $\kappa(s_\alpha^*)^\nu=(p/s_\alpha^*)^{1/2}\asymp(\kappa p^{\nu})^{1/(2\nu+1)}$, which is smaller than one whenever the guarantee is nontrivial.
The reversed regime thus only arises if $\kappa^2\gtrsim p^{-2\nu}$, where every horizon gives a bound of order one and the minimum is attained by the first-order discrepancy term.

\paragraph{Bias--Variance Trade-off}
\begin{figure}[t]
    \centering
    \includegraphics[width=0.78\linewidth]{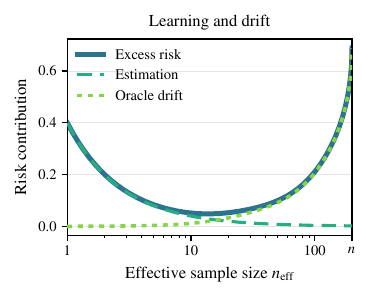}
    \caption{Bias--variance trade-off for exponential weights in the smooth-drift
    setting.}
    \label{fig:smooth-bias-variance-tradeoff}
\end{figure}
Figure~\ref{fig:smooth-bias-variance-tradeoff} shows the
oracle drift, estimation error, and excess risk of the weighted
estimator under a smoothly drifting oracle. Nearly uniform weights attain the maximal
effective sample size \(n_{\mathrm{eff}}=n\) and minimize estimation error,
but retain the longest history and can therefore induce substantial temporal
bias. Concentrating on recent observations reverses these effects
(\(n_{\mathrm{eff}}=1\)). Thus neither extreme is automatically optimal: the
complete excess risk balances statistical error against drift.

\section{Nonparametric Regression}
\label{sec:nonparametric-extension}

Unlike the fixed linear class, nonparametric regression allows the approximation space to grow with the data. Consider $Y_i=h_i^*(Z_i)+\eta_i$ with $\E[\eta_i\mid Z_i]=0$.
Let $\Phi_q:\mathcal Z\to\mathbb R^q$ be a feature map and consider the
finite-dimensional approximation class
\[
    \Hcal_q
    =
    \left\{
        h_\beta(z)=\beta^\top\Phi_q(z):
        \beta\in\Theta_q
    \right\},
\]
where $\Theta_q\subset\mathbb R^q$ is convex. For each fixed $q$, the
analysis of Section~\ref{sec:linear_models} applies after replacing
$Z$ by $\Phi_q(Z)$. The number of basis functions $q$ now affects both approximation and oracle drift.
For equal-width $q$-bin histograms on $[0,1)$, assume $\|h_i^*\|_\infty\le B$ and take $\Theta_q=\Theta_q^{\mathrm{hist}}=[-B,B]^q$.
Define
\[
    G_{i,q}
    =
    \E[\Phi_q(Z_i)\Phi_q(Z_i)^\top],
    \qquad
    b_{i,q}
    =
    \E[\Phi_q(Z_i)Y_i],
\]
and let $G_{w,q}$ and $b_{w,q}$ denote their weighted versions. Write
$\beta_{w,q}$ and $\beta_{t,q}$ for the weighted and in-class
oracles. The resulting local perturbation is
\[
    v_{w,q}
    =
    (G_{w,q}-G_{t,q})\beta_{w,q}
    -(b_{w,q}-b_{t,q}).
\]

\paragraph{Drift for Fixed $q$}
Suppose the class oracles are interior and $\gamma_q I_q\preceq G_{t,q}\preceq\Gamma_q I_q$ for $0<\gamma_q\leq\Gamma_q<\infty$. By the same argument as in
Proposition~\ref{thm:driftoptimality}, we have
\[
    \frac{1}{\Gamma_q}\|v_{w,q}\|_2^2
    \leq
    \|\bar h_{w,q}-\bar h_{t,q}\|_{L_2(P_t)}^2
    \leq
    \frac{1}{\gamma_q}\|v_{w,q}\|_2^2.
\]
As in linear regression, only the combined change in $G_{w,q}$ and $b_{w,q}$ moves the oracle, and its risk contribution is quadratic. The choice of $q$ also changes the projection and conditioning, so it can affect the size of this drift.
Moreover, the discrepancy can remain first order and react to noise-variance changes even when the oracle is fixed (see Appendix~\ref{app:nonparametric} for details).

\paragraph{Weights under Smooth Drift}
Denote the approximation term by
$A_t(q)=R_t(\bar h_{t,q})-R_t(h_t^*)$. Under exponential weights, we obtain
\begin{equation*}
 R_t(\hat h_{w(\alpha),q})-R_t(h_t^*)
 \lesssim \frac{q}{s_\alpha}+D_qs_\alpha^{2\nu}+A_t(q),
\end{equation*}
where $D_q$ captures how the drift of the $q$-dimensional in-class oracle
depends on the chosen basis resolution.
Thus weights and resolution must balance all three terms. Unlike in the
fixed-dimensional linear model, the resolution $q$ is itself a tuning
parameter: increasing $q$ reduces approximation error but increases
estimation error and can also change the projected drift through $D_q$.
Hence the temporal horizon and model resolution should generally be chosen
jointly. For one-dimensional Lipschitz histograms,
$A_t(q)\lesssim q^{-2}$ and $q\asymp s_\alpha^{1/3}$ balances estimation and
approximation for a fixed horizon. This conditional choice may not be
optimal once drift is included: if $D_q$ grows with $q$, the drift term
favors a coarser basis and shifts the joint optimum.
Figure~\ref{fig:basis-resolution-tradeoff} illustrates this for histograms under a smoothly drifting regression function: estimation grows with $q$, approximation decays, and the oracle drift increases as finer bins resolve more of the drift. Details are given in Appendix~\ref{app:simulation-setup}.
\begin{figure}[t]
    \centering
    \includegraphics[width=0.78\linewidth]{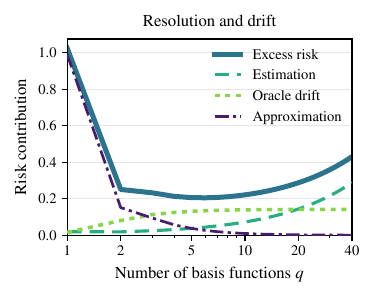}
    \caption{Resolution trade-off for histogram bases with exponential weights in the smooth-drift setting.}
    \label{fig:basis-resolution-tradeoff}
\end{figure}

\section{Discussion}\label{sec:discussion}

Our results give a direct way to control oracle drift in parametrized classes under local conditions around the two oracles.
The comparison is pointwise, requires no supremum over the function class, and does not rely on global geometric conditions on the parameter space.
Combined with our estimation results, this yields explicit excess-risk guarantees that separate statistical error from the effect of distributional drift.

For linear regression, the resulting drift contribution has the optimal quadratic scaling, while discrepancies can remain first order and can react to changes that leave the oracle unchanged.
This distinction persists even when the risks are centered at their oracles, because centering does not remove changes in the risk geometry.
The same perspective extends to growing approximation classes in nonparametric regression, where estimation, oracle drift, and approximation must be balanced jointly.

An important but orthogonal question to our drift analysis is data-driven selection of the weights.
This requires additional information or assumptions about the target; discrepancy-based algorithms likewise rely on conditions such as a recent window being close to the future \citep{KuznetsovMohri2020} or labeled target data \citep{pmlr-v206-awasthi23b}, while related work develops adaptive weighting or look-back selection under additional structural assumptions \citep{huangWang2025stability,baby2025adaptive}.
Such conditions make data-driven selection possible, but this is separate from whether the drift quantity being used is closely tied to the movement of the prediction oracle.
Our analysis addresses the latter question for a given weighting scheme.
Adaptive procedures targeted at the oracle drift and more general drift models, such as structural breaks, are natural directions for future research.

\label{main-text-end}

\subsection*{AI use statement}
In this work, we used generative AI tools to assist with deriving and checking proofs of theoretical results and with implementing the simulations.
The proof assistance was used in particular for the results on linear and nonparametric regression.
Additionally, we used generative AI tools for standard editing and writing tasks.
We have reviewed all AI-assisted work: all AI-assisted proofs were checked line by line by the authors, and the simulation code was verified by the authors.
We take responsibility for the final content of this work, including text, claims, or artifacts produced with the aid of generative AI.

\bibliographystyle{apalike}
\bibliography{bibliography}

\clearpage
\appendix
\thispagestyle{empty}

\onecolumn
\aistatstitle{Beyond Discrepancy: Oracle Drift in Nonstationary Weighted Risk Minimization\\
Supplementary Materials}

\section{Extended Related Work}\label{sec:related_work}

Learning under nonstationarity is often analyzed through discrepancy-type quantities that compare expected losses under past and target distributions over a hypothesis class.
This perspective is closely related to discrepancy in domain adaptation \citep{mansour2009domain} and has been used to derive guarantees for drifting or nonstationary processes \citep{MohriMunozMedina2012,KuznetsovMohri2016,KuznetsovMohri2017,KuznetsovMohri2020,HannekeYang2019,pmlr-v206-awasthi23b,mazzetto2023adaptivealgorithmlearningunknown}.
These analyses concern different discrepancies and comparators; here we focus on the target excess risk of the weighted in-class oracle.
Discrepancy-type quantities appear both for marginal risks and for conditional expected losses given the past, and generalization bounds may only require a one-sided comparison of the two risks.
The add-and-subtract argument behind \eqref{eq:decomposition_discrepancy} produces risk differences in opposite directions, so the symmetric version with an absolute value appears naturally in our setting.

Localization enters this literature through several mechanisms.
The risk discrepancy can be restricted to a subset of the class, as in the localized discrepancy discussed in Section~\ref{sec:decompositions}.
Alternatively, excess risks relative to the respective minimizers can be compared, which removes additive shifts of the risk surface; such comparisons appear in transfer learning, e.g., \citet{hanneke2019value} show limitations of localizing to small source excess risk.
In domain adaptation, \citet{zhang2020localized} define localized $\Hcal\Delta\Hcal$ disagreement and disparity discrepancies for binary classification, and \citet{demathelin2022discrepancy} localize a class of Lipschitz predictors by agreement with observed labels, for losses satisfying a triangle inequality.
Discrepancies are also studied in regression: \citet{mansour2009domain} consider general losses, and \citet{cortes2011regression} give regression guarantees, including results for squared loss.

Data-dependent discrepancy procedures require additional information or stability assumptions, such as a recent window that is representative of the unseen future \citep{KuznetsovMohri2020} or labeled target data \citep{pmlr-v206-awasthi23b}; weaker supervision is possible under additional compatibility assumptions.

Weighted ERM provides a natural way to adapt to temporal drift by trading off variance against bias.
Several recent works study this trade-off through adaptive weighting, look-back selection, or temporally robust model selection \citep{mazzetto2023adaptivealgorithmlearningunknown,huangWang2025stability,temporalshifts,baby2025adaptive,keehan2025wasserstein}.
Our work is complementary. Rather than focusing on the selection of a specific weighting rule, we study how the excess risk should be decomposed for weighted risk minimization.
Our results hold uniformly over a weight class $\Wcal$, but do not focus on the specific choice of the weights.

The estimation part of our analysis is related to fast-rate learning theory for dependent data.
Oracle inequalities and fast rates under mixing assumptions have been established for stationary or weakly dependent processes \citep{SteinwartChristmann2009,HangSteinwart2014,HangSteinwart2017,FarahmandSzepesvari2012,BarreraGobet2021,AlquierLiWintenberger2013,DebMukherjee2024}.
More recently, \citet{brock2026fast} prove fast-rate oracle inequalities for nonstationary weighted ERM, uniformly over weight classes and with rates depending explicitly on the effective sample size.
We use this estimation analysis to combine in-class oracle drift with target approximation.

\section{Proof of Main Results}\label{app:cr-fast-full}

\subsection{Oracle Drift}

\label{app:proof:parametric_oracle_stability_local}

\begin{proof}[Proof of Theorem~\ref{thm:parametric_oracle_stability_local}]
Since the segment between $\theta_w$ and $\theta_t$ lies in $\Theta$ and $\theta_w,\theta_t$ are minimizers,
\[
    \langle\nabla R^w(\theta_w),\theta_t-\theta_w\rangle\ge0,
    \qquad
    \langle\nabla R_t(\theta_t),\theta_w-\theta_t\rangle\ge0.
\]
By the curvature condition,
\begin{align*}
c_w\|\theta_w - \theta_t\|_2^2
&\le
\langle
\nabla R_t(\theta_w)-\nabla R_t(\theta_t),
\theta_w-\theta_t
\rangle \\
&=
\langle
\nabla R_t(\theta_w)-\nabla R^w(\theta_w),
\theta_w-\theta_t
\rangle
+
\langle
\nabla R^w(\theta_w),
\theta_w-\theta_t
\rangle
-
\langle
\nabla R_t(\theta_t),
\theta_w-\theta_t
\rangle \\
&\leq \langle
\nabla R_t(\theta_w)-\nabla R^w(\theta_w),
\theta_w-\theta_t
\rangle \\
&\le
\|\nabla R_t(\theta_w)-\nabla R^w(\theta_w)\|_2\|\theta_w-\theta_t\|_2 \\
&\le
\Delta_w\|\theta_w-\theta_t\|_2,
\end{align*}
where the last two inequalities follow from Cauchy--Schwarz and by assumption.
Thus
\[
    \|\bar h_w-\bar h_t\|_{L_2(P_t)}
    =
    \|h_{\theta_w}-h_{\theta_t}\|_{L_2(P_t)}
    \le
    L_w\|\theta_w-\theta_t\|_2
    \le
    \frac{L_w}{c_w}\Delta_w.
\]
Squaring both sides gives the result.
\end{proof}

\subsection{Estimation Error}

\subsubsection{Mixing coefficients}\label{sec:mixing}

\begin{definition}[$\beta$-mixing]\label{def:beta}
For $k\in \mathbb{N}_0$, let $\mathcal{A}_t = \sigma\{X_j : j\le t\}$ and
$\mathcal{B}_t = \sigma\{X_j : j\ge t\}$. The $\beta$-mixing coefficient at
lag $k \ge 0$ is defined as
\begin{align*}
    \beta(k)
    =
    \sup_{t\geq1}\,
    \frac{1}{2}
    \sup_{\{A_i\}_i \in \Pi_t,\{B_j\}_j \in \Pi'_{t+k}}
    \sum_{i=1}^{I}\sum_{j=1}^{J}
    |\mathbb{P}(A_i\cap B_j)-\mathbb{P}(A_i)\mathbb{P}(B_j)|,
\end{align*}
where $\Pi_t$ and $\Pi'_{t+k}$ denote the sets of all finite partitions of
$\mathcal{A}_t$ and $\mathcal{B}_{t+k}$, respectively.
\end{definition}

\begin{definition}[$\rho$-mixing]\label{def:rho}
The $\rho$-mixing coefficient at lag $k \ge 0$ is defined as
\begin{align*}
    \rho(k)
    =
    \sup_{t \ge 1}\sup_{f,g}
    \frac{|\mathrm{Cov}(f(X_t),g(X_{t+k}))|}
    {\sqrt{\mathrm{Var}(f(X_t))\mathrm{Var}(g(X_{t+k}))}},
\end{align*}
with the inner supremum taken over all functions for which the ratio is
well-defined. Its long-run characteristic is
$K_\rho = 1 + 2\sum_{k=1}^\infty\rho(k)$.
\end{definition}

\subsubsection{Coupling and concentration for \texorpdfstring{$\beta$}{beta}-mixing processes}\label{app:coupling}
Let $X_1,\ldots,X_{n}$, with $n=t-1$, be the estimation sample, and define the weighted empirical process $\mathbb G_n^wf=\sum_{i=1}^{n}w_i\{\E f(X_i)-f(X_i)\}$. Divide the sample into alternating
blocks of size $m\in\mathbb N$, assuming without loss of generality that
$2m$ divides $n$. By maximal coupling
\citep[e.g.,][Theorem~5.1]{rio2017asymptotic}, there are blocks $U_j^*$ with
the same marginal laws as the original blocks $U_j$, such that the even and
odd sequences of coupled blocks are each independent and
\[
 \Pr(\exists j:U_j\ne U_j^*)\le \frac{n}{m}\beta(m).
\]
Write $\widetilde{\mathbb G}_n^{w,m}$ for $\mathbb G_n^w$ with the original blocks replaced by their coupled versions. Here, $N_1(\epsilon,\Wcal)$ and $N_\infty(\epsilon,\Hcal)$ denote the $\epsilon$-covering numbers of $\Wcal$ with respect to $\|\cdot\|_1$ and of $\Hcal$ with respect to $\|\cdot\|_\infty$, respectively. The following are
the fixed-class covering and concentration bounds used below; they are the
corresponding steps in the proof of Theorem~4.2 of
\citet{brock2026fast}.

\begin{lemma}\label{lem:covering}
Use the notation of Theorem~\ref{thm:cr-fast-full} below and define, for
$k\ge1$,
\begin{align*}
 H'_{k,w}&=\{(h,h')\in\Hcal^2:\|h-h'\|_{L_2(\Pcal)}
 \le C_\Pcal2^{k/2+1}r(\|w\|_2)\},\\
 \overline H'_{k,w}&=\{(h,h')\in\Hcal^2:\|h-h'\|_{L_2(\Pcal)}
 \le C_\Pcal2^{(k+3)/2}r(\|w\|_2)\}.
\end{align*}
For any random variables $X_1,\ldots,X_{n}$, it holds that
\begin{align*}
&\Pr\left(\sup_{w\in\Wcal,(h,h')\in H'_{k,w}}
r(\|w\|_2)^{-2}|\mathbb G_n^w(L_h-L_{h'})|>2^k-\frac14\right)\\
&\quad\le N_1(\epsilon_\Wcal,\Wcal)\sup_{w\in\Wcal}\Bigg\{
N_\infty(\epsilon_w,\Hcal)^2
\sup_{(h,h')\in\overline H'_{k,w}}
\Pr\left(r(\|w\|_2)^{-2}|\mathbb G_n^w(L_h-L_{h'})|>2^{k-1}\right)\Bigg\}.
\end{align*}
\end{lemma}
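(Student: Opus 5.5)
This is a discretization-plus-union-bound argument: replace the supremum over $\Wcal$ and over pairs in $H'_{k,w}$ by a maximum over finite nets, paying a deterministic approximation cost. That cost is absorbed by the slack between the thresholds $2^k-\tfrac14$ and $2^{k-1}$, and by the enlargement from $H'_{k,w}$ to $\overline H'_{k,w}$. No property of the law of $X_1,\ldots,X_n$ is used, which is why the bound holds for arbitrary random variables.

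\textbf{Step 1 (net over weights).} Fix a minimal $\epsilon_\Wcal$-net $\{w^{(1)},\ldots,w^{(N_1)}\}$ of $\Wcal$ in $\|\cdot\|_1$. For $w$ with $\|w-w^{(j)}\|_1\le\epsilon_\Wcal$, the loss differences $L_h-L_{h'}$ are bounded (by \eqref{cond:cr-loss-upper}, say by $2M$). Hence
\[
|\mathbb G_n^w f-\mathbb G_n^{w^{(j)}} f|\le 2\|f\|_\infty\|w-w^{(j)}\|_1\le 4M\epsilon_\Wcal .
\]
Using the condition that $r$ is regular in $\|w\|_2$ (from \eqref{cond:cr-simplify}; note $\|w\|_2$ moves by at most $\epsilon_\Wcal$), we can relate $r(\|w\|_2)$ and $r(\|w^{(j)}\|_2)$ up to a factor close to $1$. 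The localization radius $C_\Pcal2^{k/2+1}r(\|w\|_2)$ is then contained in the enlarged radius $C_\Pcal2^{(k+3)/2}r(\|w^{(j)}\|_2)$, since the enlargement factor is $\sqrt2$. Choosing $\epsilon_\Wcal$ small relative to $r(\cdot)^2$ makes the error at most $\tfrac18 r^2$.

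\textbf{Step 2 (net over $\Hcal$).} For each net weight $w$, take an $\epsilon_w$-net of $\Hcal$ in $\|\cdot\|_\infty$ and replace $(h,h')$ by net points $(g,g')$. Lipschitzness of the loss in the prediction (again from the assumptions) gives
\[
|\mathbb G_n^w(L_h-L_{h'})-\mathbb G_n^w(L_g-L_{g'})|\lesssim\epsilon_w .
\]
Choose $\epsilon_w\asymp r(\|w\|_2)^2$, so this error is at most $\tfrac18 r^2$. By the triangle inequality, $\|g-g'\|_{L_2(\Pcal)}\le\|h-h'\|_{L_2(\Pcal)}+2\epsilon_w$, so $(g,g')\in\overline H'_{k,w}$ once $2\epsilon_w$ is below the gap $(\sqrt2\cdot 2^{k/2+1}-2^{k/2+1})C_\Pcal r$. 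This holds for small $\epsilon_w$ because $r\lesssim1$.

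\textbf{Step 3 (union bound).} Steps 1 and 2 show that the event on the left-hand side implies that, for some net $w$ and some net pair in $\overline H'_{k,w}$,
\[
r^{-2}|\mathbb G_n^w(L_g-L_{g'})|>2^k-\tfrac14-\tfrac14\ge 2^{k-1},
\]
where the last inequality uses $k\ge1$. A union bound over at most $N_1(\epsilon_\Wcal,\Wcal)\,N_\infty(\epsilon_w,\Hcal)^2$ pairs, with the supremum over $w$ taken to handle the $w$-dependent inner nets, gives the claim.

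The main obstacle is Step 1. The localization sets and normalizations depend on $w$ through $r(\|w\|_2)$, and moving to a net weight changes both. The constants have to be chosen so that the radius enlargement by the factor $\sqrt2$ and the threshold slack of $\tfrac14$ absorb these changes simultaneously. This relies on the precise form of \eqref{cond:cr-simplify}, and in particular on the specific choice of $\epsilon_\Wcal$ made there.
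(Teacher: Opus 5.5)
Your overall route is the right one: an $\ell_1$-net of $\Wcal$, then for each net weight an $\epsilon_w$-net of $\Hcal$ in sup-norm, a deterministic transfer of the event, and a union bound. The paper does not prove this lemma itself; it takes it from the corresponding step of Theorem~4.2 in \citet{brock2026fast}, and the shape of the bound ($N_1(\epsilon_\Wcal,\Wcal)$, $N_\infty(\epsilon_w,\Hcal)^2$, the $\sqrt2$-enlarged radius) is exactly this argument.

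\textbf{The gap.} It lies in the only quantitative part. The radii are not yours to choose. The statement fixes $\epsilon_w=\|w\|_2^2/(32C_1)$, and $\epsilon_\Wcal$ is fixed in Theorem~\ref{thm:cr-fast-full}. So ``choose $\epsilon_w\asymp r(\|w\|_2)^2$'' and ``choose $\epsilon_\Wcal$ small relative to $r^2$'' prove a different lemma. To show the fixed radii are negligible you need a \emph{lower} bound on $r$, and none of the facts you invoke gives one:
\begin{itemize}
\item $K$-Lipschitzness only gives $|r(\|w\|_2)-r(\|w^{(j)}\|_2)|\le K\epsilon_\Wcal$. This says nothing about the ratio $r(\|w\|_2)/r(\|w^{(j)}\|_2)$ unless $r$ is bounded away from zero.
\item $r\lesssim1$ is not assumed anywhere, and it is not what you need.
\item Several references are misattributed. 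Boundedness and Lipschitzness of the loss are \eqref{cond:cr-simplify}, not \eqref{cond:cr-loss-upper}. Monotonicity and Lipschitzness of $r$, and the definition of $\epsilon_\Wcal$, are hypotheses of Theorem~\ref{thm:cr-fast-full}, not of \eqref{cond:cr-simplify}.
\end{itemize}
The missing ingredient is the theorem's rate condition. Since $K_w\ge4$, $C_\Pcal\ge1$ and $K_\rho\ge1$, it yields $r(\|w\|_2)^2\ge4\|w\|_2^2\ge4C_\Wcal^2$ for every $w\in\Wcal$.

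\textbf{How the steps close.} With this bound everything works, provided the nets are internal. Net weights must lie in $\Wcal$, so that $r(\|w^{(j)}\|_2)$ is defined and the rate condition applies there. Net hypotheses must lie in $\Hcal$, so that pairs land in $\overline H'_{k,w^{(j)}}\subseteq\Hcal^2$. Write $r_j=r(\|w^{(j)}\|_2)$. Then:
\begin{itemize}
\item $K\epsilon_\Wcal\le C_\Wcal/64\le r_j/128$, so $r(\|w\|_2)$ and $r_j$ agree up to a factor $1\pm\tfrac1{128}$.
\item Changing the weights costs at most $4\epsilon_\Wcal\le r_j^2\epsilon_\Wcal/C_\Wcal^2\le r_j^2/64$.
\item The hypothesis net costs at most $4C_1\epsilon_{w^{(j)}}=\|w^{(j)}\|_2^2/8\le r_j^2/32$.
\item $2\epsilon_{w^{(j)}}\le\|w^{(j)}\|_2/16\le r_j/32$, because $\|w\|_2\le\|w\|_1\le C_1$. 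Together with $r(\|w\|_2)\le(1+\tfrac1{128})r_j$, this gives membership in $\overline H'_{k,w^{(j)}}$ with no assumption $r\lesssim1$.
\end{itemize}

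\textbf{The threshold accounting in Step~3.} You treat all errors as additive against a slack of $\tfrac14$. But renormalizing from $r(\|w\|_2)^{-2}$ to $r_j^{-2}$ is multiplicative on a quantity of size $2^k$, so the $\tfrac14$ cannot absorb it for large $k$. It is absorbed by the factor-two gap between the two thresholds: $(2^k-\tfrac14)(1-\tfrac1{64})-\tfrac1{64}-\tfrac1{32}\ge2^{k-1}$ for all $k\ge1$.
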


\begin{lemma}\label{lem:bernstein}
If $\max_{1\le i\le n}|f(X_i)|\le b$ almost surely and
$\|f\|_{L_2(\Pcal)}^2\le v$, then, for every $s>0$,
\[
 \Pr\bigl(|\widetilde{\mathbb G}_n^{w,m}f|>s\bigr)
 \le4\exp\left(-\frac{s^2}{8v\|w\|_2^2K_\rho
 +3mb\|w\|_\infty s}\right).
\]
\end{lemma}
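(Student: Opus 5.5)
The plan is to run the coupling-plus-Bernstein approach. We work on the event where all blocks agree with their coupled versions, so that $\mathbb G_n^w f=\widetilde{\mathbb G}_n^{w,m}f$. The coupled process is a sum of two processes, one over the even blocks and one over the odd blocks. Each of these is a sum of independent block sums.

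\textbf{Step 1: reduce to the two block sequences.} We bound the tail of $\widetilde{\mathbb G}_n^{w,m}f$ by the tails of the even and odd parts, each at level $s/2$. A union bound over the two parts accounts for the factor $4=2\cdot2$, coming from two parts and two-sided tails.

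\textbf{Step 2: control each block term.} Write the even part as $\sum_j \xi_j$, where $\xi_j=\sum_{i\in \text{block } j} w_i\{\E f(X_i)-f(X_i)\}$. These summands are independent and centered, because each coupled block has the same marginal law as the original block. They are bounded by $|\xi_j|\le 2mb\|w\|_\infty$.

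\textbf{Step 3: bound the variance with $\rho$-mixing.} Within a block, we expand $\mathrm{Var}(\xi_j)$ as a double sum of covariances. Each covariance is bounded by $\rho(|i-k|)\sqrt{\mathrm{Var} f(X_i)\mathrm{Var} f(X_k)}\le \rho(|i-k|)\,v$. Using $w_iw_k\le (w_i^2+w_k^2)/2$ and summing over lags gives $\mathrm{Var}(\xi_j)\le K_\rho v\sum_{i\in\text{block } j}w_i^2$. Summing over blocks then gives total variance at most $K_\rho v\|w\|_2^2$.

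\textbf{Step 4: apply Bernstein and match constants.} Bernstein's inequality for independent bounded summands yields
\[
2\exp\!\left(-\frac{(s/2)^2}{2K_\rho v\|w\|_2^2+\tfrac{2}{3}(2mb\|w\|_\infty)(s/2)}\right)
=2\exp\!\left(-\frac{s^2}{8K_\rho v\|w\|_2^2+\tfrac{8}{3}mb\|w\|_\infty s}\right).
\]
Since $\tfrac83\le 3$, this is at most the stated bound, with the factor $4$ from the union bound over the two parts.

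The main obstacle is Step 3. The $\rho$-mixing coefficient is defined for the original process, and we must transfer it to the coupled blocks. This works because each coupled block has the same law as the original block, so within-block covariances are unchanged. We also need the weighted covariance sum to produce exactly $K_\rho$, which requires the AM–GM step above together with counting each lag at most twice, consistent with $K_\rho=1+2\sum_k\rho(k)$.
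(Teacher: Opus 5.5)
Your proposal is correct and follows the standard blocking argument that the paper invokes by citing \citet{brock2026fast} (the paper gives no proof of its own). You split the coupled process into even and odd parts, each a sum of independent block sums. You bound each block by $2mb\|w\|_\infty$ and the total variance by $K_\rho v\|w\|_2^2$ via $\rho$-mixing and $|w_iw_k|\le (w_i^2+w_k^2)/2$, then apply Bernstein at level $s/2$, which yields the constant $8/3\le 3$.

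Drop the opening remark about working on the event $\{U_j=U_j^*\ \forall j\}$. The lemma is an unconditional statement about $\widetilde{\mathbb G}_n^{w,m}f$, and conditioning on that event would destroy the block independence you rely on. The transfer back to $\mathbb G_n^w$ is handled separately, through the $\frac{n}{m}\beta(m)$ term, in the proof of Theorem~\ref{thm:cr-fast-full}.
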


\subsubsection{Assumptions}\label{app:assumptions}

For the weight class $\Wcal$, assume that every $P_w$ is a probability measure and define $\Pcal=\{P_1,\ldots,P_t\}\cup\{P_w:w\in\Wcal\}$.

\begin{itemize}
    \Aconditem\label{cond:cr-loss-upper}
    There is \(C_\Pcal<\infty\) such that for all \(h,h'\in\Hcal\),
    \[
        \sup_{P\in\Pcal}\|h-h'\|_{L_2(P)}
        \le
        C_\Pcal\inf_{P\in\Pcal}\|h-h'\|_{L_2(P)}.
    \]

    \Aconditem\label{cond:cr-infty}
    There is \(C_\infty\ge0\) such that for all \(h,h'\in\Hcal\),
    \[
        \inf_{P\in\Pcal}\|h-h'\|_{L_2(P)}
        \ge
        C_\infty\|h-h'\|_\infty.
    \]
    We use the convention $C_\infty^{-1}=\infty$ when $C_\infty=0$.

    \Aconditem\label{cond:cr-bernstein}
    For every \(w\in\Wcal\) and
    all \(h\in\Hcal\),
    \[
        R^w(h)-R^w(\bar h_w)
        \ge
        \|h-\bar h_w\|_{L_2(P_w)}^2.
    \]

    \Aconditem\label{cond:cr-simplify}
    The loss \(L\) is uniformly \(1\)-Lipschitz and bounded by \(1\), and all
    hypotheses \(h\in\Hcal\) are uniformly bounded by \(1\).
\end{itemize}

\begin{theorem} \label{thm:cr-fast-full}
Fix \(\delta\in(0,1)\), \(K\in(0,\infty)\), and assume
\eqref{cond:cr-loss-upper}--\eqref{cond:cr-simplify}. Let \(\Wcal\) be a class
of weight vectors and define
\begin{align*}
    C_1 &= \sup_{w\in\Wcal}\|w\|_1,
    \qquad
    C_\Wcal=\inf_{w\in\Wcal}\|w\|_{2},
    \qquad
    B_\Wcal=\sup_{w\in\Wcal}\frac{\|w\|_\infty}{\|w\|_{2}^2}, \qquad
    m_\beta = \inf\left\{m:\frac{n}{m}\beta(m)\le\delta\right\}, \\
    K_w &= 4+\log\!\left(
        N_1(\epsilon_\Wcal,\Wcal)\,
        N_\infty(\epsilon_w,\Hcal)^2
    \right), \qquad
    \epsilon_\Wcal = \min\left\{\frac{C_\Wcal^3}{64(1+C_\Wcal+C_1K)},\frac{C_\Wcal}{64(1+K)}\right\},
    \qquad
    \epsilon_w=\frac{\|w\|_{2}^2}{32C_1}.
\end{align*}
Let \(r:[C_\Wcal,C_1]\to[0,\infty)\) be increasing and \(K\)-Lipschitz, and
suppose that for all \(w\in\Wcal\),
\[
    r(\|w\|_{2})^2
    \ge
    K_w\|w\|_{2}^2
    \left(
        C_\Pcal^2K_\rho
        +
        m_\beta B_\Wcal
        \min\{2,C_\Pcal C_\infty^{-1}r(\|w\|_{2})\}
    \right).
\]
Then, with probability at least \(1-2\delta\), it holds for all \(w\in\Wcal\):
\[
    C_\Pcal^{-2}\|\hat h_w-\bar h_w\|_{L_2(\Pcal)}^2
    \lesssim
    R^w(\hat h_w)-R^w(\bar h_w)
    \lesssim
    r(\|w\|_{2})^2\log^2(1/\delta).
\]
\end{theorem}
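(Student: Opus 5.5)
The plan is a standard peeling argument, uniform over $\Wcal$, applied to the localized weighted empirical process, followed by a conversion to the $L_2(\Pcal)$ norm. Two reductions come first.

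\emph{Norm conversion.} The left inequality is immediate. By \eqref{cond:cr-bernstein}, $R^w(\hat h_w)-R^w(\bar h_w)\ge\|\hat h_w-\bar h_w\|_{L_2(P_w)}^2$. Since $P_w\in\Pcal$, \eqref{cond:cr-loss-upper} gives $\|\cdot\|_{L_2(\Pcal)}\le C_\Pcal\|\cdot\|_{L_2(P_w)}$, which yields the factor $C_\Pcal^{-2}$.

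\emph{Reduction to independent blocks.} For the right inequality, I would replace the original blocks by their maximal-coupling versions. On an event of probability at least $1-\delta$, the choice $m=m_\beta$ gives $U_j=U_j^*$ for all $j$, so $\mathbb G_n^w=\widetilde{\mathbb G}_n^{w,m}$ on that event.

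\emph{Basic inequality.} By optimality of $\hat h_w$,
\[
R^w(\hat h_w)-R^w(\bar h_w)\le \mathbb G_n^w(L_{\hat h_w}-L_{\bar h_w}).
\]
Write $u=\|\hat h_w-\bar h_w\|_{L_2(\Pcal)}$. I would peel $u$ into shells $H'_{k,w}$, indexed by $C_\Pcal 2^{k/2+1}r(\|w\|_2)$, for $k\ge k_0$ with $k_0\asymp\log(1/\delta)$ chosen later.

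\emph{Peeling.} The goal is to show that, simultaneously for all $k\ge k_0$ and all $w\in\Wcal$, the localized supremum of $r^{-2}|\mathbb G_n^w(L_h-L_{h'})|$ over $H'_{k,w}$ is at most $2^k-\tfrac14$. Granting this, suppose $\hat h_w$ lies in shell $k$ but not in shell $k-1$. Then
\begin{align*}
R^w(\hat h_w)-R^w(\bar h_w)&\ge C_\Pcal^{-2}u^2\gtrsim 2^k r^2,\\
R^w(\hat h_w)-R^w(\bar h_w)&\le (2^k-\tfrac14)r^2.
\end{align*}
The numerical constants in the shell radii are arranged so that these two bounds are incompatible. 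Hence $\hat h_w$ lies in shell $k_0$, and the excess risk is at most $2^{k_0}r(\|w\|_2)^2\lesssim r(\|w\|_2)^2\log^2(1/\delta)$.

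\emph{Per-shell tail bound.} To bound the probability of each shell event, I would apply Lemma~\ref{lem:covering}. It reduces the supremum over $w$ and over pairs $(h,h')$ to a single pair in the enlarged shell $\overline H'_{k,w}$, at the cost of the factor $N_1(\epsilon_\Wcal,\Wcal)N_\infty(\epsilon_w,\Hcal)^2\le e^{K_w-4}$. The choices of $\epsilon_\Wcal$ and $\epsilon_w$, together with $K$-Lipschitzness of $r$, control the discretization error and account for the slack $\tfrac14$. For a fixed pair, $f=L_h-L_{h'}$ satisfies, by \eqref{cond:cr-simplify},
\[
\|f\|_{L_2(\Pcal)}^2\le v=C_\Pcal^2 2^{k+3}r^2,\qquad b\le\min\{2,\|h-h'\|_\infty\}.
\]
By \eqref{cond:cr-infty} and \eqref{cond:cr-loss-upper}, $\|h-h'\|_\infty\le C_\infty^{-1}C_\Pcal 2^{(k+3)/2}r$. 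Lemma~\ref{lem:bernstein} with $s=2^{k-1}r^2$ then gives an exponent of order
\[
\frac{2^{2k}r^4}{C_\Pcal^2 2^k r^2\|w\|_2^2K_\rho+m\|w\|_\infty 2^{k}r^2\min\{2,C_\Pcal C_\infty^{-1}2^{k/2}r\}}.
\]
Using $\|w\|_\infty\le B_\Wcal\|w\|_2^2$ and the hypothesis on $r^2$, this is at least of order $2^{k/2}K_w$. The $\min\{2,\cdot\}$ term grows at most like $2^{k/2}$, and that is what costs the half-power. The per-$k$ probability is therefore at most $4e^{-K_w(c2^{k/2}-1)+4}$.

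\emph{Summation and the main obstacle.} Summing over $k\ge k_0$ gives a geometric series. Choosing $2^{k_0/2}\asymp\log(1/\delta)$ makes the total at most $\delta$, which combined with the coupling event gives probability $1-2\delta$. This choice is exactly where $\log^2(1/\delta)$ arises. I expect the main obstacle to be the bookkeeping of this Bernstein exponent across shells, specifically making the sup-norm term and the covering-number penalty $K_w$ absorbable uniformly in $w$ via the stated lower bound on $r$. The rest should follow the proof of Theorem~4.2 in \citet{brock2026fast}, with the comparator $\bar h_w$ in place of the Bayes rule.
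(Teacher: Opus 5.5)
Your proposal is correct and follows essentially the paper's argument: the basic inequality, coupling at $m_\beta$, Lemma~\ref{lem:covering} combined with Lemma~\ref{lem:bernstein} at $s=2^{k-1}r^2$ and the rate condition, and $2^{k_0/2}\asymp\log(1/\delta)$, which produces the $\log^2(1/\delta)$ factor. The one difference is cosmetic. You peel directly over the $L_2(\Pcal)$-balls $H'_{k,w}$ and exclude the outer shells by contradiction with \eqref{cond:cr-bernstein}, whereas the paper peels over the excess-risk sets $\Hcal_{k,w}$ and then embeds them into $H'_{k,w}$. Separately, your per-shell bound should read $-4$ rather than $+4$ in the exponent, which only helps.
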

\begin{proof}
The proof follows the same arguments as the proof of Theorem~4.2 of \citet{brock2026fast}, but with all shells and localized classes centered at $\bar h_w$ instead of $h_w^*$.
For fixed $w$, it holds $R_t^w(\hat h_w)\le R_t^w(\bar h_w)$. Hence
\begin{align*}
 R^w(\hat h_w)-R^w(\bar h_w)
 &=(R^w-R_t^w)(\hat h_w)-(R^w-R_t^w)(\bar h_w)+R_t^w(\hat h_w)-R_t^w(\bar h_w)\\
 &\le(R^w-R_t^w)(\hat h_w)-(R^w-R_t^w)(\bar h_w)\\
 &=\mathbb G_n^w(L_{\hat h_w}-L_{\bar h_w}),
\end{align*}
where $L_h=L(\cdot,h)$ and $\mathbb G_n^w(L_h)=R^w(h)-R_t^w(h)$.
We now want to bound $\Pr\{\sup_w r(\|w\|_2)^{-2}(R^w(\hat h_w)-R^w(\bar h_w))>2^M\}$.
To this end, define $\Hcal_{k,w}=\{h\in\Hcal:R^w(h)-R^w(\bar h_w)\le2^{k+1}r(\|w\|_2)^2\}$ for every $k\ge1$ and $w\in\Wcal$.
By peeling and the coupling, we have
\begin{align*}
&\Pr\Bigl\{\sup_{w\in\Wcal}r(\|w\|_2)^{-2}\bigl(R^w(\hat h_w)-R^w(\bar h_w)\bigr)>2^M\Bigr\}\\
&\quad\le\sum_{k=M}^\infty
 \Pr\left(\sup_{w\in\Wcal}\sup_{h\in\Hcal_{k,w}}
 r(\|w\|_2)^{-2}|\widetilde{\mathbb G}_n^{w,m_\beta}(L_h-L_{\bar h_w})|>2^k\right)
 +\frac{n}{m_\beta}\beta(m_\beta),
\end{align*}
where the last term is at most $\delta$ by the definition of $m_\beta$.
Further, for any $h\in\Hcal$ (in particular for $h\in\Hcal_{k,w}$), \eqref{cond:cr-loss-upper} and \eqref{cond:cr-bernstein}, using $P_w\in\Pcal$, imply
\[
 C_\Pcal^{-1}\|h-\bar h_w\|_{L_2(\Pcal)}
 \le\inf_{P\in\Pcal}\|h-\bar h_w\|_{L_2(P)}
 \le\|h-\bar h_w\|_{L_2(P_w)}
 \le\bigl[R^w(h)-R^w(\bar h_w)\bigr]^{1/2}.
\]
Therefore, we get
\[
 \|h-\bar h_w\|_{L_2(\Pcal)}
 \le C_\Pcal\bigl[R^w(h)-R^w(\bar h_w)\bigr]^{1/2}
 \le C_\Pcal2^{(k+1)/2}r(\|w\|_2)
 \le C_\Pcal2^{k/2+1}r(\|w\|_2),
\]
so that $(h,\bar h_w)\in H'_{k,w}$ with $H'_{k,w}$ and $\overline H'_{k,w}$ as in Lemma~\ref{lem:covering}.
For every $(h,h')\in\overline H'_{k,w}$, \eqref{cond:cr-infty} and \eqref{cond:cr-simplify} further give
\[
 \|L_h-L_{h'}\|_\infty\le\min\{2,C_\Pcal C_\infty^{-1}2^{(k+3)/2}r(\|w\|_2)\},
 \qquad
 \|L_h-L_{h'}\|_{L_2(\Pcal)}^2\le C_\Pcal^22^{k+3}r(\|w\|_2)^2.
\]
Hence, by Lemma~\ref{lem:covering} applied to the coupled process and Lemma~\ref{lem:bernstein} with $s=2^{k-1}r(\|w\|_2)^2$, and the rate condition, it holds for $M=\lceil2\log_2(96+24\log(34/\delta))\rceil$,
\begin{align*}
&\Pr\Bigl\{\sup_{w\in\Wcal}r(\|w\|_2)^{-2}\bigl(R^w(\hat h_w)-R^w(\bar h_w)\bigr)>2^M\Bigr\}\\
&\quad\le\sum_{k=M}^\infty
 \Pr\left(\sup_{w\in\Wcal,(h,h')\in H'_{k,w}}
 r(\|w\|_2)^{-2}|\widetilde{\mathbb G}_n^{w,m_\beta}(L_h-L_{h'})|>2^k\right)+\delta\\
&\quad\le\sum_{k=M}^\infty\sup_{w\in\Wcal}e^{K_w}
 \sup_{(h,h')\in\overline H'_{k,w}}
 \Pr\left(r(\|w\|_2)^{-2}|\widetilde{\mathbb G}_n^{w,m_\beta}(L_h-L_{h'})|>2^{k-1}\right)+\delta\\
&\quad\le2\delta.
\end{align*}
Since $2^M\lesssim\log^2(1/\delta)$ for $\delta<1/2$ (for $\delta\ge1/2$ the claim is trivial), with probability at least $1-2\delta$, it holds $R^w(\hat h_w)-R^w(\bar h_w)\lesssim r(\|w\|_2)^2\log^2(1/\delta)$ for all $w\in\Wcal$.
The lower bound follows from the norm comparison above, which holds for every $h\in\Hcal$, with $h=\hat h_w$.
\end{proof}

\paragraph{Simplifying Assumption}
To avoid cluttering the explicit rates below and in Appendices~\ref{app:linear-nonparametric} and~\ref{app:nonparametric}, we assume from now on:
\begin{itemize}
    \item[\textbf{(S)}] $C_1,B_\Wcal,C_\Pcal,K_\rho\lesssim1$, $\delta\ge n^{-1}$, and the data are exponentially $\beta$-mixing, $\beta(m)\lesssim e^{-cm}$ for some $c>0$.
\end{itemize}
Under (S), $(n/m)\beta(m)\le\delta$ holds for all $m\gtrsim\log(n/\delta)$, so that $m_\beta\lesssim\log n$.

\begin{proposition}\label{prop:cr-rate-simplify}
Assume the conditions of Theorem~\ref{thm:cr-fast-full}, (S), and \(C_\Wcal\ge n^{-1/2}\). Suppose that for
some \(D\ge1\) and \(\xi\in[0,2)\) with \(D\log n\le n\), and for all \(w\in\Wcal\) and \(\epsilon\in(0,1]\),
\begin{equation}\label{cond:cr-rate-complexities}
 \log N_\infty(\epsilon,\Hcal)
 +\log N_1(\epsilon,\Wcal)
 \lesssim D\|w\|_{2}^{-\xi}\log(n/\epsilon).
\end{equation}
Then there is a sufficiently large constant \(A\ge1\) for which
\[
 r(u)=u^{1-\xi/2}\sqrt{AD}\,\log n
\]
satisfies the rate condition in Theorem~\ref{thm:cr-fast-full}. In particular, with probability at least \(1-2\delta\), for all \(w\in\Wcal\),
\[
 R^w(\hat h_w)-R^w(\bar h_w)\lesssim D\|w\|_2^{2-\xi}\log^2n\,\log^2(1/\delta).
\]
\end{proposition}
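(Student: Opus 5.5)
The plan is to verify the rate condition of Theorem~\ref{thm:cr-fast-full} directly for the proposed $r$, and then read off the conclusion from that theorem. Fix $w\in\Wcal$ and write $u=\|w\|_2\in[C_\Wcal,C_1]$, so $u\ge n^{-1/2}$. First we bound $K_w$. The covering radii are $\epsilon_\Wcal$, a constant depending only on $C_\Wcal,C_1,K$, and $\epsilon_w=u^2/(32C_1)$. Since $C_\Wcal\ge n^{-1/2}$ and $C_1\lesssim1$, both radii are at least of order $n^{-c}$ for some absolute power $c$, provided $K$ is polynomially bounded; we will check this once $r$ is fixed. Hence $\log(n/\epsilon)\lesssim\log n$. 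By \eqref{cond:cr-rate-complexities} we then get
\[
K_w\lesssim 1+D u^{-\xi}\log n\lesssim D u^{-\xi}\log n ,
\]
using $D\ge1$ and $u\le C_1\lesssim1$.

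Next we bound the right-hand side of the rate condition. Under (S) we have $C_\Pcal^2K_\rho\lesssim1$, $B_\Wcal\lesssim1$, and $m_\beta\lesssim\log n$. The minimum term is at most $2$, so
\[
K_w u^2\bigl(C_\Pcal^2K_\rho+m_\beta B_\Wcal\min\{2,\dots\}\bigr)\lesssim D u^{2-\xi}\log^2 n .
\]
This is exactly $r(u)^2/A$. Choosing $A$ larger than the implicit constant therefore gives the rate condition.

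It remains to check that $r$ is admissible. The exponent satisfies $1-\xi/2\in(0,1]$, so $r$ is increasing on $[C_\Wcal,C_1]$. Its derivative is at most
\[
(1-\xi/2)\,u^{-\xi/2}\sqrt{AD}\log n\le \sqrt{AD}\,n^{\xi/4}\log n ,
\]
using $u\ge n^{-1/2}$. Thus $r$ is $K$-Lipschitz with $K$ polynomial in $n$ (by $D\log n\le n$), so $\log K\lesssim\log n$. This closes the loop for $\epsilon_\Wcal$ above.

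The main obstacle is this circularity: $\epsilon_\Wcal$ depends on $K$, and $K$ depends on $r$. I would resolve it by fixing $K:=\sqrt{A D}\,n^{\xi/4}\log n$ a priori. Then $\log(1/\epsilon_\Wcal)\lesssim\log n$ uniformly in $A$, up to an additive $\log A$ that is absorbed by taking $A$ large. Finally, Theorem~\ref{thm:cr-fast-full} yields
\[
R^w(\hat h_w)-R^w(\bar h_w)\lesssim r(u)^2\log^2(1/\delta)=AD\,u^{2-\xi}\log^2 n\,\log^2(1/\delta),
\]
which is the claim.
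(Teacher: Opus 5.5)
Your proposal is correct and follows essentially the same route as the paper's proof. You bound $K_w\lesssim Du^{-\xi}\log n$ via the covering condition, absorb $C_\Pcal^2K_\rho+m_\beta B_\Wcal\min\{\cdot\}\lesssim\log n$, and check monotonicity and the Lipschitz constant using $u\ge n^{-1/2}$.

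The only difference is the choice of $K$. The paper fixes $K=A^2n^2$ and uses $D\log n\le n$ to dominate the derivative of $r$. You take $K=\sqrt{AD}\,n^{\xi/4}\log n$ and use $D\log n\le n$ only to get $\log K\lesssim\log n+\log A$. You also make explicit the absorption of the resulting $\log A$ term, which the paper leaves implicit in ``after increasing $A$''.
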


\begin{proof}
Take \(K=A^2n^2\) in Theorem~\ref{thm:cr-fast-full}. Since
\(C_\Wcal\ge n^{-1/2}\), both covering radii in that theorem have inverses
bounded by a polynomial in \(n\). Hence
\eqref{cond:cr-rate-complexities}, with \(u=\|w\|_{2}\), gives
\(K_w\le A D u^{-\xi}\log n\)
after increasing \(A\). Using \(\min\{2,C_\Pcal C_\infty^{-1}r(u)\}\le2\) and (S), we have \(C_\Pcal^2K_\rho+m_\beta B_\Wcal\min\{2,C_\Pcal C_\infty^{-1}r(u)\}\lesssim\log n\), so the theorem's rate
condition is implied by
\[
 r(u)^2\ge A D u^{2-\xi}\log^2n
\]
after increasing \(A\), which is an equality for the stated choice. This choice is increasing because \(\xi<2\). Differentiating it,
using \(u\ge n^{-1/2}\) and \(D\log n\le n\), shows that its
Lipschitz constant is at most \(A^2n^2=K\). It is therefore admissible in
Theorem~\ref{thm:cr-fast-full}, which gives the final bound.
\end{proof}

\begin{proposition}\label{prop:squareloss-bayes}
For a distribution $P$, write $R_P(h)=\E_P[(Y-h(Z))^2]$.
    Let \(L(X,h)=(Y-h(Z))^2\) and $h^* = \E[Y|Z]$. Then
    \begin{align*}
        R_P(h)-R_P(h^*) = \|h-h^*\|_{L_2(P)}^2.
    \end{align*}
\end{proposition}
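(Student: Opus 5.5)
The plan is the standard add-and-subtract argument for the conditional mean, using the tower property to remove the cross term. Throughout, assume $\E_P[Y^2]<\infty$ and $h\in L_2(P)$ (otherwise both sides may be infinite and the identity holds trivially in the extended sense). Under these assumptions $h^*(Z)=\E_P[Y\mid Z]\in L_2(P)$ by Jensen's inequality.

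First, write $Y-h(Z)=(Y-h^*(Z))+(h^*(Z)-h(Z))$ and expand the square under $P$:
\[
 R_P(h)=R_P(h^*)+\E_P\bigl[(h^*(Z)-h(Z))^2\bigr]+2\,\E_P\bigl[(Y-h^*(Z))(h^*(Z)-h(Z))\bigr].
\]
Every term is finite by Cauchy--Schwarz. The middle term is exactly $\|h-h^*\|_{L_2(P)}^2$, with the norm understood as the $L_2$ norm of the function of $Z$ under the $Z$-marginal of $P$, as throughout the paper.

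Second, show that the cross term vanishes. Condition on $Z$: since $h^*(Z)-h(Z)$ is $\sigma(Z)$-measurable and square-integrable, the tower property gives
\[
 \E_P\bigl[(Y-h^*(Z))(h^*(Z)-h(Z))\bigr]=\E_P\bigl[(h^*(Z)-h(Z))\,\E_P[Y-h^*(Z)\mid Z]\bigr]=0,
\]
because $\E_P[Y\mid Z]=h^*(Z)$. Pulling the $\sigma(Z)$-measurable factor out of the conditional expectation is justified since the product is integrable by Cauchy--Schwarz. Rearranging the expansion then yields the claim.

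The only delicate point is this integrability bookkeeping. It is immediate in the paper's setting, where $|Y|\le B$ and all hypotheses are bounded, so I would simply note it.
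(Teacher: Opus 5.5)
Your proposal is correct and is the same argument as the paper's: expand $(Y-h^*(Z))+(h^*(Z)-h(Z))$, identify the middle term as $\|h-h^*\|_{L_2(P)}^2$, and kill the cross term by conditioning on $Z$ and using $\E[Y-h^*(Z)\mid Z]=0$. Your integrability bookkeeping is a harmless addition that the paper leaves implicit, and it is automatic under $|Y|\le B$ with bounded hypotheses; your parenthetical about the non-integrable case is loose, though, since when $\E_P[Y^2]=\infty$ the term $R_P(h^*)$ can itself be infinite, and the left-hand side is then undefined rather than trivially equal.
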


\begin{proof}
    For the square loss, the risk can be written as
\begin{align*}
    R_P(h) &= \E[(Y-h(Z))^2] \\
    &= \E[(Y-h^*(Z)+h^*(Z)-h(Z))^2] \\
    &= \E[(Y-h^*(Z))^2] + \E[(h(Z)-h^*(Z))^2] + 2\E[(Y-h^*(Z))(h^*(Z)-h(Z))] \\
    &= R_P(h^*) + \|h-h^*\|_{L_2(P)}^2 + 2\E[(h^*(Z)-h(Z)) \E[(Y-h^*(Z))\mid Z]] \\
    &= R_P(h^*) + \|h-h^*\|_{L_2(P)}^2,
\end{align*}
where the last equality follows from $\E[(Y-h^*(Z))\mid Z] = 0$ by definition of the Bayes rule.
\end{proof}

\subsection{Discrepancy}\label{app:discrepancy-proof}

\begin{proposition}\label{prop:discrepancy}
    For the oracle drift error, it holds
\begin{align*}
    R_t(\bar h_w)-R_t(\bar h_t) \leq 2 \mathrm{dis}(P_w,P_t).
\end{align*}
\end{proposition}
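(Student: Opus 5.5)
The plan is the add-and-subtract argument behind \eqref{eq:decomposition_discrepancy}. We insert the weighted risk $R^w$ twice and write
\begin{align*}
R_t(\bar h_w)-R_t(\bar h_t)
&=\bigl[R_t(\bar h_w)-R^w(\bar h_w)\bigr]
+\bigl[R^w(\bar h_w)-R^w(\bar h_t)\bigr]\\
&\quad+\bigl[R^w(\bar h_t)-R_t(\bar h_t)\bigr].
\end{align*}

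The middle bracket is nonpositive. Since $\bar h_w$ minimizes $R^w$ over $\Hcal$ and $\bar h_t\in\Hcal$, we have $R^w(\bar h_w)\le R^w(\bar h_t)$, so this term can be dropped.

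The two outer brackets are risk differences between $P_t$ and the mixture $P_w=\sum_{i<t}w_iP_i$, evaluated at the fixed predictors $\bar h_w$ and $\bar h_t$. We first note that $R^w(h)=\E_{P_w}[L(X,h)]$. This holds by linearity of expectation, since $R^w(h)=\sum_i w_i\E_{P_i}[L(X,h)]$ and the $w_i$ sum to one. Hence $R^w$ is exactly the risk under $P_w$, and the discrepancy $\mathrm{dis}(P_w,P_t)=\sup_{h\in\Hcal}|R^w(h)-R_t(h)|$ is the right object. Each outer bracket is then bounded by its absolute value, which is at most the supremum over $\Hcal$ because both $\bar h_w$ and $\bar h_t$ lie in $\Hcal$. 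This gives the bound $2\,\mathrm{dis}(P_w,P_t)$.

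There is no real obstacle. The only points to check are that both oracles exist and lie in $\Hcal$, and that the mixture identity for $R^w$ holds. The argument uses only the symmetric (absolute-value) discrepancy, because the two outer brackets are differences taken in opposite directions. The same computation restricted to $\Hcal_{w,\varepsilon}$ yields \eqref{eq:decomposition_localized} whenever $\bar h_t\in\Hcal_{w,\varepsilon}$.
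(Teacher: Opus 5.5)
Your proposal is correct and matches the paper's proof: the same three-term add-and-subtract with $R^w$, dropping the middle term via $R^w(\bar h_w)\le R^w(\bar h_t)$, and bounding each outer term by $\sup_{h\in\Hcal}|R^w(h)-R_t(h)|$. Your extra remark that $R^w(h)=\E_{P_w}[L(X,h)]$ by linearity is a harmless clarification. The paper takes this identity for granted when it writes $\mathrm{dis}(P_w,P_t)$.
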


\begin{proof}
Since $R^w(\bar h_w)\leq R^w(\bar h_t)$,
\begin{align*}
R_t(\bar h_w)-R_t(\bar h_t)
&= R_t(\bar h_w)-R^w(\bar h_w) + R^w(\bar h_w)-R^w(\bar h_t) + R^w(\bar h_t)-R_t(\bar h_t) \\
&\le R_t(\bar h_w)-R^w(\bar h_w) + R^w(\bar h_t)-R_t(\bar h_t) \\
&\leq  |R^w(\bar h_t)-R_t(\bar h_t)| + |R^w(\bar h_w)-R_t(\bar h_w)| \\
&\le 2 \sup_{h\in\Hcal}
    |R^w(h)-R_t(h)|=2\,\mathrm{dis}(P_w,P_t).
\end{align*}
\end{proof}

\section{Linear Regression}\label{app:linear-nonparametric}

Let
\[
    \beta_w\in\argmin_{\beta\in\Theta}R^w(\beta),
    \qquad
    \beta_t\in\argmin_{\beta\in\Theta}R_t(\beta),
\]
so that \(\bar h_w=h_{\beta_w}\) and
\(\bar h_t=h_{\beta_t}\). As in Section~\ref{sec:linear_models}, the model is correctly specified, so $\bar h_t=h_t^*$ and $R_t(h)-R_t(\bar h_t)=\|h-\bar h_t\|_{L_2(P_t)}^2$ by Proposition~\ref{prop:squareloss-bayes}.
Define $a_i=\E_{P_i}[Y^2]$, $G_i=\E_{P_i}[ZZ^\top]$, and $b_i=\E_{P_i}[ZY]$. We will always assume that for some \(0<\gamma\le \Gamma<\infty\),
\[
    \gamma I_p
    \preceq
    G_t
    \preceq
    \Gamma I_p .
\]
Further, define the norm $\|v\|_{G_t}=\sqrt{v^\top G_t v}$.
\begin{lemma}\label{lem:squarelosslinear}
    For the square loss, it holds
    \begin{align*}
        \|h_{\beta_w}-h_{\beta_t}\|_{L_2(P_t)}^2 = (\beta_w-\beta_t)^\top G_t (\beta_w-\beta_t) = \|\beta_w - \beta_t\|^2_{G_t}.
    \end{align*}
\end{lemma}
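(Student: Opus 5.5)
The identity is a direct computation from the definition of the $L_2(P_t)$ norm, using linearity of the hypotheses and the definition of the Gram matrix $G_t=\E_{P_t}[ZZ^\top]$. I will write out the difference of the two predictors pointwise, square it, take expectations under $P_t$, and recognize the resulting quadratic form. No earlier result is needed beyond the definitions in the setup of Section~\ref{sec:linear_models}.

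Concretely, the steps are as follows. First, for every $z$ we have $h_{\beta_w}(z)-h_{\beta_t}(z)=\beta_w^\top z-\beta_t^\top z=(\beta_w-\beta_t)^\top z$. Squaring gives
\[
\bigl(h_{\beta_w}(z)-h_{\beta_t}(z)\bigr)^2=(\beta_w-\beta_t)^\top z z^\top(\beta_w-\beta_t),
\]
since a scalar equals its own transpose. Second, I take the expectation under $P_t$ with $Z=Z_t$. The vectors $\beta_w,\beta_t$ are deterministic population oracles, so they can be pulled out of the expectation by linearity. This yields $\|h_{\beta_w}-h_{\beta_t}\|_{L_2(P_t)}^2=(\beta_w-\beta_t)^\top \E_{P_t}[ZZ^\top](\beta_w-\beta_t)=(\beta_w-\beta_t)^\top G_t(\beta_w-\beta_t)$. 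Third, the last equality is just the definition $\|v\|_{G_t}^2=v^\top G_t v$ with $v=\beta_w-\beta_t$.

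The only point needing care, and it is mild, is integrability. The expectation $\E_{P_t}[ZZ^\top]$ is finite because $\sup_{z\in\Zcal}\|z\|_2\le1$, so every entry of $ZZ^\top$ is bounded by one. This also guarantees that $G_t\preceq\Gamma I_p$ is meaningful and that interchanging the expectation with the bilinear form is legitimate. I therefore expect no genuine obstacle: the lemma is a bookkeeping identity that will be combined with $\gamma I_p\preceq G_t\preceq\Gamma I_p$ later to obtain the constants $L_w^2\le\Gamma$ in Theorem~\ref{thm:parametric_oracle_stability_local} and the two-sided bounds of Proposition~\ref{thm:driftoptimality}.
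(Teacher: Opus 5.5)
Your proposal is correct and follows the same direct computation as the paper: write the difference as $(\beta_w-\beta_t)^\top Z$, express its square as a quadratic form in $ZZ^\top$, pull the deterministic oracles out of the expectation to obtain $G_t$, and recognize the $G_t$-norm. Your added remark that integrability follows from $\sup_{z}\|z\|_2\le 1$ is a harmless extra that the paper leaves implicit.
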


\begin{proof}
    The proof follows from a straightforward computation,
    \begin{align*}
         \|h_{\beta_w}-h_{\beta_t}\|_{L_2(P_t)}^2 &= \E_{P_t} [(h_{\beta_w}-h_{\beta_t})^2] \\
         &= \E_{P_t} [((\beta_w-\beta_t)^\top Z)^2] \\
         &= \E_{P_t} [(\beta_w-\beta_t)^\top ZZ^\top (\beta_w-\beta_t)] \\
         &= (\beta_w-\beta_t)^\top \E_{P_t} [ZZ^\top] (\beta_w-\beta_t) \\
         &= (\beta_w-\beta_t)^\top G_t (\beta_w-\beta_t) \\
         &= \|\beta_w - \beta_t\|^2_{G_t}.
    \end{align*}
\end{proof}

\subsection{Oracle Drift}

\begin{proof}[Proof of Proposition~\ref{prop:localnormalcontrol}]
We have
\[
    \|\nabla R^w(\beta_w)-\nabla R_t(\beta_w)\|_2
    =
    2\|(G_w-G_t)\beta_w-(b_w-b_t)\|_2
    =
    2\|v_w\|_2.
\]

Moreover,
\begin{align*}
\left\langle
\nabla R_t(\beta)-\nabla R_t(\beta_t),
\beta-\beta_t
\right\rangle
&=
2(\beta-\beta_t)^\top G_t(\beta-\beta_t) \ge
2\gamma\|\beta-\beta_t\|_2^2 .
\end{align*}
Thus Theorem~\ref{thm:parametric_oracle_stability_local} applies with curvature
constant \(2\gamma\) and gradient discrepancy \(\Delta_w=2\|v_w\|_2\), and its proof gives $\|\beta_w-\beta_t\|_2\le\|v_w\|_2/\gamma$.
By Lemma \ref{lem:squarelosslinear},
\begin{align*}
\|\bar h_w-\bar h_t\|_{L_2(P_t)}^2
&=
\|h_{\beta_w}-h_{\beta_t}\|_{L_2(P_t)}^2 =
(\beta_w-\beta_t)^\top
G_t
(\beta_w-\beta_t) \le
\Gamma
\|\beta_w-\beta_t\|_2^2 \le
\frac{\Gamma}{\gamma^2}
\|v_w\|_2^2 .
\end{align*}
\end{proof}

\begin{proof}[Proof of Proposition~\ref{thm:driftoptimality}]
Since $R_i(\beta)=a_i-2\beta^\top b_i+\beta^\top G_i \beta$ and $R^w(\beta)=a_w-2\beta^\top b_w+\beta^\top G_w \beta$, the gradients are
\begin{align*}
   \nabla R_t(\beta) = 2(G_t\beta-b_t), \qquad \nabla R^w(\beta) = 2(G_w \beta-b_w).
\end{align*}
Therefore, the first-order conditions $\nabla R^w(\beta_w)=0$ and $\nabla R_t(\beta_t)=0$ give $G_w \beta_w = b_w$ and $G_t\beta_t = b_t$. Now, write
\[
\begin{aligned}
v_w &= (G_w-G_t)\beta_w-(b_w-b_t) \\
&=(G_w-G_t)\beta_w-(G_w\beta_w-G_t\beta_t) \\
&=G_w\beta_w-G_t\beta_w-G_w\beta_w+G_t\beta_t \\
&=G_t\beta_t-G_t\beta_w \\
&=G_t(\beta_t-\beta_w).
\end{aligned}
\]
Thus \(\beta_w-\beta_t=-G_t^{-1}v_w\), and by Lemma \ref{lem:squarelosslinear} it holds
\[
\|\bar h_w-\bar h_t\|_{L_2(P_t)}^2
=
(\beta_w-\beta_t)^\top G_t(\beta_w-\beta_t)
=
v_w^\top G_t^{-1}v_w .
\]
The eigenvalue bounds on \(G_t\) give the result.
\end{proof}

\subsection{Global and Localized Discrepancy}

\begin{proposition}
\label{prop:discrepancy_upper_local_moments}
Assume that \(\Theta\subseteq B\mathbb B_2^p\) and \(\|\beta_w\|_2\le B\). Let $ v_w=(G_w-G_t)\beta_w-(b_w-b_t)$.
Then
\[
    \mathrm{dis}(P_w,P_t)
    \le
    |a_w-a_t|
    +
    2B\|v_w\|_2
    +
    3B^2\|G_w-G_t\|_{\mathrm{op}} .
\]
\end{proposition}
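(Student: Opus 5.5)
We expand the risk difference around the weighted oracle $\beta_w$ rather than around the origin, so that the linear term is expressed through the local residual $v_w$. Since $\mathrm{dis}(P_w,P_t)=\sup_{\beta\in\Theta}|R^w(\beta)-R_t(\beta)|$ and $\Theta\subseteq B\mathbb B_2^p$, it suffices to bound $|R^w(\beta)-R_t(\beta)|$ uniformly over $\|\beta\|_2\le B$.

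Set $D=G_w-G_t$ and $d=b_w-b_t$, so that
\[
R^w(\beta)-R_t(\beta)=(a_w-a_t)-2\beta^\top d+\beta^\top D\beta .
\]
The definition of $v_w$ gives $d=D\beta_w-v_w$. Substituting this, we get
\[
R^w(\beta)-R_t(\beta)=(a_w-a_t)+2\beta^\top v_w+\beta^\top D\beta-2\beta^\top D\beta_w .
\]
We now bound each term with Cauchy--Schwarz and the operator norm, using $\|\beta\|_2\le B$ and $\|\beta_w\|_2\le B$:
\begin{align*}
|2\beta^\top v_w| &\le 2B\|v_w\|_2,\\
|\beta^\top D\beta| &\le B^2\|D\|_{\mathrm{op}},\\
|2\beta^\top D\beta_w| &\le 2B^2\|D\|_{\mathrm{op}}.
\end{align*}
Summing the four terms yields $|a_w-a_t|+2B\|v_w\|_2+3B^2\|G_w-G_t\|_{\mathrm{op}}$. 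This bound is uniform in $\beta$, so taking the supremum gives the claim.

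There is no real obstacle here. The one step that matters is the substitution $d=D\beta_w-v_w$, which trades the cross-moment change $\|b_w-b_t\|_2$ for the local residual $\|v_w\|_2$. The price is the cross term $2\beta^\top D\beta_w$, and this is exactly what raises the constant on $\|G_w-G_t\|_{\mathrm{op}}$ from $1$ to $3$. No interiority of $\beta_w$ is needed, only $\|\beta_w\|_2\le B$.
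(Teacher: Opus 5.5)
Your proposal is correct and follows the paper's proof essentially verbatim: you substitute $b_w-b_t=(G_w-G_t)\beta_w-v_w$ into the quadratic expansion of $R^w(\beta)-R_t(\beta)$, then bound the four resulting terms using Cauchy--Schwarz, the operator norm, and $\|\beta\|_2,\|\beta_w\|_2\le B$. Your remark that interiority of $\beta_w$ is not needed also matches the paper, which assumes only $\|\beta_w\|_2\le B$.
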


\begin{proof}
For square loss,
\[
    R^w(\beta)-R_t(\beta)
    =
    (a_w-a_t)-2\beta^\top(b_w-b_t)
    +\beta^\top(G_w-G_t)\beta .
\]
Since
\[
    b_w-b_t=(G_w-G_t)\beta_w-v_w,
\]
we obtain
\[
\begin{aligned}
    R^w(\beta)-R_t(\beta)
    &=
    (a_w-a_t)
    -2\beta^\top (G_w-G_t)\beta_w
    +2\beta^\top v_w
    +\beta^\top(G_w-G_t)\beta .
\end{aligned}
\]
Thus, for every \(\beta\in\Theta\),
\[
\begin{aligned}
    |R^w(\beta)-R_t(\beta)|
    &\le
    |a_w-a_t|
    +2\|\beta\|_2\|v_w\|_2
    +2\|\beta\|_2\|G_w-G_t\|_{\mathrm{op}}\|\beta_w\|_2
    +\|\beta\|_2^2\|G_w-G_t\|_{\mathrm{op}} \\
    &\le
    |a_w-a_t|
    +2B\|v_w\|_2
    +3B^2\|G_w-G_t\|_{\mathrm{op}} .
\end{aligned}
\]
Taking the supremum over \(\beta\in\Theta\) gives the claim.
\end{proof}

\paragraph{Lower Bounds on the Discrepancy}
We now show that the discrepancy is, in general, at
least first order in the same quantity.

Let $a_w=\sum_{i=1}^{t-1}w_ia_i$.
Then
\[
    \mathrm{dis}(P_w,P_t)
    =
    \sup_{\beta\in\Theta}
    \left|
        a_w-a_t - 2\beta^\top (b_w-b_t) + \beta^\top (G_w-G_t)\beta
    \right|.
\]

\begin{lemma}
\label{lem:discrepancy_lower_local}
Let $v_w  = (G_w-G_t)\beta_w-(b_w-b_t)$. Assume \(v_w\neq 0\), and suppose that for some \(\rho_w>0\),
\[
    \beta_w \pm \rho_w \frac{v_w}{\|v_w\|_2}
    \in \Theta .
\]
Then
\[
    \mathrm{dis}(P_w,P_t)
    \ge
    2\rho_w
    \|v_w\|_2 .
\]
\end{lemma}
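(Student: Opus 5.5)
The plan is to evaluate the risk difference $D(\beta)=R^w(\beta)-R_t(\beta)$ at the two symmetric points $\beta_\pm=\beta_w\pm\rho_w u$, where $u=v_w/\|v_w\|_2$. Both points lie in $\Theta$ by assumption, so
\[
\mathrm{dis}(P_w,P_t)\ge\max\{|D(\beta_+)|,|D(\beta_-)|\}\ge\tfrac12|D(\beta_+)-D(\beta_-)|.
\]
The claim therefore reduces to computing this difference.

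To compute it, I use the expansion around $\beta_w$ from the proof of Proposition~\ref{prop:discrepancy_upper_local_moments}. Substituting $b_w-b_t=(G_w-G_t)\beta_w-v_w$ and writing $\beta=\beta_w+\tau$ gives the identity
\[
D(\beta_w+\tau)=\ell_w+2\tau^\top v_w+\tau^\top(G_w-G_t)\tau,
\qquad \ell_w=D(\beta_w).
\]
Checking this identity is a short algebraic step: expand $-2\beta^\top(G_w-G_t)\beta_w+\beta^\top(G_w-G_t)\beta$ with $\beta=\beta_w+\tau$. The terms $\pm\beta_w^\top(G_w-G_t)\beta_w$ combine with $a_w-a_t$ and $2\beta_w^\top v_w$ into $\ell_w$. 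The cross terms cancel, leaving $\tau^\top(G_w-G_t)\tau$ together with $2\tau^\top v_w$.

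Now take $\tau=\pm\rho_w u$. The constant term $\ell_w$ and the quadratic term $\rho_w^2u^\top(G_w-G_t)u$ are even in $\tau$, so they cancel in the difference. This leaves
\[
D(\beta_+)-D(\beta_-)=4\rho_w u^\top v_w=4\rho_w\|v_w\|_2.
\]
Halving gives $\mathrm{dis}(P_w,P_t)\ge 2\rho_w\|v_w\|_2$, as claimed.

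There is no real obstacle in this argument. The one point needing care is symmetrization: evaluating at a single point fails, because the first-order term may be offset by $\ell_w$ or by the curvature term. Using the pair $\beta_\pm$ removes every even-order contribution and isolates the linear term in $v_w$.
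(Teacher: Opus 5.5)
Your proof is correct and matches the paper's argument. Both evaluate $R^w-R_t$ at $\beta_w\pm\rho_w v_w/\|v_w\|_2$, bound the discrepancy below by half the absolute difference, and note that the even-order parts cancel, leaving $4\rho_w\|v_w\|_2$. The only cosmetic difference is that you obtain this difference through the expansion $\ell_w+2\tau^\top v_w+\tau^\top(G_w-G_t)\tau$ around $\beta_w$, whereas the paper expands the quadratic form directly.
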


\begin{proof}
Let
\[
    f(\beta)=R^w(\beta)-R_t(\beta)
    =
    a_w-a_t - 2\beta^\top (b_w-b_t) + \beta^\top (G_w-G_t) \beta ,
\]
and set \(u=v_w/\|v_w\|_2\). Then
\[
\begin{aligned}
    f(\beta_w+\rho_w u)-f(\beta_w-\rho_w u)
    &=
    4\rho_w u^\top((G_w-G_t) \beta_w-(b_w-b_t))  \\
    &=
    4\rho_w \|v_w\|_2 .
\end{aligned}
\]
Therefore,
\[
\begin{aligned}
    \mathrm{dis}(P_w,P_t)
    &\ge
    \max\{
        |f(\beta_w+\rho_w u)|,
        |f(\beta_w-\rho_w u)|
    \}  \\
    &\ge
    \frac12
    |f(\beta_w+\rho_w u)-f(\beta_w-\rho_w u)|  \\
    &=
    2\rho_w \|v_w\|_2 .
\end{aligned}
\]
\end{proof}

\begin{lemma}
\label{lem:discrepancy_lower_moments}
Assume \(0\in\Theta\). Then
\[
    \mathrm{dis}(P_w,P_t)
    \ge
    |a_w-a_t|.
\]
If, in addition, \(r\mathbb B_2^p = \{\beta \in \mathbb{R}^p : \|\beta\|_2 \le r\} \subseteq\Theta\), then
\[
    \mathrm{dis}(P_w,P_t)
    \ge
    2r\|b_w-b_t\|_2, \qquad     \mathrm{dis}(P_w,P_t)
    \ge
    \frac{r^2}{2}\|G_w - G_t\|_{\mathrm{op}} .
\]
\end{lemma}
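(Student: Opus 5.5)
We evaluate the quadratic function
\[
f(\beta)=R^w(\beta)-R_t(\beta)=a_w-a_t-2\beta^\top(b_w-b_t)+\beta^\top(G_w-G_t)\beta
\]
at well-chosen points of $\Theta$. We use $\mathrm{dis}(P_w,P_t)=\sup_{\beta\in\Theta}|f(\beta)|$ and exploit that combining values of $f$ at symmetric points isolates single coefficients, as in the proof of Lemma~\ref{lem:discrepancy_lower_local}.

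\emph{Response moment.} Since $0\in\Theta$, we have $\mathrm{dis}(P_w,P_t)\ge|f(0)|=|a_w-a_t|$.

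\emph{Cross moment.} Assume $r\mathbb B_2^p\subseteq\Theta$ and $b_w\ne b_t$ (otherwise there is nothing to show). Set $u=(b_w-b_t)/\|b_w-b_t\|_2$, so that $\pm ru\in\Theta$. The constant and quadratic terms are even in $\beta$ and cancel in the difference, giving
\[
f(-ru)-f(ru)=4r\,u^\top(b_w-b_t)=4r\|b_w-b_t\|_2.
\]
Hence $2\,\mathrm{dis}(P_w,P_t)\ge|f(-ru)|+|f(ru)|\ge4r\|b_w-b_t\|_2$, which is the claim.

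\emph{Feature covariance.} This is the only step needing care, because the constant term must be removed using the point $0$. Let $u$ be a unit eigenvector of the symmetric matrix $G_w-G_t$ whose eigenvalue $\lambda$ satisfies $|\lambda|=\|G_w-G_t\|_{\mathrm{op}}$. The symmetric sum eliminates the linear term:
\[
f(ru)+f(-ru)=2(a_w-a_t)+2r^2\lambda.
\]
Subtracting $2f(0)$ gives
\[
f(ru)+f(-ru)-2f(0)=2r^2\lambda.
\]
By the triangle inequality the left side is at most $4\,\mathrm{dis}(P_w,P_t)$ in absolute value. Therefore $\mathrm{dis}(P_w,P_t)\ge\tfrac{r^2}{2}|\lambda|=\tfrac{r^2}{2}\|G_w-G_t\|_{\mathrm{op}}$.

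The only thing to check is that all evaluation points $0,\pm ru$ lie in $\Theta$. This holds by the assumptions $0\in\Theta$ and $r\mathbb B_2^p\subseteq\Theta$.
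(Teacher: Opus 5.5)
Your proof is correct and follows essentially the same route as the paper: evaluation at $0$, the antisymmetric difference of $f(\pm ru)$ along $u=(b_w-b_t)/\|b_w-b_t\|_2$, and a symmetric combination along a top eigenvector of $G_w-G_t$, with the constant term removed using $f(0)$. The only cosmetic difference is that you bound the second difference $f(ru)+f(-ru)-2f(0)$ by $4\,\mathrm{dis}(P_w,P_t)$ directly, whereas the paper combines $\max\{|f(ru)|,|f(-ru)|\}\ge|a_w-a_t+r^2\lambda|$ with $|f(0)|=|a_w-a_t|$; both give the same constant $r^2/2$.
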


\begin{proof}
The first claim follows by evaluating the discrepancy at \(\beta=0\). For the
second claim, if \(b_w\ne b_t\), take \(u=(b_w-b_t)/\|b_w-b_t\|_2\); otherwise the claim is trivial. Since \(ru,-ru\in\Theta\),
\[
    f(ru)-f(-ru)=-4r u^\top (b_w-b_t)=-4r\|b_w-b_t\|_2.
\]
Hence
\[
    \mathrm{dis}(P_w,P_t)
    \ge
    \frac12|f(ru)-f(-ru)|
    =
    2r\|b_w-b_t\|_2.
\]
For the third claim, let \(u\) be a unit eigenvector corresponding to an
eigenvalue of \(G_w - G_t\) with absolute value
\(\|G_w - G_t\|_{\mathrm{op}}\). Since
\[
    \frac{f(ru)+f(-ru)}{2}
    =
    a_w-a_t+r^2u^\top(G_w - G_t) u,
\]
we have
\[
    \max\{|f(ru)|,|f(-ru)|\}
    \ge
    |a_w-a_t+r^2u^\top (G_w - G_t) u|.
\]
Combining this with \(|f(0)|=|a_w-a_t|\) gives
\[
    \mathrm{dis}(P_w,P_t)
    \ge
    \frac{r^2}{2}\|G_w - G_t\|_{\mathrm{op}} .
\]
\end{proof}

\begin{corollary}
    Under the assumptions of Lemmas \ref{lem:discrepancy_lower_local} and \ref{lem:discrepancy_lower_moments}, it holds
    \begin{align*}
        \mathrm{dis}(P_w,P_t) \ge \max \left\{2\rho_w\|v_w\|_2, |a_w-a_t|, 2r\|b_w-b_t\|_2, \frac{r^2}{2}\|G_w - G_t\|_{\mathrm{op}}\right\}.
    \end{align*}
\end{corollary}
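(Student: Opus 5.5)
This corollary follows by combining the two preceding lemmas; no new computation should be needed. Under the hypotheses of Lemma~\ref{lem:discrepancy_lower_local}, namely $v_w\neq0$ and $\beta_w\pm\rho_w v_w/\|v_w\|_2\in\Theta$, we already have $\mathrm{dis}(P_w,P_t)\ge 2\rho_w\|v_w\|_2$. Under the hypotheses of Lemma~\ref{lem:discrepancy_lower_moments}, namely $0\in\Theta$ and $r\mathbb B_2^p\subseteq\Theta$, we get the three bounds $\mathrm{dis}(P_w,P_t)\ge|a_w-a_t|$, $\mathrm{dis}(P_w,P_t)\ge 2r\|b_w-b_t\|_2$ and $\mathrm{dis}(P_w,P_t)\ge \tfrac{r^2}{2}\|G_w-G_t\|_{\mathrm{op}}$.

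The plan is as follows.
\begin{enumerate}
\item Record these four inequalities. Each bounds the same quantity $\mathrm{dis}(P_w,P_t)$ from below.
\item Conclude with the elementary fact that if $x\ge y_k$ for every $k$, then $x\ge\max_k y_k$. This gives the stated maximum.
\end{enumerate}

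The only point needing care is the degenerate case $v_w=0$, where Lemma~\ref{lem:discrepancy_lower_local} is formally not applicable. There the first entry $2\rho_w\|v_w\|_2$ equals $0$, and $\mathrm{dis}(P_w,P_t)\ge0$ holds trivially because the discrepancy is a supremum of absolute values. So the maximum bound still holds, and I would mention this in one line. There is no real obstacle beyond checking that the hypotheses of both lemmas are assumed simultaneously, which the statement explicitly does.
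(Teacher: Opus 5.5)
Your proposal is correct and matches the paper's proof: each lemma gives a lower bound on $\mathrm{dis}(P_w,P_t)$, and the paper likewise just takes the maximum of the four bounds from Lemmas~\ref{lem:discrepancy_lower_local} and~\ref{lem:discrepancy_lower_moments}. Your remark on the case $v_w=0$ is harmless but unnecessary, because the corollary assumes the hypotheses of Lemma~\ref{lem:discrepancy_lower_local}, and these already include $v_w\neq0$.
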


\begin{proof}
    The result follows by taking the maximum of the individual terms.
\end{proof}

\begin{proposition}[Full version of Proposition~\ref{prop:discrepancy-asymp}]\label{prop:discrepancy-full}
Let $\Theta=B\mathbb B_2^p$ and assume $\|\beta_w\|_2\le B-\rho$ for some $\rho\in(0,B)$. Then
\begin{align*}
 \max\Big\{|a_w-a_t|,\,2\rho\|v_w\|_2,\,\tfrac{B^2}{2}\|G_w-G_t\|_{\mathrm{op}}\Big\}
 &\le\mathrm{dis}(P_w,P_t)\\
 &\le|a_w-a_t|+2B\|v_w\|_2+3B^2\|G_w-G_t\|_{\mathrm{op}}.
\end{align*}
\end{proposition}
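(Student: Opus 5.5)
The upper bound is exactly Proposition~\ref{prop:discrepancy_upper_local_moments}. Its hypotheses hold because $\Theta=B\mathbb B_2^p$ and $\|\beta_w\|_2\le B-\rho\le B$. For the lower bound I would verify the three terms of the maximum one at a time, using the lemmas on lower bounds, and then take the maximum as in the preceding corollary.

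\emph{Terms one and three.} Since $0\in\Theta=B\mathbb B_2^p$, Lemma~\ref{lem:discrepancy_lower_moments} gives $\mathrm{dis}(P_w,P_t)\ge|a_w-a_t|$. Taking $r=B$, so that $r\mathbb B_2^p=\Theta$, the same lemma gives $\mathrm{dis}(P_w,P_t)\ge \tfrac{B^2}{2}\|G_w-G_t\|_{\mathrm{op}}$.

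\emph{Term two.} This term needs the localization margin. If $v_w=0$ the bound is trivial. Otherwise set $u=v_w/\|v_w\|_2$. By the triangle inequality,
\[
\|\beta_w\pm\rho u\|_2\le \|\beta_w\|_2+\rho\le B,
\]
so both points lie in $\Theta$. Lemma~\ref{lem:discrepancy_lower_local} with $\rho_w=\rho$ then yields $\mathrm{dis}(P_w,P_t)\ge 2\rho\|v_w\|_2$.

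Combining the three bounds gives the maximum on the left. None of the steps is really an obstacle; the only point needing care is checking that $\beta_w\pm\rho u$ is feasible, and that is exactly what the assumption $\|\beta_w\|_2\le B-\rho$ provides. Proposition~\ref{prop:discrepancy-asymp} then follows, because a maximum of three nonnegative terms is comparable to their sum up to a factor of $3$, with constants depending only on $B$ and $\rho$.
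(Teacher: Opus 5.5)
Your proposal is correct and follows the paper's proof essentially verbatim. The upper bound comes from Proposition~\ref{prop:discrepancy_upper_local_moments}, the terms $|a_w-a_t|$ and $\tfrac{B^2}{2}\|G_w-G_t\|_{\mathrm{op}}$ come from Lemma~\ref{lem:discrepancy_lower_moments} with $r=B$, and $2\rho\|v_w\|_2$ comes from Lemma~\ref{lem:discrepancy_lower_local} with $\rho_w=\rho$ once $\beta_w\pm\rho u\in\Theta$ is checked (it is trivial if $v_w=0$), after which the same max-versus-sum comparison gives Proposition~\ref{prop:discrepancy-asymp}.
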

\begin{proof}
The upper bound is Proposition~\ref{prop:discrepancy_upper_local_moments}. For the lower bound, $0\in\Theta$ and $B\mathbb B_2^p\subseteq\Theta$, so Lemma~\ref{lem:discrepancy_lower_moments} applies with $r=B$ and gives $\mathrm{dis}(P_w,P_t)\ge|a_w-a_t|$ and $\mathrm{dis}(P_w,P_t)\ge\frac{B^2}{2}\|G_w-G_t\|_{\mathrm{op}}$. Since $\|\beta_w\|_2\le B-\rho$, we have $\beta_w\pm\rho u\in\Theta$ for every unit vector $u$, so Lemma~\ref{lem:discrepancy_lower_local} with $\rho_w=\rho$ gives $\mathrm{dis}(P_w,P_t)\ge2\rho\|v_w\|_2$ if $v_w\ne0$; this bound is trivial if $v_w=0$.

Finally, write $S_w=|a_w-a_t|+\|v_w\|_2+\|G_w-G_t\|_{\mathrm{op}}$. Since the maximum of three nonnegative numbers is at least their average,
\[
 \tfrac13\min\{1,2\rho,\tfrac{B^2}{2}\}\,S_w
 \le\mathrm{dis}(P_w,P_t)
 \le\max\{1,2B,3B^2\}\,S_w,
\]
which proves Proposition~\ref{prop:discrepancy-asymp}.
\end{proof}

\begin{proposition}[Full version of Proposition~\ref{prop:linear-localized-discrepancy}]\label{prop:localized-full}
Let $\beta_w$ be an interior oracle, $G_w\succ0$, $\ell_w=R^w(\beta_w)-R_t(\beta_w)$, and $\tilde\Lambda=G_w^{-1/2}(G_w-G_t)G_w^{-1/2}$.
\begin{enumerate}[label=(\roman*)]
\item For every $\varepsilon\ge0$,
\[
 \mathrm{dis}_\varepsilon^{\mathrm{loc}}(P_w,P_t)
 \le|\ell_w|+2\sqrt{\varepsilon}\|G_w^{-1/2}v_w\|_2+\varepsilon\|\tilde\Lambda\|_{\mathrm{op}}.
\]
\item If in addition $\{\beta_w+\tau:\tau^\top G_w\tau\le\varepsilon\}\subseteq\Theta$, then
\[
 \mathrm{dis}_\varepsilon^{\mathrm{loc}}(P_w,P_t)
 \ge\max\Big\{|\ell_w|,\,2\sqrt{\varepsilon}\|G_w^{-1/2}v_w\|_2,\,\tfrac{\varepsilon}{2}\|\tilde\Lambda\|_{\mathrm{op}}\Big\}.
\]
\end{enumerate}
\end{proposition}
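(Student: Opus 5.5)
The plan is to work throughout with the expansion around the weighted oracle already recorded in Section~\ref{sec:discrepancy-linear}. Writing $\beta=\beta_w+\tau$, we have
\[
 f(\tau):=R^w(\beta_w+\tau)-R_t(\beta_w+\tau)=\ell_w+2\tau^\top v_w+\tau^\top(G_w-G_t)\tau .
\]
Since $\beta_w$ is interior, $G_w\beta_w=b_w$, and therefore $R^w(\beta_w+\tau)-R^w(\beta_w)=\tau^\top G_w\tau$. Hence $\Hcal_{w,\varepsilon}$ is exactly the set of feasible $\tau$ with $\tau^\top G_w\tau\le\varepsilon$. I will verify this expansion briefly first, using $b_w-b_t=(G_w-G_t)\beta_w-v_w$ as in the proof of Proposition~\ref{prop:discrepancy_upper_local_moments}. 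Then I will substitute $\tau=G_w^{-1/2}u$, which is valid since $G_w\succ0$. This turns the constraint into $\|u\|_2\le\sqrt\varepsilon$ and gives
\[
 f=\ell_w+2u^\top G_w^{-1/2}v_w+u^\top\tilde\Lambda u .
\]

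For (i), I will take any feasible $u$ with $\|u\|_2\le\sqrt\varepsilon$ and apply the triangle inequality. The Cauchy--Schwarz inequality bounds the linear term by $2\sqrt\varepsilon\|G_w^{-1/2}v_w\|_2$, and the quadratic term is at most $\|u\|_2^2\|\tilde\Lambda\|_{\mathrm{op}}\le\varepsilon\|\tilde\Lambda\|_{\mathrm{op}}$. Taking the supremum over the feasible set, which is a subset of the ball, gives the upper bound. No feasibility assumption is needed here.

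For (ii), the inclusion assumption guarantees that the whole ball $\|u\|_2\le\sqrt\varepsilon$ is feasible, so I can mimic Lemmas~\ref{lem:discrepancy_lower_local} and~\ref{lem:discrepancy_lower_moments}. The three lower bounds come from three choices of test points.
\begin{enumerate}[label=(\alph*)]
\item Evaluating at $u=0$ gives $|\ell_w|$.
\item For the linear term, let $e=G_w^{-1/2}v_w/\|G_w^{-1/2}v_w\|_2$; the case $v_w=0$ is trivial. Then $f(\sqrt\varepsilon e)-f(-\sqrt\varepsilon e)=4\sqrt\varepsilon\|G_w^{-1/2}v_w\|_2$, so the larger of the two absolute values is at least $2\sqrt\varepsilon\|G_w^{-1/2}v_w\|_2$.
\item For the curvature term, take a unit eigenvector $e$ of $\tilde\Lambda$ whose eigenvalue $\lambda$ satisfies $|\lambda|=\|\tilde\Lambda\|_{\mathrm{op}}$. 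Then $\tfrac12\bigl(f(\sqrt\varepsilon e)+f(-\sqrt\varepsilon e)\bigr)=\ell_w+\varepsilon\lambda$, so the supremum is at least $|\ell_w+\varepsilon\lambda|$. Combining this with $|f(0)|=|\ell_w|$ gives $\max\{|\ell_w|,|\ell_w+\varepsilon\lambda|\}\ge\tfrac{\varepsilon}{2}|\lambda|$.
\end{enumerate}
Finally, as in Proposition~\ref{prop:discrepancy-full}, the maximum of three nonnegative numbers is at least one third of their sum. Together with (i), this yields the two-sided $\asymp$ with absolute constants stated in Proposition~\ref{prop:linear-localized-discrepancy}.

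The main obstacle is only bookkeeping: making sure the localized set coincides with the ellipsoid. This needs interiority of $\beta_w$, so that the linear term of $R^w$ vanishes, and it needs the feasibility inclusion, so that the symmetric test points $\pm\sqrt\varepsilon\,G_w^{-1/2}e$ actually lie in $\Theta$. Everything else is the same symmetrization argument used for the global discrepancy.
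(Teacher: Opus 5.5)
Your proposal is correct and follows essentially the same route as the paper. Both arguments expand around $\beta_w$ and use the normal equation to identify $\Hcal_{w,\varepsilon}$ with the feasible part of the ellipsoid $\tau^\top G_w\tau\le\varepsilon$. Both bound the linear and quadratic terms via Cauchy--Schwarz and the operator norm after the change of variables $u=G_w^{1/2}\tau$, and both get the lower bounds from the test points $0$ and $\pm\sqrt{\varepsilon}\,G_w^{-1/2}e$: the difference handles the tilt term, and the sum, combined with $|\ell_w|$, handles the curvature term.
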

\begin{proof}[Proof of Proposition~\ref{prop:localized-full}(i)]
Write $f(\beta)=R^w(\beta)-R_t(\beta)=(a_w-a_t)-2\beta^\top(b_w-b_t)+\beta^\top(G_w-G_t)\beta$. For $\beta=\beta_w+\tau$,
\[
 f(\beta_w+\tau)
 =f(\beta_w)+2\tau^\top\big((G_w-G_t)\beta_w-(b_w-b_t)\big)+\tau^\top(G_w-G_t)\tau,
\]
that is, with $\ell_w=f(\beta_w)$ and $v_w=(G_w-G_t)\beta_w-(b_w-b_t)$,
\begin{equation}\label{eq:linear-localized-difference}
R^w(\beta)-R_t(\beta)
 =\ell_w+2\tau^\top v_w+\tau^\top(G_w-G_t)\tau.
\end{equation} Since $\beta_w$ is interior, $G_w\beta_w=b_w$ and
\[
 R^w(\beta)-R^w(\beta_w)=2\tau^\top(G_w\beta_w-b_w)+\tau^\top G_w\tau=\tau^\top G_w\tau .
\]
Hence every $h_\beta\in\Hcal_{w,\varepsilon}$ satisfies $\tau^\top G_w\tau\le\varepsilon$. Set $u=G_w^{1/2}\tau$, so that $\|u\|_2^2\le\varepsilon$. The linear and quadratic terms satisfy
\begin{align*}
|\tau^\top v_w|
&\le\sqrt{\varepsilon}\|G_w^{-1/2}v_w\|_2,\\
|\tau^\top(G_w-G_t)\tau|
&\le\varepsilon\,
\|G_w^{-1/2}(G_w-G_t)G_w^{-1/2}\|_{\mathrm{op}},
\end{align*}
which proves (i).
\end{proof}

\begin{proof}[Proof of Proposition~\ref{prop:localized-full}(ii)]
Let $F(\tau)$ denote the right-hand side of \eqref{eq:linear-localized-difference} and $\tilde\Lambda=G_w^{-1/2}(G_w-G_t)G_w^{-1/2}$. Since $\{\beta_w+\tau:\tau^\top G_w\tau\le\varepsilon\}\subseteq\Theta$ by assumption and $R^w(\beta)-R^w(\beta_w)=\tau^\top G_w\tau$ by the proof of (i), $\Hcal_{w,\varepsilon}$ is the complete ellipsoid, so $\mathrm{dis}_\varepsilon^{\mathrm{loc}}(P_w,P_t)=\sup\{|F(\tau)|:\tau^\top G_w\tau\le\varepsilon\}$.

First, $\tau=0$ is feasible and $|F(0)|=|\ell_w|$.

Second, if $v_w\ne0$, let
\[
u_*=\sqrt{\varepsilon}\,
\frac{G_w^{-1/2}v_w}{\|G_w^{-1/2}v_w\|_2},
\qquad
\tau_\pm=\pm G_w^{-1/2}u_* .
\]
Both $\tau_\pm$ are feasible, and the constant and quadratic terms of $F$ agree at $\tau_+$ and $\tau_-$. Hence
\[
\max\{|F(\tau_+)|,|F(\tau_-)|\}
\ge\frac12|F(\tau_+)-F(\tau_-)|
=2\,\tau_+^\top v_w
=2\sqrt{\varepsilon}\|G_w^{-1/2}v_w\|_2.
\]
If $v_w=0$, this bound is trivial.

Third, let $u$ be a unit eigenvector of the symmetric matrix $\tilde\Lambda$ for an eigenvalue $\lambda$ with $|\lambda|=\|\tilde\Lambda\|_{\mathrm{op}}$, and let $\tau_\pm=\pm\sqrt{\varepsilon}\,G_w^{-1/2}u$. Both are feasible, the linear terms of $F$ cancel in the sum, and
\[
F(\tau_+)+F(\tau_-)
=2\ell_w+2\varepsilon\lambda .
\]
Therefore
\[
\mathrm{dis}_\varepsilon^{\mathrm{loc}}(P_w,P_t)
\ge\frac12|F(\tau_+)+F(\tau_-)|
\ge\varepsilon\|\tilde\Lambda\|_{\mathrm{op}}-|\ell_w| .
\]
Combining this with the first bound gives
\[
 \mathrm{dis}_\varepsilon^{\mathrm{loc}}(P_w,P_t)\ge\max\{|\ell_w|,\varepsilon\|\tilde\Lambda\|_{\mathrm{op}}-|\ell_w|\}\ge\tfrac{\varepsilon}{2}\|\tilde\Lambda\|_{\mathrm{op}},
\]
which proves (ii).

Finally, by (i) and (ii),
\begin{align*}
 \tfrac16\big(|\ell_w|+2\sqrt{\varepsilon}\|G_w^{-1/2}v_w\|_2+\varepsilon\|\tilde\Lambda\|_{\mathrm{op}}\big)
 &\le\tfrac13\big(|\ell_w|+2\sqrt{\varepsilon}\|G_w^{-1/2}v_w\|_2+\tfrac{\varepsilon}{2}\|\tilde\Lambda\|_{\mathrm{op}}\big)
 \le\mathrm{dis}_\varepsilon^{\mathrm{loc}}(P_w,P_t)\\
 &\le|\ell_w|+2\sqrt{\varepsilon}\|G_w^{-1/2}v_w\|_2+\varepsilon\|\tilde\Lambda\|_{\mathrm{op}},
\end{align*}
which proves Proposition~\ref{prop:linear-localized-discrepancy}.
\end{proof}

\subsection{Drift Scenarios}

\subsubsection{Feature Drift}\label{app:feature-moment-proof}
We consider the setting of Section~\ref{sec:feature-moment-drift}: $Y_i=\beta^{*\top}Z_i+\eta_i$ with $\E[\eta_i\mid Z_i]=0$, $\E[\eta_i^2]=\sigma^2$, and $\|\beta^*\|_2<B$, while $G_i$ may change over time. Write $\Lambda=G_w-G_t$.

\begin{proposition}\label{prop:feature-drift-full}
Let $G_w\succ0$ and $G_t\succ0$. Then $\beta_w=\beta_t=\beta^*$, so the oracle drift is zero, and
\[
 (B-\|\beta^*\|_2)^2\|\Lambda\|_{\mathrm{op}}
 \le\mathrm{dis}(P_w,P_t)
 \le(B+\|\beta^*\|_2)^2\|\Lambda\|_{\mathrm{op}}.
\]
If $\varepsilon\le\lambda_{\min}(G_w)(B-\|\beta^*\|_2)^2$, then
\[
 \mathrm{dis}_\varepsilon^{\mathrm{loc}}(P_w,P_t)
 =\varepsilon\|G_w^{-1/2}\Lambda G_w^{-1/2}\|_{\mathrm{op}}.
\]
\end{proposition}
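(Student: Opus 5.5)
The plan is to reduce everything to the quadratic form $\tau^\top\Lambda\tau$ in the displacement $\tau=\beta-\beta^*$, and then read off all three claims from it.

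\emph{Moments and oracles.} First I would compute the moments in this model. Since $\E[\eta_i\mid Z_i]=0$, we get $b_i=G_i\beta^*$ and $a_i=\beta^{*\top}G_i\beta^*+\sigma^2$. This gives $R_i(\beta)=\sigma^2+(\beta-\beta^*)^\top G_i(\beta-\beta^*)$, and after averaging $R^w(\beta)=\sigma^2+(\beta-\beta^*)^\top G_w(\beta-\beta^*)$. Since $G_w\succ0$ and $G_t\succ0$, both risks are strictly convex with unique unconstrained minimizer $\beta^*$. Because $\|\beta^*\|_2<B$, this minimizer lies in $\Theta$, so $\beta_w=\beta_t=\beta^*$ and the oracle drift vanishes. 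As a consistency check, $v_w=(G_w-G_t)\beta^*-(G_w-G_t)\beta^*=0$, and also $\ell_w=R^w(\beta^*)-R_t(\beta^*)=\sigma^2-\sigma^2=0$.

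\emph{Global discrepancy.} Writing $\tau=\beta-\beta^*$, the difference is the pure quadratic
\[
 R^w(\beta)-R_t(\beta)=\tau^\top\Lambda\tau.
\]
This is also \eqref{eq:linear-localized-difference} with $\ell_w=0$ and $v_w=0$.
\begin{itemize}
\item For the upper bound, $\|\beta\|_2\le B$ gives $\|\tau\|_2\le B+\|\beta^*\|_2$, hence $|\tau^\top\Lambda\tau|\le(B+\|\beta^*\|_2)^2\|\Lambda\|_{\mathrm{op}}$.
\item For the lower bound, take a unit eigenvector $u$ of the symmetric matrix $\Lambda$ whose eigenvalue $\lambda$ satisfies $|\lambda|=\|\Lambda\|_{\mathrm{op}}$. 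Set $\beta=\beta^*+(B-\|\beta^*\|_2)u$. By the triangle inequality this point lies in the ball, and it attains $|\tau^\top\Lambda\tau|=(B-\|\beta^*\|_2)^2\|\Lambda\|_{\mathrm{op}}$.
\end{itemize}

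\emph{Localized discrepancy.} Since $R^w(\beta)-R^w(\beta_w)=\tau^\top G_w\tau$, the set $\Hcal_{w,\varepsilon}$ is the feasible part of the ellipsoid $\{\tau^\top G_w\tau\le\varepsilon\}$. The step needing care is showing that the whole ellipsoid is feasible. Every such $\tau$ satisfies $\|\tau\|_2^2\le\varepsilon/\lambda_{\min}(G_w)\le(B-\|\beta^*\|_2)^2$ by the assumption on $\varepsilon$, so $\|\beta\|_2\le B$ and the set is the full ellipsoid. Then I substitute $u=G_w^{1/2}\tau$, which ranges over the full ball $\|u\|_2\le\sqrt\varepsilon$. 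This gives
\[
\sup_u|u^\top\tilde\Lambda u|=\varepsilon\|\tilde\Lambda\|_{\mathrm{op}},\qquad \tilde\Lambda=G_w^{-1/2}\Lambda G_w^{-1/2}.
\]
The supremum is attained at $\sqrt\varepsilon$ times a top-modulus eigenvector of $\tilde\Lambda$, which yields equality rather than just the two-sided bound of Proposition~\ref{prop:localized-full}.
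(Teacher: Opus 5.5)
Your proposal is correct and follows the paper's proof essentially step by step. It uses the same moment computation giving $R_i(\beta)=\sigma^2+(\beta-\beta^*)^\top G_i(\beta-\beta^*)$, the same triangle-inequality upper bound and eigenvector test point $\beta^*+(B-\|\beta^*\|_2)u$ for the global discrepancy, and the same ellipsoid-feasibility check followed by the substitution $u=G_w^{1/2}\tau$ and the absolute Rayleigh quotient for the localized discrepancy.
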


\begin{proof}
\emph{Oracles.} Since $b_i=G_i\beta^*$ and $a_i=\sigma^2+\beta^{*\top}G_i\beta^*$,
\[
 R_i(\beta)=a_i-2\beta^\top b_i+\beta^\top G_i\beta
 =\sigma^2+(\beta-\beta^*)^\top G_i(\beta-\beta^*),
\]
and the same identity holds for $R^w$ with $G_w$. As $G_w,G_t\succ0$ and $\beta^*\in\Theta$, both risks are uniquely minimized at $\beta^*$, so $\beta_w=\beta_t=\beta^*$.

\emph{Discrepancy.} By the previous display,
\[
 R^w(\beta)-R_t(\beta)=(\beta-\beta^*)^\top \Lambda(\beta-\beta^*).
\]
For $\|\beta\|_2\le B$, we have $\|\beta-\beta^*\|_2\le B+\|\beta^*\|_2$, which gives the upper bound. For the lower bound, let $u$ be a unit eigenvector of $\Lambda$ for an eigenvalue $\lambda$ with $|\lambda|=\|\Lambda\|_{\mathrm{op}}$, and set $\beta=\beta^*+(B-\|\beta^*\|_2)u$. Then $\|\beta\|_2\le B$ and
\[
 |R^w(\beta)-R_t(\beta)|=(B-\|\beta^*\|_2)^2\|\Lambda\|_{\mathrm{op}}.
\]

\emph{Localized discrepancy.} Write $\tau=\beta-\beta^*$. Then $R^w(\beta)-R^w(\beta^*)=\tau^\top G_w\tau$, so $\Hcal_{w,\varepsilon}$ is the feasible part of the ellipsoid $\{\tau^\top G_w\tau\le\varepsilon\}$. On this ellipsoid, $\|\tau\|_2^2\le\varepsilon/\lambda_{\min}(G_w)\le(B-\|\beta^*\|_2)^2$, so it lies entirely in $\Theta$. With $u=G_w^{1/2}\tau$,
\[
 \mathrm{dis}_\varepsilon^{\mathrm{loc}}(P_w,P_t)
 =\sup_{\|u\|_2^2\le\varepsilon}
 |u^\top G_w^{-1/2}\Lambda G_w^{-1/2}u|
 =\varepsilon\|G_w^{-1/2}\Lambda G_w^{-1/2}\|_{\mathrm{op}},
\]
where the last step uses the absolute Rayleigh quotient.
\end{proof}

Proposition~\ref{prop:feature-moment-drift} is the last statement of Proposition~\ref{prop:feature-drift-full}.

\subsubsection{Smooth Drift}
Let $ \Hcal=\{h_\beta(z)=\beta^\top z:\|\beta\|_2\le B\}$ and assume the square-loss linear model $Y_i=\beta_i^{*\top}Z_i+\eta_i$ with
$\E[\eta_i\mid Z_i]=0$ and $\|\beta_i^*\|_2\le B$, and define $ G_i=\E_{P_i}[ZZ^\top]$, $b_i=\E_{P_i}[ZY]$.

Consider the smooth-drift setting where $\psi_i(t)=|t-i|^\nu$.

\begin{proof}[Proof of Proposition~\ref{prop:smooth-local}]
By the triangle inequality,
\begin{align*}
\|(G_w-G_t)\beta_w-(b_w-b_t)\|_2
&=
\left\|
\sum_{i=1}^{t-1}w_i
((G_i-G_t)\beta_w-(b_i-b_t))
\right\|_2 \\
&\le
\sum_{i=1}^{t-1} |w_i|
\|(G_i-G_t)\beta_w-(b_i-b_t)\|_2 \\
&\le
L_v
\sum_{i=1}^{t-1}|w_i| |t-i|^\nu.
\end{align*}
Since the left-hand side is $\|v_w\|_2$, Proposition~\ref{prop:localnormalcontrol} gives
\[
    (R_t(\bar h_w)-R_t(\bar h_t))^{1/2}
    \lesssim
    \|\bar h_w-\bar h_t\|_{L_2(P_t)}
    \le
    \frac{\sqrt{\Gamma}}{\gamma}
    L_v\sum_{i=1}^{t-1}|w_i| |t-i|^\nu.
\]
\end{proof}

\begin{remark}\label{rem:smooth-local-sufficient}
If $\|G_i-G_t\|_{\mathrm{op}}\le L_G|t-i|^\nu$ and $\|\beta_i^*-\beta_t^*\|_2\le L_\beta|t-i|^\nu$ for all $i<t$, the assumption of Proposition~\ref{prop:smooth-local} holds with $L_v=2BL_G+\Gamma L_\beta$. Indeed, $\E[\eta_i\mid Z_i]=0$ gives $b_i=G_i\beta_i^*$, so $b_i-b_t=(G_i-G_t)\beta_i^*+G_t(\beta_i^*-\beta_t^*)$. Since $\|\beta_w\|_2,\|\beta_i^*\|_2\le B$,
\[
\|(G_i-G_t)\beta_w-(b_i-b_t)\|_2
\le 2B\|G_i-G_t\|_{\mathrm{op}}+\Gamma\|\beta_i^*-\beta_t^*\|_2
\le(2BL_G+\Gamma L_\beta)|t-i|^\nu.
\]
\end{remark}

\begin{proof}[Proof of Proposition~\ref{prop:smooth-discrepancy}]
For square loss, $ R_i(\beta)=a_i-2\beta^\top b_i+\beta^\top G_i\beta$.
Thus, with $ a_w=\sum_{i=1}^{t-1}w_ia_i,$
we have
\[
    R^w(\beta)-R_t(\beta)
    =
    (a_w-a_t)
    -
    2\beta^\top(b_w-b_t)
    +
    \beta^\top(G_w-G_t)\beta .
\]
By the smooth-drift assumptions,
\[
    |a_w-a_t|
    \le
    L_aM_w(\psi;t),
    \qquad
    \|G_w-G_t\|_{\mathrm{op}}
    \le
    L_G M_w(\psi;t),
    \qquad
    \|b_w-b_t\|_2
    \le
    L_b M_w(\psi;t).
\]
Therefore, for every \(\|\beta\|_2\le B\),
\begin{align*}
|R^w(\beta)-R_t(\beta)|
&\le
|a_w-a_t|
+
2|\beta^\top(b_w-b_t)|
+
|\beta^\top(G_w-G_t)\beta| \\
&\le
\left(
    L_a+2BL_b+B^2L_G
\right)
M_w(\psi;t).
\end{align*}
Taking the supremum over $\beta\in\Theta$ gives the same bound for $\mathrm{dis}(P_w,P_t)$.
Combining this with Proposition~\ref{prop:discrepancy} gives the result.
\end{proof}

\subsection{Estimation Error}\label{app:linear-estimation}

For \(\Hcal=\{h_\beta(z)=\beta^\top z:\beta\in\Theta\}\), the following statement makes the
dimension dependence in the linear-model estimation rate explicit.
Since $|Y|\le B$, $\|\beta\|_2\le B$, and $\|Z\|_2\le1$, the square loss is bounded by $4B^2$ and $4B$-Lipschitz in the prediction. The unit normalizations in \eqref{cond:cr-simplify} may therefore be replaced by these fixed $B$-dependent bounds.

The covering numbers below follow from the following standard bound.
\begin{lemma}\label{lem:norm-ball-covering}
Let $\|\cdot\|$ be a norm on $\mathbb R^d$ and $S=\{x\in\mathbb R^d:\|x\|\le r\}$ for some $r>0$. Then, for every $\epsilon>0$,
\[
 N(\epsilon,S,\|\cdot\|)\le\Bigl(1+\frac{2r}{\epsilon}\Bigr)^d.
\]
\end{lemma}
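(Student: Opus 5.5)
The plan is the classical volumetric argument. Let $\{x_1,\ldots,x_M\}\subseteq S$ be a maximal $\epsilon$-separated subset, so that $\|x_j-x_k\|>\epsilon$ for $j\ne k$. Such a set is finite; this follows from the volume bound below, applied to any finite separated subfamily. By maximality, every $x\in S$ lies within distance $\epsilon$ of some $x_j$. Otherwise $x$ could be added to the family without destroying separation. Hence $\{x_j\}$ is an $\epsilon$-net of $S$ with centers in $S$, and $N(\epsilon,S,\|\cdot\|)\le M$. It therefore suffices to bound the packing number $M$.

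Next, consider the balls $x_j+\frac{\epsilon}{2}\mathbb B$, where $\mathbb B=\{x:\|x\|\le1\}$ is the unit ball of the given norm. Because the centers are more than $\epsilon$ apart, the triangle inequality shows that these balls have pairwise disjoint interiors. Each ball is contained in $(r+\epsilon/2)\mathbb B$, since $\|x_j\|\le r$. Let $\mathrm{vol}$ denote Lebesgue measure on $\mathbb R^d$. The set $\mathbb B$ is convex, symmetric, compact and has nonempty interior, because all norms on $\mathbb R^d$ are equivalent. Hence $0<\mathrm{vol}(\mathbb B)<\infty$, and $\mathrm{vol}(s\mathbb B)=s^d\,\mathrm{vol}(\mathbb B)$ by homogeneity. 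Comparing volumes gives
\[
 M\Bigl(\frac{\epsilon}{2}\Bigr)^d\mathrm{vol}(\mathbb B)\le\Bigl(r+\frac{\epsilon}{2}\Bigr)^d\mathrm{vol}(\mathbb B),
\]
so that $M\le(1+2r/\epsilon)^d$. The same inequality applied to any finite $\epsilon$-separated family shows that such families have at most $(1+2r/\epsilon)^d$ elements. This justifies the existence of a finite maximal one, for instance by greedy selection, which must terminate.

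The only step needing any care is the disjointness and volume comparison. In particular, one must check that the balls' boundaries, which may overlap, form a set of measure zero. This is handled by using open balls of radius $\epsilon/2$, which are genuinely disjoint and have the same volume as the closed ones. Everything else is routine.
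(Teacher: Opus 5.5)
Your volumetric packing argument is correct and is the standard proof in the reference the paper cites (Vershynin, Section~4.2); the paper defers to that reference rather than reproducing it. A minor remark: because your centers satisfy $\|x_j-x_k\|>\epsilon$ strictly, the closed balls of radius $\epsilon/2$ are already pairwise disjoint, so the concern about overlapping boundaries never arises.
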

\begin{proof}
See, e.g., \citet[Section~4.2]{vershynin2018high}.
\end{proof}

\begin{proposition}\label{prop:linear-estimation}
Suppose the conditions of Theorem~\ref{thm:cr-fast-full} and (S) hold,
\(\sup_z\|z\|_2\le1\), \(\Theta\subseteq B\mathbb B_2^p\), \(\log N_1(\epsilon,\Wcal)\lesssim\log(n/\epsilon)\), \(C_\Wcal\ge n^{-1/2}\), and \(p\log n\le n\). Then, with probability at least \(1-2\delta\), for every
\(w\in\Wcal\),
\[
 R^w(\hat h_w)-R^w(\bar h_w)
 \lesssim p\|w\|_2^2\log^2n\,\log^2(1/\delta).
\]
\end{proposition}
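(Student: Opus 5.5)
The plan is to reduce the statement to Proposition~\ref{prop:cr-rate-simplify} with $\xi=0$ and $D\asymp p$. Once the complexity condition \eqref{cond:cr-rate-complexities} is verified in this form, that proposition gives, with probability at least $1-2\delta$ and uniformly over $w\in\Wcal$,
\[
R^w(\hat h_w)-R^w(\bar h_w)\lesssim p\|w\|_2^{2}\log^2 n\,\log^2(1/\delta).
\]
This is exactly the claim. The remaining hypotheses of Proposition~\ref{prop:cr-rate-simplify} are all assumed directly: (S), $C_\Wcal\ge n^{-1/2}$, and $D\log n\le n$, the last coming from $p\log n\le n$ after absorbing constants into $D$. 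The unit normalizations in \eqref{cond:cr-simplify} are replaced by the $B$-dependent bounds $4B^2$ and $4B$ noted above. This only changes absolute constants in Theorem~\ref{thm:cr-fast-full}, for instance by rescaling the loss by $4B^2$.

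\textbf{Covering $\Hcal$.} For $\beta,\beta'\in\Theta$ I would use
\[
\|h_\beta-h_{\beta'}\|_\infty=\sup_{z}|(\beta-\beta')^\top z|\le\|\beta-\beta'\|_2,
\]
which holds because $\sup_z\|z\|_2\le1$. An $\epsilon$-net of $\Theta\subseteq B\mathbb B_2^p$ in $\|\cdot\|_2$ therefore induces an $\epsilon$-net of $\Hcal$ in $\|\cdot\|_\infty$. Since a subset of a ball can be covered at radius $2\epsilon$ by centers inside it, Lemma~\ref{lem:norm-ball-covering} gives
\[
\log N_\infty(\epsilon,\Hcal)\le p\log(1+4B/\epsilon)\lesssim p\log(1/\epsilon)
\]
for $\epsilon\in(0,1]$. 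Together with $\log N_1(\epsilon,\Wcal)\lesssim\log(n/\epsilon)$, the sum of the two log-covering numbers is $\lesssim p\log(n/\epsilon)$. This is \eqref{cond:cr-rate-complexities} with $D=p$ (up to constants) and $\xi=0$, since $\|w\|_2^{-0}=1$.

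\textbf{Main obstacle.} There is little genuine difficulty; the delicate part is bookkeeping. First, the factor $B$ inside the logarithm must be absorbed, using $\log(1+4B/\epsilon)\le\log(1+4B)+\log(1/\epsilon)+\log 2$ and the fact that $\log(n/\epsilon)\ge\log n\ge\log 2$ for $n\ge2$. Second, the constant rescaling of the loss must be checked not to interfere with the Bernstein condition \eqref{cond:cr-bernstein} or with the Lipschitz and boundedness constants entering Theorem~\ref{thm:cr-fast-full}. Both are routine constant adjustments, after which Proposition~\ref{prop:cr-rate-simplify} concludes.
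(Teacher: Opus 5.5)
Your proposal is correct and follows the paper's proof essentially verbatim. Both use $\|h_\beta-h_{\beta'}\|_\infty\le\|\beta-\beta'\|_2$, cover the Euclidean ball via Lemma~\ref{lem:norm-ball-covering} to verify \eqref{cond:cr-rate-complexities} with $D\asymp p$ and $\xi=0$, and then invoke Proposition~\ref{prop:cr-rate-simplify}; your internal-cover step (radius $2\epsilon$) and the absorption of $B$ into $\log(n/\epsilon)$ are bookkeeping the paper leaves implicit.
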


\begin{proof}
{ For \(h_\beta,h_{\beta'}\in\Hcal\), \(\|h_\beta-h_{\beta'}\|_\infty\le\|\beta-\beta'\|_2\), so every \(\epsilon\)-cover of \(B\mathbb B_2^p\) in \(\|\cdot\|_2\) induces an \(\epsilon\)-cover of \(\Hcal\) in \(\|\cdot\|_\infty\). Lemma~\ref{lem:norm-ball-covering} with the Euclidean norm and \(r=B\) therefore gives
\[
 \log N_\infty(\epsilon,\Hcal)
 \le p\log(1+2B/\epsilon).
\]
Thus \eqref{cond:cr-rate-complexities} holds with \(D\asymp p\) and
\(\xi=0\), and Proposition~\ref{prop:cr-rate-simplify} proves the claim.}
\end{proof}

\subsection{Excess-Risk Bounds}\label{app:linear-excess-risk}
Throughout this subsection, the sample consists of the $n=t-1$ past observations, the linear model is correctly specified, so that $\bar h_t=h_t^*$, and the conditions of Proposition~\ref{prop:linear-estimation} hold.

\begin{lemma}\label{lem:linear-excess-decomposition}
With probability at least $1-2\delta$, it holds for all $w\in\Wcal$,
\[
 R_t(\hat h_w)-R_t(h_t^*)
 \lesssim
 p\|w\|_2^2\log^2n\,\log^2(1/\delta)+R_t(\bar h_w)-R_t(\bar h_t).
\]
\end{lemma}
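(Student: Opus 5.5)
We start from the in-class decomposition \eqref{eq:decomposition_class_oracle_2}. Correct specification gives $\bar h_t=h_t^*$, so the approximation term vanishes and
\[
 R_t(\hat h_w)-R_t(h_t^*)=\bigl[R_t(\hat h_w)-R_t(\bar h_w)\bigr]+\bigl[R_t(\bar h_w)-R_t(\bar h_t)\bigr].
\]
The second bracket already appears in the claim. The work is therefore to bound the estimation term $R_t(\hat h_w)-R_t(\bar h_w)$, which is measured under $P_t$, by the weighted excess risk that Proposition~\ref{prop:linear-estimation} controls under $P_w$.

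To do this, write $\hat\beta_w$ for the coefficient of $\hat h_w$ and apply Proposition~\ref{prop:squareloss-bayes} under $P_t$, using $\bar h_t=h_t^*$. This gives $R_t(h)-R_t(\bar h_t)=\|h-\bar h_t\|_{L_2(P_t)}^2$ for every $h$. Hence
\[
 R_t(\hat h_w)-R_t(\bar h_w)=\|\hat h_w-\bar h_t\|_{L_2(P_t)}^2-\|\bar h_w-\bar h_t\|_{L_2(P_t)}^2 .
\]
The triangle inequality and $(a+b)^2\le 2a^2+2b^2$ bound the right-hand side by
\[
 2\|\hat h_w-\bar h_w\|_{L_2(P_t)}^2+\|\bar h_w-\bar h_t\|_{L_2(P_t)}^2 .
\]
The last summand is again exactly the oracle drift $R_t(\bar h_w)-R_t(\bar h_t)$.

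It remains to bound $\|\hat h_w-\bar h_w\|_{L_2(P_t)}^2$. Since $P_t\in\Pcal$, this is at most $\|\hat h_w-\bar h_w\|_{L_2(\Pcal)}^2$. By the left inequality of Theorem~\ref{thm:cr-fast-full}, that quantity is at most $C_\Pcal^2\bigl[R^w(\hat h_w)-R^w(\bar h_w)\bigr]$. That inequality comes from \eqref{cond:cr-loss-upper} and \eqref{cond:cr-bernstein} and holds deterministically for every $h$. Under (S) we have $C_\Pcal\lesssim1$, and Proposition~\ref{prop:linear-estimation} gives, on a single event of probability at least $1-2\delta$ and simultaneously for all $w\in\Wcal$,
\[
 R^w(\hat h_w)-R^w(\bar h_w)\lesssim p\|w\|_2^2\log^2n\,\log^2(1/\delta).
\]
Collecting terms yields the claim, with constant $3$ in front of the drift term absorbed into $\lesssim$.

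There is no real obstacle in this argument. The one point to check is that the transfer from $P_w$ to $P_t$ is legitimate: it needs $P_t\in\Pcal$, which holds by definition of $\Pcal$, and the norm-comparison step, which is assumed through the conditions of Theorem~\ref{thm:cr-fast-full}. Uniformity over $w$ is inherited directly from Proposition~\ref{prop:linear-estimation}.
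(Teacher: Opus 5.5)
Your proof is correct and is essentially the paper's argument: with $\bar h_t=h_t^*$, Proposition~\ref{prop:squareloss-bayes} and $(a+b)^2\le 2a^2+2b^2$ reduce everything to $\|\hat h_w-\bar h_w\|_{L_2(P_t)}^2$. This is transferred via $P_t\in\Pcal$ and the norm comparison in Theorem~\ref{thm:cr-fast-full} to $C_\Pcal^2\{R^w(\hat h_w)-R^w(\bar h_w)\}$ and then bounded by Proposition~\ref{prop:linear-estimation}.

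The only differences are cosmetic. The paper applies the split directly to $R_t(\hat h_w)-R_t(h_t^*)=\|\hat h_w-\bar h_t\|_{L_2(P_t)}^2$ instead of going through the decomposition. Also, in your route the drift term carries the constant $2$ rather than $3$, which is immaterial under $\lesssim$.
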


\begin{proof}
Since $\bar h_t=h_t^*$, Proposition~\ref{prop:squareloss-bayes} gives
\begin{align*}
 R_t(\hat h_w)-R_t(h_t^*)
 =\|\hat h_w-\bar h_t\|_{L_2(P_t)}^2 \le
 2\|\hat h_w-\bar h_w\|_{L_2(P_t)}^2
 +2\|\bar h_w-\bar h_t\|_{L_2(P_t)}^2,
\end{align*}
and $\|\bar h_w-\bar h_t\|_{L_2(P_t)}^2=R_t(\bar h_w)-R_t(\bar h_t)$. Since $P_t\in\Pcal$, Theorem~\ref{thm:cr-fast-full} and Proposition~\ref{prop:linear-estimation} give
\[
 \|\hat h_w-\bar h_w\|_{L_2(P_t)}^2
 \le\|\hat h_w-\bar h_w\|_{L_2(\Pcal)}^2
 \lesssim C_\Pcal^2\{R^w(\hat h_w)-R^w(\bar h_w)\}
 \lesssim p\|w\|_2^2\log^2n\,\log^2(1/\delta).
\]
\end{proof}

\begin{corollary}\label{cor:excess-noise}
In the setting of Section~\ref{sec:noise-drift}, let $w_i=n^{-1}$ for all $i<t$. Then, with probability at least $1-2\delta$,
\[
 R_t(\hat h_w)-R_t(h_t^*)
 \lesssim
 \frac{p\log^2n\,\log^2(1/\delta)}{n}.
\]
Bounding the oracle drift by \eqref{eq:decomposition_discrepancy} instead gives
\[
 R_t(\hat h_w)-R_t(h_t^*)
 \lesssim
 \frac{p\log^2n\,\log^2(1/\delta)}{n}
 +\Big|\frac{1}{n}\sum_{i<t}\sigma_i^2-\sigma_t^2\Big|.
\]
\end{corollary}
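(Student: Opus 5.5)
The plan is to combine Lemma~\ref{lem:linear-excess-decomposition} with the structure of the noise-drift setting of Section~\ref{sec:noise-drift}. With uniform weights $w_i=n^{-1}$ we have $\|w\|_2^2=1/n$, so the estimation part of Lemma~\ref{lem:linear-excess-decomposition} is already of order $p\log^2 n\,\log^2(1/\delta)/n$ on the event of probability at least $1-2\delta$. It then remains to handle the oracle drift $R_t(\bar h_w)-R_t(\bar h_t)$ in two ways.

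For the first bound, I would use the moment identities from Section~\ref{sec:noise-drift}. Since the feature law is stationary and $\beta_i^*=\beta^*$, we get $G_i=G_t$ and $b_i=G_t\beta^*=b_t$ for all $i<t$, so $G_w=G_t$ and $b_w=b_t$. Hence $R^w$ and $R_t$ differ only by the constant $a_w-a_t$ and have the same minimizers over $\Theta$. If there are multiple minimizers, $\bar h_w$ is still a minimizer of $R_t$, so $R_t(\bar h_w)=R_t(\bar h_t)$. With $G_t\succeq\gamma I_p$ the minimizer $\beta^*$ is unique anyway, since $\|\beta^*\|_2\le B$. Thus the drift vanishes, which gives the first display. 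Alternatively, $v_w=0$ and Proposition~\ref{prop:localnormalcontrol} yields the same conclusion.

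For the second bound, I would replace the exact computation by \eqref{eq:decomposition_discrepancy}, that is, Proposition~\ref{prop:discrepancy}: the drift is at most $2\,\mathrm{dis}(P_w,P_t)$. Then I would invoke Proposition~\ref{prop:noise-drift}, which gives $\mathrm{dis}(P_w,P_t)=|\sum_i w_i\sigma_i^2-\sigma_t^2|$. For completeness, this follows from $R_i(\beta)=\sigma_i^2+(\beta-\beta^*)^\top G_t(\beta-\beta^*)$, so that $R^w-R_t$ is the constant $\sum_i w_i\sigma_i^2-\sigma_t^2$. Substituting $w_i=1/n$ and absorbing the factor $2$ into $\lesssim$ gives the claim.

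No step is a real obstacle. The only point needing care is checking that Lemma~\ref{lem:linear-excess-decomposition} applies, namely that the conditions of Proposition~\ref{prop:linear-estimation} hold for the singleton $\Wcal=\{(1/n,\dots,1/n)\}$. Here $C_\Wcal=n^{-1/2}$, $B_\Wcal=1$, and the covering number of a singleton is trivial, so these conditions are satisfied.
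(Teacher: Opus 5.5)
Your proposal is correct and follows essentially the same route as the paper. Uniform weights give $\|w\|_2^2=n^{-1}$ in Lemma~\ref{lem:linear-excess-decomposition}; the oracle drift vanishes because $G_w=G_t$ and $b_w=b_t$ make $R^w-R_t$ the constant $a_w-a_t$; and \eqref{eq:decomposition_discrepancy} bounds the drift by twice $\bigl|\frac1n\sum_{i<t}\sigma_i^2-\sigma_t^2\bigr|$. Your check of the singleton weight class ($C_\Wcal=n^{-1/2}$, $B_\Wcal=1$, trivial covering) covers only the weight-class conditions of Proposition~\ref{prop:linear-estimation}, not all of them; this is harmless, since the remaining conditions are standing assumptions of that subsection.
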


\begin{proof}
The conditional mean does not change, so $\bar h_w=\bar h_t=h_t^*$ and the oracle drift is zero. Since $\|w\|_2^2=n^{-1}$, Lemma~\ref{lem:linear-excess-decomposition} gives the first bound. For the second bound, $G_i=G_t$ and $b_i=b_t$ for all $i<t$, so
\[
 R^w(\beta)-R_t(\beta)=a_w-a_t=\frac{1}{n}\sum_{i<t}\sigma_i^2-\sigma_t^2
 \qquad\text{for all }\beta\in\Theta,
\]
and \eqref{eq:decomposition_discrepancy} bounds the oracle drift by twice the absolute value of this constant.
\end{proof}

\begin{corollary}\label{cor:excess-feature}
In the setting of Section~\ref{sec:feature-moment-drift}, assume $G_i\succ0$ for all $i<t$ and let $w_i=n^{-1}$ for all $i<t$. Then, with probability at least $1-2\delta$, $R_t(\hat h_w)-R_t(h_t^*)\lesssim p\log^2n\,\log^2(1/\delta)/n$. With the discrepancies, we get
\[
 R_t(\hat h_w)-R_t(h_t^*)
 \lesssim
 \frac{p\log^2n\,\log^2(1/\delta)}{n}
 +(B+\|\beta^*\|_2)^2\|G_w-G_t\|_{\mathrm{op}},
\]
and, if $\varepsilon\le\lambda_{\min}(G_w)(B-\|\beta^*\|_2)^2$, bounding it by \eqref{eq:decomposition_localized} gives
\[
 R_t(\hat h_w)-R_t(h_t^*)
 \lesssim
 \frac{p\log^2n\,\log^2(1/\delta)}{n}
 +\varepsilon\|G_w^{-1/2}(G_w-G_t)G_w^{-1/2}\|_{\mathrm{op}}.
\]
\end{corollary}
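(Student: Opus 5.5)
The three claims of Corollary~\ref{cor:excess-feature} all come from Lemma~\ref{lem:linear-excess-decomposition} with uniform weights. With $w_i=n^{-1}$ we have $\|w\|_2^2=n^{-1}$, so the estimation term is $p\log^2n\,\log^2(1/\delta)/n$ on the same event of probability at least $1-2\delta$. The remaining work is bounding the oracle drift $R_t(\bar h_w)-R_t(\bar h_t)$ in three ways.

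\textbf{Oracle-drift bound.} First I check that $G_w\succ0$. This holds because $G_w=n^{-1}\sum_{i<t}G_i$ is an average of positive definite matrices. Also $G_t\succ0$ by the standing assumption $\gamma I_p\preceq G_t$. Proposition~\ref{prop:feature-drift-full} then gives $\beta_w=\beta_t=\beta^*$. Hence the oracle drift is zero, and the first bound follows directly from the lemma.

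\textbf{Global discrepancy bound.} I use \eqref{eq:decomposition_discrepancy}, i.e.\ Proposition~\ref{prop:discrepancy}, to bound the drift by $2\,\mathrm{dis}(P_w,P_t)$. The upper bound in Proposition~\ref{prop:feature-drift-full} then gives $2\,\mathrm{dis}(P_w,P_t)\le2(B+\|\beta^*\|_2)^2\|G_w-G_t\|_{\mathrm{op}}$. Inserting this into Lemma~\ref{lem:linear-excess-decomposition} gives the second display.

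\textbf{Localized discrepancy bound.} I use \eqref{eq:decomposition_localized}, whose hypothesis is $\bar h_t\in\Hcal_{w,\varepsilon}$. Since $\beta_t=\beta_w$, we have $R^w(\bar h_t)-R^w(\bar h_w)=0\le\varepsilon$, so the hypothesis holds for every $\varepsilon\ge0$. This is the one point needing care, and it is immediate here. The drift is then at most $2\,\mathrm{dis}_\varepsilon^{\mathrm{loc}}(P_w,P_t)$. Under $\varepsilon\le\lambda_{\min}(G_w)(B-\|\beta^*\|_2)^2$, Proposition~\ref{prop:feature-drift-full} evaluates this as $2\varepsilon\|G_w^{-1/2}(G_w-G_t)G_w^{-1/2}\|_{\mathrm{op}}$. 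Inserting this into Lemma~\ref{lem:linear-excess-decomposition} gives the third display.

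No step is a genuine obstacle. The points to state explicitly are the positive definiteness of $G_w$ and $G_t$ and the localization membership $\bar h_t\in\Hcal_{w,\varepsilon}$. The factor $2$ from the decompositions is absorbed into $\lesssim$.
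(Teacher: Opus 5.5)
Your proof is correct and follows the paper's argument essentially line by line. Proposition~\ref{prop:feature-drift-full} gives $\beta_w=\beta_t=\beta^*$, so the drift vanishes; the two discrepancy displays then follow from \eqref{eq:decomposition_discrepancy} and \eqref{eq:decomposition_localized}, using the upper bound and the exact localized value in that proposition together with $\bar h_t=\bar h_w\in\Hcal_{w,\varepsilon}$ for every $\varepsilon\ge0$. Your explicit checks that $G_w\succ0$ (as an average of the $G_i\succ0$) and $G_t\succ0$ make precise hypotheses of Proposition~\ref{prop:feature-drift-full} that the paper leaves implicit.
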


\begin{proof}
By Proposition~\ref{prop:feature-drift-full}, $\beta_w=\beta_t=\beta^*$, so the oracle drift is zero and Lemma~\ref{lem:linear-excess-decomposition} gives the first bound. The second bound follows from \eqref{eq:decomposition_discrepancy} and the upper bound on the discrepancy in Proposition~\ref{prop:feature-drift-full}. Since $\bar h_t=\bar h_w\in\Hcal_{w,\varepsilon}$, \eqref{eq:decomposition_localized} applies for every $\varepsilon\ge0$, and the third bound follows from the localized discrepancy in Proposition~\ref{prop:feature-drift-full}.
\end{proof}

\begin{corollary}\label{cor:excess-smooth}
With probability at least $1-2\delta$, the following holds for all $w\in\Wcal$.
\begin{enumerate}[label=(\roman*)]
\item If $\|G_i-G_t\|_{\mathrm{op}}\le L_G\psi_i(t)$ and $\|\beta_i^*-\beta_t^*\|_2\le L_\beta\psi_i(t)$ for all $i<t$,
\[
 R_t(\hat h_w)-R_t(h_t^*)
 \lesssim
 p\|w\|_2^2\log^2n\,\log^2(1/\delta)
 +\frac{\Gamma(2BL_G+\Gamma L_\beta)^2}{\gamma^2}
 M_w(\psi;t)^2.
\]
\item Under the conditions of Proposition~\ref{prop:smooth-discrepancy},
\[
 R_t(\hat h_w)-R_t(h_t^*)
 \lesssim
 p\|w\|_2^2\log^2n\,\log^2(1/\delta)
 +(L_a+B^2L_G+2BL_b)M_w(\psi;t).
\]
\item Under the conditions of (i) and (ii), assume further that $\gamma I_p\preceq G_w$, that $\beta_w$ is interior, and that $\varepsilon\in(0,1]$ satisfies $\bar h_t\in\Hcal_{w,\varepsilon}$. Then
\[
 R_t(\hat h_w)-R_t(h_t^*)
 \lesssim
 p\|w\|_2^2\log^2n\,\log^2(1/\delta)
 +\Big(L_a+2BL_b+B^2L_G+\sqrt{\frac{\varepsilon}{\gamma}}\Big(\frac{L_G}{\sqrt\gamma}+2BL_G+\Gamma L_\beta\Big)\Big)M_w(\psi;t).
\]
\end{enumerate}
\end{corollary}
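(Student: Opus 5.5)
The plan is to treat all three parts uniformly: start from Lemma~\ref{lem:linear-excess-decomposition}, which holds on a single event of probability at least $1-2\delta$ simultaneously for all $w\in\Wcal$. It bounds the excess risk by the estimation term $p\|w\|_2^2\log^2n\,\log^2(1/\delta)$ plus the oracle drift $R_t(\bar h_w)-R_t(\bar h_t)$. The drift is deterministic, so on this event it only remains to bound it by three different routes, one for each of (i)--(iii). No further probabilistic argument is needed.

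For (i), I would invoke Remark~\ref{rem:smooth-local-sufficient}. It shows the local condition of Proposition~\ref{prop:smooth-local} holds with $L_v=2BL_G+\Gamma L_\beta$. Proposition~\ref{prop:smooth-local} then gives
\[
\|\bar h_w-\bar h_t\|_{L_2(P_t)}^2\le \frac{\Gamma}{\gamma^2}L_v^2M_w(\psi;t)^2 .
\]
By correct specification and Proposition~\ref{prop:squareloss-bayes}, this equals the oracle drift.

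For (ii), I would directly use Proposition~\ref{prop:smooth-discrepancy}, which bounds the drift by $2(L_a+B^2L_G+2BL_b)M_w(\psi;t)$, and absorb the factor $2$ into $\lesssim$.

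For (iii), the plan uses \eqref{eq:decomposition_localized}, which applies because $\bar h_t\in\Hcal_{w,\varepsilon}$ by assumption. It bounds the drift by $2\,\mathrm{dis}_\varepsilon^{\mathrm{loc}}$, and Proposition~\ref{prop:localized-full}(i), valid since $\beta_w$ is interior and $G_w\succeq\gamma I_p\succ0$, gives
\[
\mathrm{dis}_\varepsilon^{\mathrm{loc}}\le|\ell_w|+2\sqrt\varepsilon\|G_w^{-1/2}v_w\|_2+\varepsilon\|\tilde\Lambda\|_{\mathrm{op}} .
\]
Each term is then controlled by $M_w(\psi;t)$:
\begin{itemize}
\item The level term satisfies $|\ell_w|=|R^w(\beta_w)-R_t(\beta_w)|\le\mathrm{dis}(P_w,P_t)\le(L_a+2BL_b+B^2L_G)M_w$, exactly as in the proof of Proposition~\ref{prop:smooth-discrepancy}.
\item The tilt term satisfies $\|G_w^{-1/2}v_w\|_2\le\gamma^{-1/2}\|v_w\|_2\le\gamma^{-1/2}(2BL_G+\Gamma L_\beta)M_w$, using the triangle-inequality argument in the proof of Proposition~\ref{prop:smooth-local} with Remark~\ref{rem:smooth-local-sufficient}.
\item The curvature term satisfies $\|\tilde\Lambda\|_{\mathrm{op}}\le\gamma^{-1}\|G_w-G_t\|_{\mathrm{op}}\le\gamma^{-1}L_GM_w$, again by the triangle inequality over $i<t$.
\end{itemize}

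The main obstacle is matching the stated form in (iii). There $\varepsilon$ appears as $\sqrt{\varepsilon/\gamma}$ multiplying $L_G/\sqrt\gamma$, whereas my curvature term is naturally $\varepsilon L_G/\gamma$. I would use $\varepsilon\le1$, hence $\varepsilon\le\sqrt\varepsilon$, to write $\varepsilon L_G/\gamma\le\sqrt{\varepsilon/\gamma}\,L_G/\sqrt\gamma$. This is where the restriction $\varepsilon\in(0,1]$ enters. The factor $2$ from the tilt term is absorbed into $\lesssim$.

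Summing the three pieces and the estimation term then yields the bound in (iii).
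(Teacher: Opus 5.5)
Your proposal is correct and follows the paper's proof essentially step by step. It uses the same event from Lemma~\ref{lem:linear-excess-decomposition}, Proposition~\ref{prop:smooth-local} with Remark~\ref{rem:smooth-local-sufficient} for (i), Proposition~\ref{prop:smooth-discrepancy} for (ii), and for (iii) the localized bound \eqref{eq:decomposition_localized} with Proposition~\ref{prop:localized-full}(i), the same three term-wise bounds, and $\varepsilon\le\sqrt\varepsilon$. The only cosmetic difference is that you bound $|\ell_w|$ through the uniform bound on $|R^w(\beta)-R_t(\beta)|$ from the proof of Proposition~\ref{prop:smooth-discrepancy}, while the paper expands $\ell_w=\sum_{i<t}w_i\{R_i(\beta_w)-R_t(\beta_w)\}$ term by term; both give the same constant.
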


\begin{proof}
Parts (i) and (ii) follow from Lemma~\ref{lem:linear-excess-decomposition} together with Proposition~\ref{prop:smooth-local} and Remark~\ref{rem:smooth-local-sufficient} for (i), and Proposition~\ref{prop:smooth-discrepancy} for (ii).

For (iii), $\bar h_t\in\Hcal_{w,\varepsilon}$, so \eqref{eq:decomposition_localized} and Proposition~\ref{prop:localized-full}(i) give
\[
 R_t(\bar h_w)-R_t(\bar h_t)
 \le
 2|\ell_w|+4\sqrt{\varepsilon}\|G_w^{-1/2}v_w\|_2+2\varepsilon\|G_w^{-1/2}(G_w-G_t)G_w^{-1/2}\|_{\mathrm{op}}.
\]
We bound the three terms separately. Since $\ell_w=\sum_{i<t}w_i\{R_i(\beta_w)-R_t(\beta_w)\}$ and $\|\beta_w\|_2\le B$,
\[
 |\ell_w|
 \le\sum_{i<t}w_i\big(|a_i-a_t|+2B\|b_i-b_t\|_2+B^2\|G_i-G_t\|_{\mathrm{op}}\big)
 \le(L_a+2BL_b+B^2L_G)M_w(\psi;t).
\]
By Remark~\ref{rem:smooth-local-sufficient}, $\|(G_i-G_t)\beta_w-(b_i-b_t)\|_2\le(2BL_G+\Gamma L_\beta)\psi_i(t)$ for all $i<t$, hence
\[
 \|G_w^{-1/2}v_w\|_2
 \le\gamma^{-1/2}\|v_w\|_2
 \le\gamma^{-1/2}(2BL_G+\Gamma L_\beta)M_w(\psi;t).
\]
Finally,
\[
 \|G_w^{-1/2}(G_w-G_t)G_w^{-1/2}\|_{\mathrm{op}}
 \le\gamma^{-1}\|G_w-G_t\|_{\mathrm{op}}
 \le\gamma^{-1}L_GM_w(\psi;t).
\]
Combining these bounds with $\varepsilon\le\sqrt\varepsilon$ and Lemma~\ref{lem:linear-excess-decomposition} proves (iii).
\end{proof}

\begin{example}\label{exp:weights}
For prediction at $t$, common weight families \citep{GARDNER2006637,hyndman2008forecasting} use only observations $i<t$:
\begin{enumerate}[label=(\roman*)]
\item Uniform windows: $w_i^{(t)}=s^{-1}\mathbbm1\{t-s\le i<t\}$, $1\le s\le t-1$.
\item Exponential smoothing: $w_i^{(t)}=\exp[-\alpha(t-i)]/\sum_{j=1}^{t-1}\exp[-\alpha(t-j)]$ for $1\le i<t$, and zero otherwise, with $\alpha>0$.
\end{enumerate}
\end{example}

\paragraph{Exponential Weights}
For exponential weights (Example~\ref{exp:weights}(ii)), write $s_\alpha=\min\{(1-e^{-\alpha})^{-1},\,n\}$. Then $\|w\|_2^2\asymp s_\alpha^{-1}$ and $M_w(\psi;t)\lesssim s_\alpha^\nu$, so that the estimation term $p\|w\|_2^2\log^2n\,\log^2(1/\delta)$ is of order $p/s_\alpha$ up to logarithmic factors. Suppose that
\[
 \frac{\sqrt\Gamma}{\gamma}(2BL_G+\Gamma L_\beta)\lesssim\kappa,
 \qquad
 L_a+B^2L_G+2BL_b\lesssim\kappa.
\]
Then Corollary~\ref{cor:excess-smooth}(i) and (ii) give
\[
 R_t(\hat h_{w(\alpha)})-R_t(h_t^*)\lesssim\frac{p}{s_\alpha}+\kappa^2s_\alpha^{2\nu}
 \qquad\text{and}\qquad
 R_t(\hat h_{w(\alpha)})-R_t(h_t^*)\lesssim\frac{p}{s_\alpha}+\kappa s_\alpha^{\nu},
\]
respectively. Their minimizing horizons, truncated at the available history, are
\[
 s_\alpha^*\asymp(p/\kappa^2)^{1/(2\nu+1)}
 \qquad\text{and}\qquad
 \tilde s_\alpha^*\asymp(p/\kappa)^{1/(\nu+1)},
\]
with rates $p^{2\nu/(2\nu+1)}\kappa^{2/(2\nu+1)}$ and $p^{\nu/(\nu+1)}\kappa^{1/(\nu+1)}$. If also $\gamma^{-1/2}(\gamma^{-1/2}L_G+2BL_G+\Gamma L_\beta)\lesssim\kappa$, Corollary~\ref{cor:excess-smooth}(iii) gives the same first-order bound $p/s_\alpha+\kappa s_\alpha^\nu$ for every admissible $\varepsilon\in(0,1]$ satisfying $\bar h_t\in\Hcal_{w,\varepsilon}$, since the term $|\ell_w|$ does not shrink with $\varepsilon$. For interior oracles, $\beta_t-\beta_w=G_t^{-1}v_w$, so the condition $\bar h_t\in\Hcal_{w,\varepsilon}$ holds whenever
\[
 \varepsilon\ge\|G_w\|_{\mathrm{op}}\gamma^{-2}(2BL_G+\Gamma L_\beta)^2M_w(\psi;t)^2.
\]

\section{Nonparametric Regression}\label{app:nonparametric}

We use the notation of Section~\ref{sec:nonparametric-extension} and set
\(a_i=\E_{P_i}[Y^2]\), with \(a_w\), \(G_{w,q}\), and \(b_{w,q}\) denoting
the corresponding weighted moments.
For the generic coefficient class, write
\(B_q=\sup_{\beta\in\Theta_q}\|\beta\|_2\).

\subsection{Drift for Fixed $q$}

\begin{proposition}\label{thm:basis-drift-optimality}
Assume that \(\beta_{w,q}\) and \(\beta_{t,q}\) are interior points of
\(\Theta_q\) and
\[
 \gamma_q I_q\preceq G_{t,q}\preceq\Gamma_q I_q.
\]
Then
\[
 \frac{1}{\Gamma_q}\|v_{w,q}\|_2^2
 \le \|\bar h_{w,q}-\bar h_{t,q}\|_{L_2(P_t)}^2
 \le\frac{1}{\gamma_q}\|v_{w,q}\|_2^2.
\]
\end{proposition}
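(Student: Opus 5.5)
The plan is to repeat the argument of Proposition~\ref{thm:driftoptimality} with $Z$ replaced by the feature vector $\Phi_q(Z)$. Nothing in that proof used the specific form of $\Hcal$ beyond linearity in the coefficients.

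\emph{Step 1: gradients and normal equations.} For $h_\beta=\beta^\top\Phi_q$ and square loss, expand
\[
R_i(\beta)=a_i-2\beta^\top b_{i,q}+\beta^\top G_{i,q}\beta,\qquad R^w(\beta)=a_w-2\beta^\top b_{w,q}+\beta^\top G_{w,q}\beta.
\]
This gives $\nabla R_t(\beta)=2(G_{t,q}\beta-b_{t,q})$ and $\nabla R^w(\beta)=2(G_{w,q}\beta-b_{w,q})$. Since $\beta_{w,q}$ and $\beta_{t,q}$ are interior minimizers over $\Theta_q$, both gradients vanish there. Hence $G_{w,q}\beta_{w,q}=b_{w,q}$ and $G_{t,q}\beta_{t,q}=b_{t,q}$.

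\emph{Step 2: rewrite the residual.} Substitute the normal equations into the definition of $v_{w,q}$:
\[
v_{w,q}=(G_{w,q}-G_{t,q})\beta_{w,q}-(G_{w,q}\beta_{w,q}-G_{t,q}\beta_{t,q})=G_{t,q}(\beta_{t,q}-\beta_{w,q}).
\]
Because $G_{t,q}\succeq\gamma_q I_q\succ0$, it is invertible, so $\beta_{w,q}-\beta_{t,q}=-G_{t,q}^{-1}v_{w,q}$.

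\emph{Step 3: express the $L_2(P_t)$ distance.} The computation of Lemma~\ref{lem:squarelosslinear}, with $Z$ replaced by $\Phi_q(Z)$, gives
\[
\|\bar h_{w,q}-\bar h_{t,q}\|_{L_2(P_t)}^2=(\beta_{w,q}-\beta_{t,q})^\top G_{t,q}(\beta_{w,q}-\beta_{t,q})=v_{w,q}^\top G_{t,q}^{-1}v_{w,q}.
\]
The spectral bounds $\Gamma_q^{-1}I_q\preceq G_{t,q}^{-1}\preceq\gamma_q^{-1}I_q$ then yield the two-sided inequality in the statement.

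The only point needing care is Step 1, specifically that the interior assumption justifies the first-order conditions. An interior minimizer of a differentiable function has zero gradient, so this goes through. The convexity of $\Theta_q$ and the boundedness of $\Phi_q$ are needed only to make the moments finite and the problem well posed, and they are not otherwise used. I do not expect a genuine obstacle; the argument is a verbatim transfer of the linear case.
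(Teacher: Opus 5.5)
Your proposal is correct and matches the paper's proof: both use the interior normal equations $G_{w,q}\beta_{w,q}=b_{w,q}$ and $G_{t,q}\beta_{t,q}=b_{t,q}$ to rewrite $v_{w,q}=G_{t,q}(\beta_{t,q}-\beta_{w,q})$. Both then express the squared $L_2(P_t)$ distance as $v_{w,q}^\top G_{t,q}^{-1}v_{w,q}$ and conclude from the eigenvalue bounds on $G_{t,q}$.
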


\begin{proof}
The interior normal equations are
\(G_{w,q}\beta_{w,q}=b_{w,q}\) and
\(G_{t,q}\beta_{t,q}=b_{t,q}\). Therefore,
\[
 v_{w,q}=G_{t,q}(\beta_{t,q}-\beta_{w,q}).
\]
For square loss,
\[
 \|\bar h_{w,q}-\bar h_{t,q}\|_{L_2(P_t)}^2
 =(\beta_{w,q}-\beta_{t,q})^\top
 G_{t,q}(\beta_{w,q}-\beta_{t,q}) = v_{w,q}^\top G_{t,q}^{-1}v_{w,q}.
\]
The eigenvalue bounds complete the proof.
\end{proof}

\begin{proposition}\label{prop:basis-discrepancy-upper}
Write $\operatorname{dis}_{\Hcal_q}(P_w,P_t)$ for the discrepancy restricted to $\Hcal_q$. If \(B_q<\infty\), then
\[
 \operatorname{dis}_{\Hcal_q}(P_w,P_t)
 \le |a_w-a_t|+2B_q\|v_{w,q}\|_2
 +3B_q^2\|G_{w,q}-G_{t,q}\|_{\mathrm{op}}.
\]
\end{proposition}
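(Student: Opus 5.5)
The plan is to repeat the argument of Proposition~\ref{prop:discrepancy_upper_local_moments} with the features $Z$ replaced by $\Phi_q(Z)$ and the ball radius $B$ replaced by $B_q$. For $h_\beta\in\Hcal_q$ and square loss, expanding $(Y-\beta^\top\Phi_q(Z))^2$ under each $P_i$ and averaging with the weights gives
\[
 R^w(\beta)-R_t(\beta)=(a_w-a_t)-2\beta^\top(b_{w,q}-b_{t,q})+\beta^\top(G_{w,q}-G_{t,q})\beta .
\]
This identity uses only linearity of expectation, so it needs no interiority, no normal equations, and no conditioning of $G_{t,q}$.

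Next, I substitute the definition of the local perturbation, $b_{w,q}-b_{t,q}=(G_{w,q}-G_{t,q})\beta_{w,q}-v_{w,q}$. This rewrites the difference as
\[
 (a_w-a_t)+2\beta^\top v_{w,q}-2\beta^\top(G_{w,q}-G_{t,q})\beta_{w,q}+\beta^\top(G_{w,q}-G_{t,q})\beta .
\]
I then bound each term for $\beta\in\Theta_q$:
\begin{itemize}
\item the linear term satisfies $|2\beta^\top v_{w,q}|\le 2B_q\|v_{w,q}\|_2$ by Cauchy--Schwarz;
\item the cross term satisfies $|2\beta^\top(G_{w,q}-G_{t,q})\beta_{w,q}|\le 2B_q^2\|G_{w,q}-G_{t,q}\|_{\mathrm{op}}$, since $\beta_{w,q}\in\Theta_q$ gives $\|\beta_{w,q}\|_2\le B_q$;
\item the quadratic term is at most $B_q^2\|G_{w,q}-G_{t,q}\|_{\mathrm{op}}$.
\end{itemize}
Summing these bounds and taking the supremum over $\Theta_q$ gives the claim.

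I do not expect a real obstacle. The only point to check is that the moments are finite, so that the expansion is legitimate. This holds because $|Y|\le B$ and the basis functions are bounded (histogram indicators in the case of interest). Finiteness of $B_q$ keeps all the bounds finite.
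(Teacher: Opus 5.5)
Your proposal is correct and follows the paper's proof exactly. Like the paper, you expand the risk difference in the moments of $\Phi_q(Z)$, substitute $b_{w,q}-b_{t,q}=(G_{w,q}-G_{t,q})\beta_{w,q}-v_{w,q}$, and bound the linear, cross, and quadratic terms by Cauchy--Schwarz using $\|\beta\|_2,\|\beta_{w,q}\|_2\le B_q$; as you note, this uses only $\beta_{w,q}\in\Theta_q$ and needs no interiority or conditioning of $G_{t,q}$.
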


\begin{proof}
For \(\beta\in\Theta_q\),
\begin{align*}
 R^w(\beta)-R_t(\beta)
 =a_w-a_t-2\beta^\top(b_{w,q}-b_{t,q}) +\beta^\top(G_{w,q}-G_{t,q})\beta.
\end{align*}
Substituting
\(b_{w,q}-b_{t,q}=(G_{w,q}-G_{t,q})\beta_{w,q}-v_{w,q}\), applying
Cauchy--Schwarz, and using
\(\|\beta\|_2,\|\beta_{w,q}\|_2\le B_q\) gives the stated bound uniformly over
\(\beta\in\Theta_q\).
\end{proof}

\begin{proposition}\label{thm:basis-discrepancy}
Suppose \(v_{w,q}\ne0\) and, for some \(\rho_{w,q}>0\),
\[
 \beta_{w,q}\pm\rho_{w,q}
 \frac{v_{w,q}}{\|v_{w,q}\|_2}\in\Theta_q.
\]
If \(0\in\Theta_q\) and, for some \(r_q>0\), \(r_q\mathbb B_2^q\subseteq\Theta_q\), then
\begin{align*}
 \operatorname{dis}_{\Hcal_q}(P_w,P_t)
 \ge\max\Bigg\{&2\rho_{w,q}\|v_{w,q}\|_2,\ |a_w-a_t|, 2r_q\|b_{w,q}-b_{t,q}\|_2,
 \frac{r_q^2}{2}\|G_{w,q}-G_{t,q}\|_{\mathrm{op}}
 \Bigg\}.
\end{align*}
\end{proposition}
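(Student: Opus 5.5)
This is the basis-expansion analogue of the corollary combining Lemmas~\ref{lem:discrepancy_lower_local} and~\ref{lem:discrepancy_lower_moments}. I would rerun those two arguments verbatim after replacing $Z$ by $\Phi_q(Z)$, $p$ by $q$, and $\Theta$ by $\Theta_q$. Set
\[
f(\beta)=R^w(\beta)-R_t(\beta)=a_w-a_t-2\beta^\top(b_{w,q}-b_{t,q})+\beta^\top(G_{w,q}-G_{t,q})\beta ,
\qquad \beta\in\Theta_q .
\]
Then $\operatorname{dis}_{\Hcal_q}(P_w,P_t)=\sup_{\beta\in\Theta_q}|f(\beta)|$. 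Each of the four lower bounds is obtained by evaluating $f$ at one or two feasible points and using $\sup|f|\ge\max\{|f(x)|,|f(y)|\}\ge\tfrac12|f(x)\pm f(y)|$.

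\emph{Local term.} Let $u=v_{w,q}/\|v_{w,q}\|_2$, and note that $\beta_{w,q}\pm\rho_{w,q}u\in\Theta_q$ by assumption. The constant and quadratic parts of $f$ cancel in the difference, giving
\[
f(\beta_{w,q}+\rho_{w,q}u)-f(\beta_{w,q}-\rho_{w,q}u)
=4\rho_{w,q}u^\top\bigl((G_{w,q}-G_{t,q})\beta_{w,q}-(b_{w,q}-b_{t,q})\bigr)
=4\rho_{w,q}\|v_{w,q}\|_2 .
\]
Hence $\operatorname{dis}_{\Hcal_q}\ge 2\rho_{w,q}\|v_{w,q}\|_2$.

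\emph{Moment terms.} Since $0\in\Theta_q$, evaluating $f$ at $0$ gives $|a_w-a_t|$. Using $r_q\mathbb B_2^q\subseteq\Theta_q$, take $u=(b_{w,q}-b_{t,q})/\|b_{w,q}-b_{t,q}\|_2$; the case $b_{w,q}=b_{t,q}$ is trivial. Then $f(r_qu)-f(-r_qu)=-4r_q\|b_{w,q}-b_{t,q}\|_2$, which gives the third bound.

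For the fourth bound, take a unit eigenvector $u$ of the symmetric matrix $G_{w,q}-G_{t,q}$ whose eigenvalue $\lambda$ satisfies $|\lambda|=\|G_{w,q}-G_{t,q}\|_{\mathrm{op}}$. Then
\[
\tfrac12\bigl(f(r_qu)+f(-r_qu)\bigr)=a_w-a_t+r_q^2\lambda ,
\]
so $\operatorname{dis}_{\Hcal_q}\ge\max\{|a_w-a_t|,\,|a_w-a_t+r_q^2\lambda|\}$. By the triangle inequality this is at least $\tfrac{r_q^2}{2}|\lambda|$. Taking the maximum of the four bounds proves the claim.

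The only step needing care is this last one, since the level shift $a_w-a_t$ must be removed using $f(0)$. Otherwise the argument is routine, because convexity of $\Theta_q$ is not even needed: only the listed points are required to be feasible.
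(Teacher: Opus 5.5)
Your proof is correct and is essentially the paper's argument. It uses the same four evaluations of $f$: the pair $\beta_{w,q}\pm\rho_{w,q}u$; the point $0$; the pair $\pm r_q u$ along $b_{w,q}-b_{t,q}$; and the pair $\pm r_q u$ along an extremal eigenvector of $G_{w,q}-G_{t,q}$, with $|f(0)|$ absorbing the level shift. One wording fix in your local-term step: what cancels are the constant and quadratic terms of $f(\beta_{w,q}+\tau)=f(\beta_{w,q})+2\tau^\top v_{w,q}+\tau^\top(G_{w,q}-G_{t,q})\tau$ as a function of $\tau$, whereas the cross term $4\rho_{w,q}u^\top(G_{w,q}-G_{t,q})\beta_{w,q}$ from the quadratic part of $f$ in $\beta$ survives and produces $v_{w,q}$.
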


\begin{proof}
Let \(f_q(\beta)=R^w(\beta)-R_t(\beta)\). With
\(u=v_{w,q}/\|v_{w,q}\|_2\), direct expansion gives
\[
 f_q(\beta_{w,q}+\rho_{w,q}u)
 -f_q(\beta_{w,q}-\rho_{w,q}u)
 =4\rho_{w,q}\|v_{w,q}\|_2,
\]
so the discrepancy is at least \(2\rho_{w,q}\|v_{w,q}\|_2\). Evaluating at
\(\beta=0\) gives \(|a_w-a_t|\). Next, if \(b_{w,q}\ne b_{t,q}\), for
\(u_b=(b_{w,q}-b_{t,q})/\|b_{w,q}-b_{t,q}\|_2\),
\[
 f_q(r_qu_b)-f_q(-r_qu_b)
 =-4r_q\|b_{w,q}-b_{t,q}\|_2,
\]
which gives the third term; otherwise this term is zero. Finally, let \(u_G\) be a unit eigenvector of
\(G_{w,q}-G_{t,q}\) corresponding to an eigenvalue of maximal absolute value.
Since
\[
 \frac{f_q(r_qu_G)+f_q(-r_qu_G)}{2}
 =a_w-a_t+r_q^2u_G^\top(G_{w,q}-G_{t,q})u_G,
\]
combining this evaluation with \(|f_q(0)|\) gives the final term. Taking the
maximum proves the result.
\end{proof}

\subsection{Weights under Smooth Drift}

\begin{proposition}\label{prop:basis-approximation}
For the equal-width \(q\)-bin histogram class with
\(\Theta_q^{\mathrm{hist}}=[-B,B]^q\), if \(h_t^*\) is \(1\)-Lipschitz and
bounded by \(B\), then
\[
 R_t(\bar h_{t,q})-R_t(h_t^*)\le q^{-2}.
\]
\end{proposition}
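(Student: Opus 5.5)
Since $R_t(h)-R_t(h_t^*)=\|h-h_t^*\|_{L_2(P_t)}^2$ for every $h$ by Proposition~\ref{prop:squareloss-bayes}, the in-class oracle $\bar h_{t,q}$ minimizes $\|h-h_t^*\|_{L_2(P_t)}^2$ over $\Hcal_q$. It therefore suffices to exhibit one $\tilde h\in\Hcal_q$ with $\|\tilde h-h_t^*\|_{L_2(P_t)}^2\le q^{-2}$, because
\[
 R_t(\bar h_{t,q})-R_t(h_t^*)\le R_t(\tilde h)-R_t(h_t^*)=\|\tilde h-h_t^*\|_{L_2(P_t)}^2 .
\]
Only the marginal law of $Z_t$ on $[0,1)$ enters here; no density assumption is needed, since a sup-norm bound suffices.

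For the construction, let the bins be $I_j=[(j-1)/q,j/q)$ for $j=1,\dots,q$. Choose $\tilde h=\sum_j c_j\mathbbm 1_{I_j}$ with $c_j=h_t^*(m_j)$, where $m_j$ is the midpoint of $I_j$. Because $\|h_t^*\|_\infty\le B$, we have $|c_j|\le B$, so the coefficient vector lies in $\Theta_q^{\mathrm{hist}}=[-B,B]^q$ and $\tilde h\in\Hcal_q$. This feasibility check is the one place where the box constraint matters, and using a point value of $h_t^*$ (rather than, say, a conditional mean) makes it immediate.

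For $z\in I_j$, the Lipschitz property gives $|\tilde h(z)-h_t^*(z)|=|h_t^*(m_j)-h_t^*(z)|\le|m_j-z|\le 1/(2q)$. Hence $\|\tilde h-h_t^*\|_\infty\le 1/(2q)$, and so
\[
 \|\tilde h-h_t^*\|_{L_2(P_t)}^2\le \|\tilde h-h_t^*\|_\infty^2\le \tfrac{1}{4q^2}\le q^{-2}.
\]
This is in fact slightly sharper than claimed; using the left endpoint of each bin instead of the midpoint would give exactly $q^{-2}$.

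There is no real obstacle in this argument. The only subtlety is making sure the comparison is with the constrained class oracle: this is handled by feasibility of $\tilde h$ together with the definition of $\bar h_{t,q}$ as the minimizer of $R_t$ over $\Hcal_q$.
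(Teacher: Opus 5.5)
Your proof is correct and follows the paper's argument: use the square-loss identity to reduce to the approximation error of a single feasible step function with coefficients in $[-B,B]$, then bound its sup-norm error via the Lipschitz property. The only difference is that you evaluate $h_t^*$ at bin midpoints instead of left endpoints, which improves the constant to $1/(4q^2)$ and is otherwise immaterial.
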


\begin{proof}
The conditional mean-zero property of the noise gives
\[
 R_t(h)-R_t(h_t^*)=\|h-h_t^*\|_{L_2(P_t)}^2,
\]
so that
\[
 R_t(\bar h_{t,q})-R_t(h_t^*)
 =\inf_{h\in\Hcal_q}\|h-h_t^*\|_{L_2(P_t)}^2
\]
and, consequently,
\[
 R_t(\bar h_{w,q})-R_t(h_t^*)
 =R_t(\bar h_{w,q})-R_t(\bar h_{t,q}) + \inf_{h\in\Hcal_q}\|h-h_t^*\|_{L_2(P_t)}^2.
\]
For the claim, it therefore suffices to bound the approximation error of a single element of $\Hcal_q^{\mathrm{hist}}$. For the histogram basis, approximate
\(h_t^*\) on each interval by its value at the left endpoint.
The resulting coefficients lie in \([-B,B]\), so this step
function belongs to \(\Hcal_q^{\mathrm{hist}}\). The uniform error
is at most \(q^{-1}\), and hence the squared \(L_2(P_t)\) error is at most
\(q^{-2}\).
\end{proof}

\begin{proposition}\label{prop:basis-estimation}
Suppose the conditions of Theorem~\ref{thm:cr-fast-full} and (S) hold,
\(\sup_z\|\Phi_q(z)\|_2\le1\), and
\(\Theta_q\subseteq B\mathbb B_2^q\). Assume the weight-class and size conditions of
Proposition~\ref{prop:linear-estimation}, with \(p\) replaced by \(q\).
Then, with probability at least \(1-2\delta\), for every \(w\in\Wcal\),
\[
 R^w(\hat h_{w,q})-R^w(\bar h_{w,q})
 \lesssim q\|w\|_2^2\log^2n\,\log^2(1/\delta).
\]
\end{proposition}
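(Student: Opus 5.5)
The plan is to reduce the statement to Proposition~\ref{prop:linear-estimation}, applied with the feature vector $Z$ replaced by $\Phi_q(Z)$. Concretely, set $\tilde Z_i=\Phi_q(Z_i)\in\mathbb R^q$. Then $\Hcal_q=\{z\mapsto\beta^\top\Phi_q(z):\beta\in\Theta_q\}$ is exactly a linear class in the transformed features, with $\sup_z\|\Phi_q(z)\|_2\le1$ and $\Theta_q\subseteq B\mathbb B_2^q$. The estimator $\hat h_{w,q}$ and the oracle $\bar h_{w,q}$ are the weighted ERM and the weighted in-class oracle for this linear problem. The mixing coefficients of $(Y_i,\tilde Z_i)$ are bounded by those of $(Y_i,Z_i)$, since $\tilde Z_i$ is a measurable function of $Z_i$, so the sigma-fields only shrink. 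Hence (S) transfers to the transformed process.

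The remaining conditions of Theorem~\ref{thm:cr-fast-full} are assumed directly in the statement for the class $\Hcal_q$. The bounded and Lipschitz normalization in \eqref{cond:cr-simplify} holds as in Appendix~\ref{app:linear-estimation}: the loss is bounded by $4B^2$ and is $4B$-Lipschitz, since $|Y|\le B$ and $|\beta^\top\Phi_q(z)|\le B$.

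The only step requiring a check is the covering-number condition \eqref{cond:cr-rate-complexities} with $D\asymp q$ and $\xi=0$. For $\beta,\beta'\in\Theta_q$, Cauchy--Schwarz gives $\|h_\beta-h_{\beta'}\|_\infty\le\sup_z\|\Phi_q(z)\|_2\,\|\beta-\beta'\|_2\le\|\beta-\beta'\|_2$. Now take an $\epsilon$-cover of $B\mathbb B_2^q$ from Lemma~\ref{lem:norm-ball-covering}. Because $\Theta_q$ is a subset of this ball, the cover yields a $2\epsilon$-cover of $\Theta_q$ by points of $\Theta_q$ (the usual subset argument), which is harmless inside the logarithm. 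This gives $\log N_\infty(\epsilon,\Hcal_q)\le q\log(1+4B/\epsilon)\lesssim q\log(n/\epsilon)$. Together with the assumed bound $\log N_1(\epsilon,\Wcal)\lesssim\log(n/\epsilon)$, this verifies \eqref{cond:cr-rate-complexities}.

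The side conditions $C_\Wcal\ge n^{-1/2}$ and $D\log n\asymp q\log n\le n$ are the assumed size conditions with $p$ replaced by $q$. Proposition~\ref{prop:cr-rate-simplify} then yields the rate $r(u)^2\asymp q u^2\log^2 n$. Consequently, with probability at least $1-2\delta$, simultaneously for all $w\in\Wcal$,
\[
R^w(\hat h_{w,q})-R^w(\bar h_{w,q})\lesssim q\|w\|_2^2\log^2n\,\log^2(1/\delta).
\]

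I expect no real obstacle. The only points needing care are that the implicit constants do not depend on $q$, since they depend only on $B$ and the constants in (S), and the cover-inside-the-subset detail. Both are routine.
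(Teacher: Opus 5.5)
Your proposal is correct and follows the paper's proof. Like the paper, you use the bound $\|h_\beta-h_{\beta'}\|_\infty\le\|\beta-\beta'\|_2$ and the Euclidean-ball covering lemma to verify \eqref{cond:cr-rate-complexities} with $D\asymp q$ and $\xi=0$, and then apply Proposition~\ref{prop:cr-rate-simplify} to get the stated rate. Your extra remarks are harmless refinements that the paper leaves implicit: the mixing transfer to $(Y_i,\Phi_q(Z_i))$ is not needed, since the losses are already functions of the original $X_i$, and the internal $2\epsilon$-cover of $\Theta_q$ changes only constants.
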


\begin{proof}
For \(h_\beta,h_{\beta'}\in\Hcal_q\), \(\|h_\beta-h_{\beta'}\|_\infty\le\|\beta-\beta'\|_2\), so Lemma~\ref{lem:norm-ball-covering} with the Euclidean norm, \(r=B\), and \(d=q\) gives \(\log N_\infty(\epsilon,\Hcal_q)\le q\log(1+2B/\epsilon)\).
Consequently, \eqref{cond:cr-rate-complexities} holds with
\(D\asymp q\) and \(\xi=0\), and Proposition~\ref{prop:cr-rate-simplify} proves the result.
\end{proof}

For the equal-width \(q\)-bin histogram basis, take
\(\Theta_q^{\mathrm{hist}}=[-B,B]^q\) and let \(\Hcal_q^{\mathrm{hist}}\)
be the corresponding class. Since
\(\|h_\beta-h_{\beta'}\|_\infty=\|\beta-\beta'\|_\infty\) and \([-B,B]^q\) is the sup-norm ball of radius \(B\), Lemma~\ref{lem:norm-ball-covering} gives
\[
 \log N_\infty(\epsilon,\Hcal_q^{\mathrm{hist}})
 \le q\log(1+2B/\epsilon).
\]
Thus \eqref{cond:cr-rate-complexities} holds with \(D\asymp q\) and \(\xi=0\), and Proposition~\ref{prop:cr-rate-simplify} gives the bound of Proposition~\ref{prop:basis-estimation} for \(\Hcal_q^{\mathrm{hist}}\). This argument does not use the Euclidean-radius assumption of
Proposition~\ref{prop:basis-estimation}. Define
\[
 L_{G,q}=\max_{i<t}|t-i|^{-\nu}\|G_{i,q}-G_{t,q}\|_{\mathrm{op}},
 \qquad
 L_{b,q}=\max_{i<t}|t-i|^{-\nu}\|b_{i,q}-b_{t,q}\|_2,
\]
and set $D_q=(B_qL_{G,q}+L_{b,q})^2/\gamma_q$.
For \(\Theta_q^{\mathrm{hist}}=[-B,B]^q\),
\(B_q\le B\sqrt q\); this dependence is absorbed into \(D_q\).
Also write $A_t(q)=R_t(\bar h_{t,q})-R_t(h_t^*)$.
We derive the excess-risk bound of Section~\ref{sec:nonparametric-extension} under exponential weights,
\begin{equation}\label{eq:nonparametric-smooth-excess-risk}
  R_t(\hat h_{w(\alpha),q})-R_t(h_t^*)
 \lesssim \frac{q}{s_\alpha}+D_qs_\alpha^{2\nu}+A_t(q).
\end{equation}
\begin{proof}[Derivation of \eqref{eq:nonparametric-smooth-excess-risk}]
Fix $q$ and suppose the class oracles are interior. Since the target oracle is interior, its normal equation implies orthogonality of $h_t^*-\bar h_{t,q}$ to the feature span. Consequently,
\[
 R_t(\hat h_{w,q})-R_t(h_t^*)
 =\|\hat h_{w,q}-\bar h_{t,q}\|_{L_2(P_t)}^2+A_t(q)
 \le 2\|\hat h_{w,q}-\bar h_{w,q}\|_{L_2(P_t)}^2
 +2\|\bar h_{w,q}-\bar h_{t,q}\|_{L_2(P_t)}^2+A_t(q).
\]
By \eqref{cond:cr-loss-upper} and \eqref{cond:cr-bernstein},
\[
 \|\hat h_{w,q}-\bar h_{w,q}\|_{L_2(P_t)}^2
 \le C_\Pcal^2\sum_{i<t}w_i\|\hat h_{w,q}-\bar h_{w,q}\|_{L_2(P_i)}^2
 \le C_\Pcal^2\{R^w(\hat h_{w,q})-R^w(\bar h_{w,q})\}.
\]
By the definitions of $L_{G,q}$ and $L_{b,q}$, $\|\beta_{w,q}\|_2\le B_q$, and the triangle inequality as in the proof of Proposition~\ref{prop:smooth-local},
\[
 \|v_{w,q}\|_2
 \le (B_qL_{G,q}+L_{b,q})M_w(\psi;t),
\]
and Proposition~\ref{thm:basis-drift-optimality} therefore bounds the drift by $(B_qL_{G,q}+L_{b,q})^2M_w(\psi;t)^2/\gamma_q$. Applying Proposition~\ref{prop:basis-estimation}, or the direct histogram bound above for \(\Hcal_q^{\mathrm{hist}}\), to the estimation term yields the full bound
\begin{align*}
 R_t(\hat h_{w,q})-R_t(h_t^*)
 \lesssim q\|w\|_2^2\log^2n\,\log^2(1/\delta) +\frac{(B_qL_{G,q}+L_{b,q})^2}{\gamma_q}M_w(\psi;t)^2+A_t(q).
\end{align*}
Taking $w=w(\alpha)$, using the weight and drift scalings in
Section~\ref{sec:excess-risk-linear}, gives \eqref{eq:nonparametric-smooth-excess-risk}.
\end{proof}

For the one-dimensional Lipschitz histogram class,
Proposition~\ref{prop:basis-approximation} gives $A_t(q)\le q^{-2}$.
In the illustrative regime $D_q\asymp \bar D$, minimizing the three leading terms yields
\[
 s_\alpha^*\asymp \bar D^{-3/(6\nu+2)},\qquad
 q^*\asymp \bar D^{-1/(6\nu+2)},
 \qquad
 R_t(\hat h_{w(\alpha^*),q^*})-R_t(h_t^*)\lesssim \bar D^{1/(3\nu+1)}.
\]

\section{Simulation}
\label{app:simulation-setup}

The Gaussian examples below illustrate the deterministic drift analysis and exact risk trade-offs.

\subsection{Scalar Examples}

The illustration in Figure~\ref{fig:drift-error-illustrations} uses square loss
and the scalar hypothesis class
\[
    \Hcal=\{h_\beta:\ |\beta|\le B\}, \qquad B=4,
\]
where \(h_\beta\) is a constant predictor, and we write \(t=0,\ldots,T\) with \(T=200\).

\paragraph{Noise Drift}
The first setting is a mean-stationary process with $\beta_i^*=0$ for all $i=0,\ldots,T$ and time-varying noise
variance,
\[
    Y_t=\eta_t, \qquad
    \eta_t\sim N(0,\sigma_t^2), \qquad
    \sigma_t^2=0.12+2.6\left(\frac{t}{T}\right)^2 .
\]
Thus the target oracle is \(\beta_t^*=0\) at every time point.
We use exponential smoothing weights
\[
    w_i^{(t)}
    =
    \frac{\exp\{-\alpha(t-i)\}}
    {\sum_{j=0}^{t-1}\exp\{-\alpha(t-j)\}},
    \qquad i=0,\ldots,t-1,
\]
with \(\alpha=0.045\).

\paragraph{Smooth Drift}
The second setting is a scalar smoothly drifting regression function,
\[
    Y_t=\beta_t^*+\eta_t, \qquad
    \eta_t\sim N(0,\sigma^2), \qquad
    \sigma^2=0.4,
\]
where
\[
    \beta_t^*
    =
    -0.85+1.65\frac{t}{T}
    +0.20\sin\left(4\pi\frac{t}{T}\right).
\]
For each \(t=1,\ldots,T\), the plotted smooth-drift terms use
exponential smoothing weights on the observations before \(t\),
\[
    w_i^{(t)}
    =
    \frac{\exp\{-\alpha(t-i)\}}
    {\sum_{j=0}^{t-1}\exp\{-\alpha(t-j)\}},
    \qquad i=0,\ldots,t-1,
\]
with \(\alpha=0.012\). The weighted oracle is
\(\beta_w^{(t)}=\sum_{i=0}^{t-1}w_i^{(t)}\beta_i^*\), and the
oracle drift at time \(t\) is
\[
    R_t(\bar h_w)-R_t(\bar h_t)
    =
    (\beta_w^{(t)}-\beta_t^*)^2 .
\]
Writing \(a_t=\E[Y_t^2]=(\beta_t^*)^2+\sigma^2\) and
\(b_t=\E[Y_t]=\beta_t^*\), the discrepancy is
\[
    \operatorname{dis}(P_w^{(t)},P_t)
    =
    \sup_{|\beta|\le B}
    \left|
        (a_w^{(t)}-a_t)-2(b_w^{(t)}-b_t)\beta
    \right|,
\]
where \(a_w^{(t)}=\sum_{i=0}^{t-1} w_i^{(t)}a_i\) and
\(b_w^{(t)}=\sum_{i=0}^{t-1} w_i^{(t)}b_i\). The relevant temporal bias is
\[
    M_w(\psi;t)=\sum_{i=0}^{t-1}w_i^{(t)}(t-i),
\]
corresponding to \(\psi_i(t)=t-i\).

The last panel of Figure~\ref{fig:drift-error-illustrations} plots the exact
discrepancy and oracle drift as functions of
\(t\).

For Figure~\ref{fig:smooth-bias-variance-tradeoff}, we fix the final target
time \(t=T\) and vary the exponential-weight parameter \(\alpha\).
For the weighted estimator
\(\hat\beta_w^{(T)}=\sum_{i=0}^{T-1}w_i^{(T)}Y_i\), the oracle drift error
and estimation error are
\[
    D_{\mathrm{od}}(w)
    =
    \left(\beta_w^{(T)}-\beta_T^*\right)^2,
    \qquad
    E_{\mathrm{est}}(w)
    =
    \E\left[(\hat\beta_w^{(T)}-\beta_w^{(T)})^2\right]
    =
    \sigma^2\sum_{i=0}^{T-1}\left(w_i^{(T)}\right)^2,
\]
and
\[
    \E\left[(\hat\beta_w^{(T)}-\beta_T^*)^2\right]
    =
    D_{\mathrm{od}}(w)+E_{\mathrm{est}}(w).
\]

\subsection{Basis Expansions}
Let $Z_t\sim\mathrm{Unif}[0,1)$ be stationary and
\[
 Y_t=h_t^*(Z_t)+\eta_t,\qquad \eta_t\sim N(0,0.4),\qquad h_t^*(z)=0.8z+\sin\bigl(1.5\pi(z+0.8\,t/T)\bigr),
\]
with $T=200$. We predict at $t=T$ from the observations $0,\ldots,T-1$ with exponential weights, $\alpha=0.03$, so that $n_{\mathrm{eff}}\approx66$. For each $q$, $\Hcal_q$ is the equal-width $q$-bin histogram class with a coefficient bound $B\ge2$, so all class oracles are binwise averages. Because the feature distribution is stationary, $\bar h_{w,q}$ and $\bar h_{t,q}$ are the binwise averages of $\sum_iw_ih_i^*$ and $h_T^*$, and the oracle drift and approximation error are computed exactly by quadrature. The estimation error is the first-order variance of the binwise weighted means,
\[
 \sum_{j=1}^q\frac1q\,
 \frac{\sum_{i<T}w_i^2\operatorname{Var}\bigl(\mathbbm1\{Z_i\in I_j\}(Y_i-\beta_{w,q,j})\bigr)}{q^{-2}},
\]
and the plotted excess risk is the sum of the three terms.

\end{document}